\documentclass[12pt,reqno]{amsart}
\usepackage{amscd,amsmath,amsthm,amssymb}
\usepackage{color}
\usepackage{tikz}
\usepackage{url}
\usepackage{circuitikz}
\usepackage{float}

\usepackage{lipsum}

\newtheorem{Theorem}{Theorem}[section]
\newtheorem{Lemma}[Theorem]{Lemma}
\newtheorem{Corollary}[Theorem]{Corollary}
\newtheorem{Proposition}[Theorem]{Proposition}
\newtheorem{Remark}[Theorem]{Remark}

\newtheorem{Example}[Theorem]{Example}
\newtheorem{Examples}[Theorem]{Examples}
\newtheorem{Definition}[Theorem]{Definition}
\newtheorem{Question}[Theorem]{Question}

\newtheorem{Conjecture}{Conjecture}
\newtheorem{Thm}[Conjecture]{Theorem}

\def\qed{\ifhmode\textqed\fi
	\ifmmode\ifinner\quad\qedsymbol\else\dispqed\fi\fi}
\def\textqed{\unskip\nobreak\penalty50
	\hskip2em\hbox{}\nobreak\hfill\qedsymbol
	\parfillskip=0pt \finalhyphendemerits=0}
\def\dispqed{\rlap{\qquad\qedsymbol}}

\def\ZZ{\mathbb{Z}}
\def\NN{\mathbb{N}}
\def\QQ{\mathbb{Q}}
\def\m{\mathfrak{m}}
\def\n{\mathfrak{n}}
\def\Hom{\operatorname{Hom}}

\def\Ass{\operatorname{Ass}}
\def\Att{\operatorname{Att}}

\def\Ker{\operatorname{Ker}}

\def\depth{\operatorname{depth}}

\def\Im{\operatorname{Im}}

\def\supp{\operatorname{supp}}

\def\v{\operatorname{v}}

\def\ini{\operatorname{in}}
\def\Spec{\operatorname{Spec}}
\def\Proj{\operatorname{Proj}}

\def\Ext{\operatorname{Ext}}

\def\H{\operatorname{H}}
\def\E{\operatorname{E}}
\def\D{\operatorname{D}}
\def\Hilb{\operatorname{Hilb}}

\def\Supp{\operatorname{Supp}}

\def\CoKer{\operatorname{CoKer}}
\def\Tor{\operatorname{Tor}}

\begin{document}
	
	\title{Serre depth and local cohomology}
	\author{Antonino Ficarra}
	
	\address{Antonino Ficarra, BCAM -- Basque Center for Applied Mathematics, Mazarredo 14, 48009 Bilbao, Basque Country -- Spain, Ikerbasque, Basque Foundation for Science, Plaza Euskadi 5, 48009 Bilbao, Basque Country -- Spain}
	\email{aficarra@bcamath.org,\,\,\,\,\,\,\,\,\,\,\,\,\,antficarra@unime.it}
	
	\subjclass[2020]{Primary 13D02, 13C05, 13A02; Secondary 05E40}
	\keywords{Serre depth, Serre's conditions, local cohomology, powers of an ideal}
	%\thanks{}
	\vspace*{-0.4em}
	\begin{abstract}
		We introduce a fundamental homological invariant, called \emph{Serre depth}, which stratifies Serre's conditions in the same way that depth stratifies the Cohen-Macaulay property. We study the Serre depths of modules over arbitrary Noetherian local rings and over standard graded algebras over a field, extending the polynomial ring case due to Muta and Terai. Under mild hypotheses, we show that the $r$-th Serre depth of a finitely generated module $M$ measures the deviation of $M$ from satisfying Serre's condition $(S_r)$. The main results of the paper can be summarized as follows:
		\begin{enumerate}
			\item[(i)] We establish the basic properties of Serre depth and prove that it is invariant under completion.
			\item[(ii)] If the base ring $R$ is a homomorphic image of a Gorenstein ring, we show that a finitely generated $R$-module $M$ is equidimensional and satisfies $(S_r)$ if and only if its $r$-th Serre depth equals its Krull dimension. Analogous statements are obtained for schemes.
			\item[(iii)] For a homogeneous ideal in a standard graded polynomial ring over a field, we compare its Serre depths with those of its initial ideal.
			\item[(iv)] We characterize the Serre depths of a monomial ideal in terms of its skeletons and prove that the Serre depths of sufficiently large powers of a monomial ideal stabilize; the proof uses Presburger arithmetic.
		\end{enumerate}
	\end{abstract}
	
	\maketitle\vspace*{-1.8em}
	\section*{Introduction}
	
	Among Noetherian local rings, Cohen-Macaulay rings hold a distinguished place, as reflected in current trends in Commutative Algebra. In the words of Melvin Hochster: ``\textit{Life is really worth living in a Cohen-Macaulay ring}".
	
	A Cohen-Macaulay ring is a Noetherian local ring $R$ in which every system of parameters forms a regular sequence. Denote by $\depth(R)$ the depth of $R$. Then $\depth(R)$ is always less than or equal to $\dim(R)$, and equality holds if and only if $R$ is Cohen-Macaulay. Thus, the depth of $R$ measures how far the ring is from being Cohen-Macaulay. Similarly, one can define Cohen-Macaulay modules.
	
	There is a natural refinement of the Cohen-Macaulay property. We say that a $R$-module satisfies \textit{Serre's condition} $(S_r)$ if
	$$
	\depth_{R_P}(M_P)\ \ge\ \min\{r,\,\dim_{R_P}(M_P)\},
	$$
	for all primes $P$ belonging to the support $\Supp_R(M)$.
	
	It is easily seen that $M$ satisfies $(S_1)$ if and only if it does not have embedded associated primes. Furthermore, $M$ is Cohen-Macaulay if and only if it satisfies $(S_r)$ for all $r\ge1$. Serre's conditions $(S_r)$ stratify Noetherian local rings, provide a flexible generalization of the Cohen-Macaulay property, and play a crucial role in the study of singularities and homological behaviour of finitely generated modules.
	
	Recently, inspired by results of Schenzel (see \cite{Sc0,Sc1,Sc2}), Muta and Terai introduced in \cite{MT} (see also \cite{KMT,PPTYa,PPTYb}) the notion of Serre depth of a finitely generated graded module $M$ in the polynomial ring setting as a measure of the deviation of $M$ from satisfying Serre's condition $(S_r)$. The authors extensively studied the Serre depth of squarefree monomial ideals and their symbolic powers.
	
	In this paper, our goal is to extend the notion of Serre depth in the setting of arbitrary Noetherian local rings or standard graded algebras over a field.
	
	Let $(R,\m)$ be a Noetherian local ring with residue class field $R/\m\cong K$. Let $r\ge1$ be a positive integer, and let $M$ be a $R$-module. Let $\E_R(K)$ be an injective hull of $K$, and for a $R$-module $N$ we denote by $\D_R(N)=\Hom_R(N,\E_R(K))$ the Matlis dual of $N$. Then, the \textit{$r$-th Serre depth} of $M$ is defined as
	$$
	S_r\text{-}\!\depth_R(M)\ =\ \min\{j:\ \dim\D_R(\H_{\m}^j(M))\ge j-r+1\}
	$$
	if $M\ne0$; otherwise we set $S_r\text{-}\!\depth_R(0)=-\infty$. Similar definitions can be made in the graded setting. In Proposition \ref{Prop:inequalities} we prove that:
	\begin{enumerate}
		\item[\textup{(a)}] $\dim(M)\ge S_1\text{-}\!\depth(M)\ge\dots\ge S_{r-1}\text{-}\!\depth(M)\ge S_r\text{-}\!\depth(M)\ge\cdots$.\smallskip
		\item[\textup{(b)}] If $M$ is finitely generated, then $S_r\text{-}\!\depth(M)=\depth(M)$, for all positive integers $r\ge\dim(M)$.
	\end{enumerate}
    
    Statement (b) indicates that the Serre depth plays a role analogous to the usual depth. By (a), $S_r\text{-}\!\depth(M)\le\dim(M)$. In Theorem \ref{Thm:general-implication}, we prove that if equality holds, then $M$ is unmixed, hence equidimensional, and satisfies $(S_r)$.
    
    Our main result is:
	
	\begin{Thm}\label{ThmA}
		Let $(R,\m)$ be a Noetherian local ring or a standard graded $K$-algebra. Let $M$ be a finitely generated $R$-module, which we assume is homogeneous if $R$ is a $K$-algebra. Consider the following statements.
		\begin{enumerate}
			\item[\textup{(a)}] $M$ is equidimensional and satisfies Serre's condition $(S_r)$.
			\item[\textup{(b)}] $S_r\text{-}\!\depth(M)=\dim(M)$.
		\end{enumerate}\smallskip
		Then \textup{(b)} $\Rightarrow$ \textup{(a)}, and the converse implication holds if $R$ is a homomorphic image of a Gorenstein ring. Furthermore, in \textup{(a)} the condition that $M$ is equidimensional can be dropped if $r\ge2$, $R$ is a homomorphic image of a Gorenstein ring, and the module $M$ is either indecomposable or a quotient of $R$.
	\end{Thm}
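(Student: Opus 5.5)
We argue separately for (b) $\Rightarrow$ (a), for its converse, and for the final refinement, with local duality as the main tool. Write $d=\dim(M)$. First we reduce to the complete case. By Proposition \ref{Prop:inequalities}, and since Serre depth is invariant under completion, we may pass to $\widehat R$. Here $\H^j_{\m}(M)\otimes\widehat R=\H^j_{\widehat\m}(\widehat M)$ and Matlis duals agree. Equidimensionality and $(S_r)$ for $M$ follow from the corresponding properties of $\widehat M$, because the fibres of $R\to\widehat R$ are $0$-dimensional over the closed point and flat local maps descend depth and dimension. The graded case is handled analogously, by localizing at the irrelevant ideal and noting that for homogeneous modules these conditions may be checked at homogeneous primes. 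So we may assume $R$ is complete. By Cohen's structure theorem it is then a quotient of a regular, hence Gorenstein, local ring $S$ of dimension $n$. Local duality gives
$$\D_R(\H^j_{\m}(M))\cong \widehat{\Ext^{n-j}_S(M,S)}=:K^j(M),$$
the $j$-th deficiency module.

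\textbf{(b) $\Rightarrow$ (a).} The hypothesis $S_r\text{-}\!\depth(M)=d$ says $\dim K^j(M)\le j-r$ for all $j<d$. We use the standard facts of Schenzel. For $P\in\Supp(M)$ with $\dim R/P=t$ and $\dim M_P=h$, we have
$$P\in\Supp K^j(M)\iff \H^{j-t}_{PR_P}(M_P)\neq0,$$
using $\Ext^{n-j}_S(M,S)_P\cong \Ext^{n-j}_{S_P}(M_P,S_P)$ and local duality over $S_P$. Also $\Ass M\ni P$ with $\dim R/P=j$ forces $\dim K^j(M)=j$.

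Unmixedness follows from the second fact. An associated prime of dimension $j<d$ would give $\dim K^j=j>j-r$, contradicting the hypothesis. For $(S_r)$, suppose $\depth M_P<\min\{r,h\}$ and set $i=\depth M_P$. Then $P\in\Supp K^{i+t}(M)$, so $\dim K^{i+t}\ge t$. Since $M$ is equidimensional and $R$ is catenary, $h+t=d$, so $i+t<d$. The hypothesis then gives $t\le i+t-r$, that is $i\ge r$, a contradiction.

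This also yields Theorem \ref{Thm:general-implication}. Some care is needed there: the implication should hold without the Gorenstein assumption, and after completion it does.

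\textbf{Converse.} Assume $R$ is a quotient of a Gorenstein ring $S$, so that $R$ is catenary and localization of $\Ext_S(M,S)$ works without completing. Assume also that $M$ is equidimensional and satisfies $(S_r)$. Take $j<d$ and $P\in\Supp K^j(M)$ with $\dim R/P=t$. Then $\H^{j-t}_{PR_P}(M_P)\ne0$, so $\depth M_P\le j-t<d-t=h$, where we use the equidimensionality and catenarity of $M$ to get $h=d-t$. Condition $(S_r)$ now forces $\depth M_P\ge r$, hence $j-t\ge r$ and $t\le j-r$. Therefore $\dim K^j\le j-r$ for all $j<d$, which is (b).

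\textbf{Refinement for $r\ge2$.} The only place equidimensionality was used is the identity $\dim M_P+\dim R/P=d$. For $r\ge2$ and $R$ a quotient of a Gorenstein ring, $(S_2)$ implies that $\Supp M$ is connected in codimension one on each connected component. It follows that all minimal primes of a connected component have the same dimension, by the Hartshorne-type connectedness argument for $(S_2)$ modules over catenary rings. If $M=R/I$, then $\Supp M$ is $\Spec(R/I)$, which is local and hence connected. If $M$ is indecomposable, we expect to argue that a decomposition of $\Supp M$ into pieces of different dimensions meeting only in codimension $\ge2$ would, using $(S_2)$ and $\H^0,\H^1$ vanishing at such intersections, split $M$ as a direct sum. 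This step is the main obstacle, and we would first try it via the $S_2$-ification $\Hom_S(K^d(M),K^d(M))$, which in this setting equals $\operatorname{End}(M)$ and has idempotents detecting components of different dimensions.
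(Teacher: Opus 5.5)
The core of your proof is correct and follows the paper closely. This covers (b)$\Rightarrow$(a), and also its converse under the Gorenstein-quotient hypothesis. Your support criterion is Lemma~\ref{Lem:serre2}, the unmixedness step is Lemma~\ref{Lem:serre3}, the $(S_r)$ estimates are those of Theorem~\ref{Thm:Serre}, and the passage through $\widehat R$ is Theorem~\ref{Thm:general-implication}.

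Two small points on this part:
\begin{itemize}
\item To descend $(S_r)$ at a non-maximal $P$, you must take $Q$ minimal over $P\widehat R$. Then $R_P\to\widehat R_Q$ is flat local with zero-dimensional closed fibre; the closed fibre of $R\to\widehat R$ alone does not suffice.
\item In the graded case you should not localize at the irrelevant ideal, since that would require comparing graded and local Serre depths. Graded local duality over the polynomial ring lets your argument run verbatim.
\end{itemize}

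The genuine gap is in the refinement for $r\ge2$.

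First, your claim that $(S_2)$ makes $\Supp M$ connected in codimension one on each connected component is false for modules. The paper's example $M=R/(u,v)\oplus R/(w,x,y)$ over $K[[u,v,w,x,y]]$ satisfies $(S_2)$ and has connected support, but its two components meet only at the closed point. For rings the claim is Hartshorne's connectedness theorem (the paper instead cites \cite[Corollary 5.10.9]{EGAIV65}), so your case $M=R/I$ survives.

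Second, for indecomposable $M$ you give no argument. The $S_2$-ification route is circular: $\Hom(K^d(M),K^d(M))$ only sees the $d$-dimensional components of $M$, so identifying it with $\operatorname{End}(M)$ already presupposes equidimensionality. The paper closes this case by citing Marley--Vassilev \cite[Lemma 2.8]{MV}.

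Your splitting idea can, however, be completed directly.
\begin{itemize}
\item Let $\mathfrak a$ (resp.\ $\mathfrak b$) be the intersection of the minimal primes of $M$ of dimension $d$ (resp.\ $<d$).
\item Any $Q\in\Supp M\cap V(\mathfrak a+\mathfrak b)$ contains a minimal prime $P_1$ of dimension $d$ and a minimal prime $P_2$ of dimension $<d$.
\item If $\dim M_Q\le1$, then $\operatorname{ht}(Q/P_1)=\operatorname{ht}(Q/P_2)=1$. Catenarity of the domains $R/P_i$ then gives $\dim R/P_1=1+\dim R/Q=\dim R/P_2$, a contradiction.
\item Hence $\dim M_Q\ge2$ for all such $Q$, and $(S_2)$ gives $\depth M_Q\ge2$. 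Therefore $\H^0_{\mathfrak a+\mathfrak b}(M)=\H^1_{\mathfrak a+\mathfrak b}(M)=0$.
\item Since $\mathfrak a\cap\mathfrak b=\sqrt{\operatorname{ann}M}$, we have $\Gamma_{\mathfrak a\cap\mathfrak b}(M)=M$. The Mayer--Vietoris sequence then gives $M=\Gamma_{\mathfrak a}(M)\oplus\Gamma_{\mathfrak b}(M)$.
\item Both summands are nonzero, as one sees by localizing at a minimal prime of each family. This contradicts indecomposability.
\end{itemize}
Since $\operatorname{End}_R(R/I)\cong R/I$ has no nontrivial idempotents, $R/I$ is itself indecomposable. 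So this single argument also covers $M=R/I$.
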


    This result highlights the role of Serre depths as a natural substitute of the usual depth, in measuring deviation from Serre's conditions.
    
    Theorem \ref{ThmA} substantially improves several scattered results in the literature. An earlier version of this result appeared in \cite[Lemma (2.1)]{Sc0}, where Schenzel proved that (a) and (b) are equivalent only when $M=R=S/I$ is a homomorphic image of a Gorenstein ring $S$. This version also appears (without proof) in the book of Vasconcelos \cite[Proposition 3.51]{V} under the extra unnecessary assumption that $R$ is equidimensional, and was recently proved explicitly by Dao, Ma and Varbaro in \cite[Proposition 2.11]{DMV}, but only in the polynomial ring case.
    
    Subsequently, Schenzel extended this result to modules (see \cite[Lemma 3.2.1]{Sc1} and \cite[Lemma 1.9]{Sc2}), again under the assumption that $R=S/I$, and the extra unnecessary assumption that $M$ is equidimensional.
    
    We emphasize that implication (b) $\Rightarrow$ (a) was previously unknown for general rings. Unfortunately, in general the converse is false, as we show in Example \ref{Ex:HRW}. On the other hand, the class of rings that are homomorphic images of a Gorenstein ring is quite large. It includes all complete Noetherian local rings (by Cohen's structure theorem \cite[Theorem 29.4(ii)]{Mat}), all Cohen-Macaulay rings having a canonical module (see \cite[Theorem 3.3.6]{BH}) and all standard graded $K$-algebras. Thus, for all these rings, statements (a) and (b) in Theorem \ref{ThmA} are equivalent.\smallskip
    
    We now present a geometric reformulation of Theorem \ref{ThmA}.
    
    Let $X$ be a Noetherian affine local scheme. That is, $X=\Spec(R)$ is the spectrum of a Noetherian local ring $R$. Let $\mathcal{F}$ be a coherent sheaf on $X$. Recall that for each $x\in X$ (which corresponds to a prime ideal $P\in\Spec(R)$), $\mathcal{F}_x$ is a finitely generated module over the Noetherian local ring $\mathcal{O}_{X,x}=R_P$. Let $r\ge1$ be an integer and fix $x\in X$. We define that \textit{$r$-th Serre depth of $\mathcal{F}$ at $x$} as
    $$
    S_r\text{-}\!\depth_x(\mathcal{F})\ =\ S_r\text{-}\!\depth_{\mathcal{O}_{X,x}}(\mathcal{F}_x).
    $$
    Similarly, we put
    $$
    \dim_x(\mathcal{F})\ =\ \dim_{\mathcal{O}_{X,x}}(\mathcal{F}_{x}),\quad\textup{for all}\ x\in X.
    $$
    
    Analogous definitions can be given for a coherent sheaf over a standard graded projective scheme $X$ over a field $K$. In this case $X=\Proj(R)$ is the homogeneous spectrum of a standard graded $K$-algebra $R=\bigoplus_{k\ge0}R_k$. That is, the set of homogeneous prime ideals $P\in\Spec(R)$ which do not contain the homogeneous maximal ideal $\m=R_+=\bigoplus_{k>0}R_k$.
    
    We say that $X$ is \textit{locally a homomorphic image of a Gorenstein ring} if $\mathcal{O}_{X,x}$ is a homomorphic image of a Gorenstein ring for all $x\in X$. For standard graded projective schemes over a field this is trivially true. We say that a coherent sheaf on $\mathcal{F}$ satisfies Serre's condition $(S_r)$ if $\mathcal{F}_x$ satisfies Serre's condition $(S_r)$ for all $x\in X$. Analogously, we say that $\mathcal{F}$ is equidimensional (respectively, indecomposable) if $\mathcal{F}_x$ is an equidimensional (respectively, indecomposable) $\mathcal{O}_{X,x}$-module for all $x\in X$.
    
    \begin{Thm}\label{ThmB}
    	Let $X$ be either a Noetherian affine local scheme or a standard graded projective scheme over a field $K$. Let $\mathcal{F}$ be a coherent sheaf on $X$. Consider the following statements.
    	\begin{enumerate}
    		\item[\textup{(a)}] $\mathcal{F}$ is equidimensional and satisfies Serre's condition $(S_r)$.
    		\item[\textup{(b)}] $S_r\text{-}\!\depth_x(\mathcal{F})=\dim_x(\mathcal{F})$ for all $x\in X$.
    	\end{enumerate}\smallskip
    	Then \textup{(b)} $\Rightarrow$ \textup{(a)}, and the converse implication holds if $X$ is locally a homomorphic image of a Gorenstein ring. Furthermore, in \textup{(a)} the condition that $\mathcal{F}$ is equidimensional can be dropped if $r\ge2$, $X$ is locally a homomorphic image of a Gorenstein ring, and the sheaf $\mathcal{F}$ is either indecomposable or an ideal sheaf.
    \end{Thm}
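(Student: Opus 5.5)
Theorem~\ref{ThmB} will be deduced stalkwise from Theorem~\ref{ThmA}. All the relevant notions are defined pointwise: $S_r\text{-}\!\depth_x(\mathcal{F})=S_r\text{-}\!\depth_{\mathcal{O}_{X,x}}(\mathcal{F}_x)$ and $\dim_x(\mathcal{F})=\dim_{\mathcal{O}_{X,x}}(\mathcal{F}_x)$. The properties ``equidimensional'', ``indecomposable'' and $(S_r)$ for $\mathcal{F}$ are likewise defined by asking them of every stalk $\mathcal{F}_x$. So it suffices to apply Theorem~\ref{ThmA} to the finitely generated module $\mathcal{F}_x$ over the Noetherian local ring $\mathcal{O}_{X,x}$, for each $x\in X$. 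Coherence of $\mathcal{F}$ guarantees that $\mathcal{F}_x$ is finitely generated.

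\textbf{(b) $\Rightarrow$ (a).} Fix $x\in X$. By hypothesis $S_r\text{-}\!\depth_{\mathcal{O}_{X,x}}(\mathcal{F}_x)=\dim_{\mathcal{O}_{X,x}}(\mathcal{F}_x)$. The implication (b) $\Rightarrow$ (a) of Theorem~\ref{ThmA}, which holds for every Noetherian local ring, then gives that $\mathcal{F}_x$ is equidimensional and satisfies $(S_r)$. Since $x$ is arbitrary, $\mathcal{F}$ is equidimensional and satisfies $(S_r)$. In the projective case, $\mathcal{O}_{X,x}$ is the homogeneous localization of $R$ at a relevant prime, which is again a Noetherian local ring, so the local version of Theorem~\ref{ThmA} applies there too. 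No grading issue arises, because the stalk is a local ring rather than a graded one.

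\textbf{(a) $\Rightarrow$ (b).} Assume $X$ is locally a homomorphic image of a Gorenstein ring. Then each $\mathcal{O}_{X,x}$ is such a ring, and the converse part of Theorem~\ref{ThmA} gives the desired equality at every $x$. For standard graded projective schemes this hypothesis holds automatically, as the paper notes. One justification is that the homogeneous localization is a localization of (a degree-zero piece of) a quotient of a polynomial ring, and localizations of Gorenstein rings are Gorenstein.

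\textbf{Dropping equidimensionality.} Suppose $r\ge2$ and $\mathcal{F}$ is indecomposable, so each $\mathcal{F}_x$ is indecomposable by the paper's pointwise definition. Then the last clause of Theorem~\ref{ThmA} applies stalkwise. Suppose instead that $\mathcal{F}$ is an ideal sheaf. This is the one point needing care, since Theorem~\ref{ThmA} is stated for quotients $R/I$, not for ideals $I$. I read ``ideal sheaf'' as meaning the structure sheaf of the closed subscheme it defines, so that $\mathcal{F}_x=\mathcal{O}_{X,x}/\mathcal{I}_x$ is a cyclic quotient of $\mathcal{O}_{X,x}$, and the last clause of Theorem~\ref{ThmA} then applies. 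If the ideal itself is intended, I would need to relate the two via the short exact sequence $0\to\mathcal{I}_x\to\mathcal{O}_{X,x}\to\mathcal{O}_{X,x}/\mathcal{I}_x\to0$. I expect this interpretation to be the main obstacle; the rest of the argument is formal.
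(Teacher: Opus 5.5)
Your stalkwise reduction is exactly how the paper intends Theorem~\ref{ThmB}: it is stated as a geometric reformulation of Theorem~\ref{ThmA}, with the deduction left implicit. Your reading of ``ideal sheaf'' as $\mathcal{O}_X/\mathcal{I}$, so that the stalks $\mathcal{O}_{X,x}/\mathcal{I}_x$ fall under Corollary~\ref{Cor:Serre}, is the correct one. Do not pursue the short-exact-sequence route for $\mathcal{I}$ itself, because that version is false. In $R=K[[x,y,z,u,v]]/\bigl((x,y)\cap(z,u,v)\bigr)$ the ideal $I=(x,z)R\cong K[[x,y]]\oplus K[[z,u,v]]$ satisfies $(S_2)$ but is not equidimensional, so $S_2\text{-}\!\depth(I)=2<3=\dim(I)$.
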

    
    Now, we describe the content and main findings of this paper.\smallskip
    
    In Section \ref{sec1}, we introduce the notion of Serre depth (Definition \ref{Def:SerreDepth}). It turns out that $S_r\text{-}\!\depth(M)=\depth(M)$, whenever $M$ is Cohen-Macaulay, or sequentially Cohen-Macaulay (Examples \ref{Ex:(i)-(vi)}). We investigate the behaviour of the Serre depth under specializations, direct sums, and short exact sequences.
    
    A fundamental property, needed in many proofs, is the invariance of the Serre depth under completion.
    \begin{Thm}\label{ThmC}
    	Let $R$ be local. For any finitely generated $R$-module $M$, we have
    	$$
    	S_r\text{-}\!\depth_R(M)\ =\ S_r\text{-}\!\depth_{\widehat{R}}(\,\widehat{\!M\!}\,),
    	$$
    	for all $r\ge1$.
    \end{Thm}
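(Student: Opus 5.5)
Since $S_r\text{-}\!\depth_R(M)$ depends only on the integers $\dim_R \D_R(\H_\m^j(M))$ for $j\ge0$, the plan is to show that these dimensions do not change when we pass to $\widehat R$ and $\widehat M$. More precisely, I will show that for every $j$,
$$
\dim_R \D_R(\H^j_\m(M))\ =\ \dim_{\widehat R}\D_{\widehat R}(\H^j_{\widehat\m}(\widehat M)).
$$
The case $M=0$ is trivial, since then $\widehat M=0$ and both sides equal $-\infty$. So assume $M\neq0$.

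The first step compares the local cohomology modules themselves. By flat base change, $\H^j_\m(M)\otimes_R\widehat R\cong\H^j_{\m\widehat R}(\widehat M)=\H^j_{\widehat\m}(\widehat M)$. Moreover $\H^j_\m(M)$ is Artinian and $\m$-torsion, so it is already an $\widehat R$-module and $\H^j_\m(M)\otimes_R\widehat R\cong \H^j_\m(M)$. Hence $\H^j_{\widehat\m}(\widehat M)\cong \H^j_\m(M)$ as $\widehat R$-modules, and this isomorphism is compatible with the $R$-structure via $R\to\widehat R$.

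The second step compares the Matlis duals. Since $\E_R(K)=\E_{\widehat R}(K)$ and it is $\m$-torsion, for any $\m$-torsion module $A$ we have $\Hom_R(A,\E)=\Hom_{\widehat R}(A,\E)$. Thus $D:=\D_R(\H^j_\m(M))=\D_{\widehat R}(\H^j_{\widehat\m}(\widehat M))$ is one and the same group. It is a finitely generated $\widehat R$-module because $\H^j_\m(M)$ is Artinian. As an $R$-module, however, it need not be finitely generated.

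The main obstacle is the third step: comparing $\dim_R D$ with $\dim_{\widehat R}D$. I read $\dim_R$ as the dimension of $\Supp_R D$, or equivalently as $\dim R/\operatorname{Ann}_R D$. I would work with annihilators. Since $D$ is an $\widehat R$-module and $R\to\widehat R$ is injective, we have $\operatorname{Ann}_R D=\operatorname{Ann}_{\widehat R}(D)\cap R$. Writing $J=\operatorname{Ann}_{\widehat R}D$, the map $R/(J\cap R)\to\widehat R/J$ is injective. It is also surjective up to completion: the completion of $R/(J\cap R)$ surjects onto $\widehat R/J$. This only yields $\dim \widehat R/J\le \dim R/(J\cap R)$, so the reverse inequality needs more.

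For the reverse inequality I would use Matlis duality. The annihilator of $D$ equals the annihilator of $\H^j_\m(M)$ in both rings. So it suffices to show $\dim R/\operatorname{Ann}_R\H^j_\m(M)=\dim\widehat R/\operatorname{Ann}_{\widehat R}\H^j_{\widehat\m}(\widehat M)$, where the left side is the notion the definition intends. Set $I=\operatorname{Ann}_R\H^j_\m(M)$. Since $\H^j_\m(M)$ is a faithful module over $R/I$ and is Artinian, it is also an Artinian faithful module over the completion $\widehat{R/I}=\widehat R/I\widehat R$. Then $\operatorname{Ann}_{\widehat R}$ contains $I\widehat R$, and its image in $\widehat R/I\widehat R$ is the annihilator of a faithful-over-$R/I$ Artinian module.

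To conclude, I would use a known fact about Artinian modules: if $A$ is Artinian and faithful over a local ring $T$, then the annihilator ideal of $A$ in $\widehat T$ has the same dimension as $T$. One route is to show that $\dim_{\widehat T}\D(A)=\dim T$ via the Noetherian dimension of $A$, using Roberts' result $\operatorname{Ndim}A=\dim\widehat T/\operatorname{Ann}_{\widehat T}A$ together with the inequality $\operatorname{Ndim}A\le\dim T/\operatorname{Ann}_T A$. For local cohomology modules one also has $\dim R/\operatorname{Ann}\H^j_\m(M)=\operatorname{Ndim}\H^j_\m(M)$, for instance by a result of Cuong–Nhan. If that equality is not available in general, I would instead define the dimension via the Noetherian $\widehat R$-module $D$, in which case the statement is immediate from the first two steps.

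Combining the three steps, the sets $\{j:\ \dim\D(\H^j)\ge j-r+1\}$ coincide for $M$ over $R$ and for $\widehat M$ over $\widehat R$. Their minima therefore agree, which gives $S_r\text{-}\!\depth_R(M)=S_r\text{-}\!\depth_{\widehat R}(\widehat M)$.
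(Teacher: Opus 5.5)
Your Step 3 has a genuine gap, and it cannot be closed. The termwise equality $\dim_R D=\dim_{\widehat R}D$ that you aim for is false in general.

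Several ingredients you rely on fail. The ``known fact'' that a faithful Artinian module over $T$ has annihilator of dimension $\dim T$ in $\widehat T$ is false. So is your claim that $\H_\m^j(M)$ stays faithful over $\widehat{R/I}$. The equality $\dim R/\operatorname{Ann}_R\H_\m^j(M)=\operatorname{Ndim}\H_\m^j(M)$ holds for $j=\dim M$, and for all $j$ when $R$ is a homomorphic image of a Gorenstein ring, but not for all $j$ in general.

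Take the ring $R$ of Example~\ref{Ex:HRW}: a $2$-dimensional local domain such that $\widehat R$ has an embedded prime $Q$.
\begin{itemize}
\item Since $\depth\widehat R=\depth R\ge 1$, we get $\dim\widehat R/Q=1$. By flatness, $Q\cap R\in\Ass_R(R)=\{0\}$ \cite[Theorem 23.2]{Mat}.
\item Put $D=\D_R(\H_\m^1(R))\cong\D_{\widehat R}(\H^1_{\widehat{\m}}(\widehat R))$. Lemma~\ref{Lem:serre2}, applied over $\widehat R$, gives $Q\in\Supp_{\widehat R}(D)$ because $\H^0_{Q\widehat R_Q}(\widehat R_Q)\neq 0$. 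The same lemma gives $\dim_{\widehat R}D\le 1$, so $\dim_{\widehat R}D=1$.
\item But $R\setminus\{0\}\subseteq\widehat R\setminus Q$, so $D_Q$ is a localization of $D\otimes_R\operatorname{Frac}(R)$. Hence $(0)\in\Supp_R(D)$ and $\dim_R D=2$.
\end{itemize}
Thus $\H_\m^1(R)$ is a faithful Artinian $R$-module whose annihilator in $\widehat R$ has dimension $1<2$. Your fallback of measuring the dimension over $\widehat R$ changes the definition (the paper uses $\Supp_R$), so it proves a different statement.

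The paper's proof fails at the same point, so do not import its route either. It bridges this step with Lemma~\ref{Lem:changeofrings} plus the claim $D\cong D\otimes_R\widehat R$ for finitely generated $\widehat R$-modules (Remark~\ref{Rem:Mat}). However, \cite[Theorem 8.7]{Mat} requires finite generation over $R$. In the example above, Lemma~\ref{Lem:changeofrings} gives $\dim_{\widehat R}(D\otimes_R\widehat R)=\dim_R D=2\neq 1$, so that isomorphism fails.

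What is correct is your observation that $\dim_{\widehat R}D\le\dim_R D$ comes for free. It yields $S_r\text{-}\!\depth_R(M)\le S_r\text{-}\!\depth_{\widehat R}(\widehat M)$, which is the direction used in Theorem~\ref{Thm:general-implication} and Example~\ref{Ex:HRW}. If $R$ is a homomorphic image of a Gorenstein ring, then (\ref{eq:iso-local-D}) writes $D$ as $E\otimes_R\widehat R$ with $E$ finitely generated over $R$. That gives termwise equality and hence the full statement.

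In general, the reverse inequality would have to be proved on the minima rather than on the individual dimensions, and it depends on the formal fibres. The same computation shows the following. Let $A$ be a local domain with $\widehat A$ unmixed, and let $Q\in\Spec\widehat A$ satisfy $Q\cap A=0$ with $\widehat A_Q$ not Cohen--Macaulay. Then
$S_1\text{-}\!\depth_A(A)\le\dim\widehat A/Q+\depth\widehat A_Q<\dim A=S_1\text{-}\!\depth_{\widehat A}(\widehat A)$,
where the last equality is Theorem~\ref{Thm:Serre}.
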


    The proof of this result is given in Section \ref{sec2}. The most technically difficult part of the paper is the proof of Theorem \ref{ThmA} which is given in Section \ref{sec3}. As an outcome of the arguments given there, in Corollary \ref{Cor:Serre} we prove that if $R$ is a homomorphic image of a Gorenstein ring, and $M$ is a finitely generated $R$-module, then
    $$
    S_r\text{-}\!\depth_{R_P}(M_P)\ \ge\ S_{r+\dim(R/P)}\text{-}\!\depth_R(M)-\dim(R/P),
    $$
    for all primes $P\in\Supp_R(M)$ and $r\ge1$. Moreover, equality holds for a given $P\in\Supp_R(M)$ if $S_{r-\dim(R/P)}\text{-}\!\depth_R(M)=\dim_R(M)$.\smallskip
	
	Unfortunately, the implication (a) $\Rightarrow$ (b) in Theorem \ref{ThmA} is not true for general modules (Example \ref{Ex:HRW}). Heinzer, Rotthaus and Wiegand \cite[Theorem 19.11]{HRW} discovered a $2$-dimensional Noetherian local domain $R$ whose completion does not satisfy $(S_1)$. Note that, $R$ is unmixed and satisfies $(S_1)$. However $S_1\text{-}\!\depth_R(R)<2=\dim(R)$.
	
	The equidimensional condition in statement (a) of Theorem \ref{ThmA} can be dropped if $r\ge2$, $R$ is a homomorphic image of a Gorenstein ring and $M$ is either a quotient ring $R/I$ or an indecomposable $R$-module. See the Corollaries \ref{Cor:Serre} and \ref{Cor:Serre1}. We remark that these results are independent statements.\smallskip
	
	In the last two sections, we focus on homogeneous ideals in a standard graded polynomial ring $S=K[x_1,\dots,x_n]$ over a field $K$. In the following result, we compare the Serre depths of $S/I$ with those of the initial degeneration $S/\ini_<(I)$, giving an analogue of the classical upper semicontinuity valid for the depth.
	
	\begin{Thm}\label{ThmD}
		Let $S=K[x_1,\dots,x_n]$ be the standard graded polynomial ring over a field $K$, let $I\subset S$ be a homogeneous ideal, and let $<$ be a monomial order on $S$. Then
		$$
		S_r\text{-}\!\depth(S/\ini_<(I))\ \le\ S_r\text{-}\!\depth(S/I),
		$$
		for all $r\ge1$. Furthermore, equality holds if $\ini_<(I)$ is squarefree.
	\end{Thm}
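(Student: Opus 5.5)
We translate the definition of Serre depth into a statement about vanishing of graded $\Ext$ modules, where the comparison with the initial ideal is classical. By graded local duality over $S$ (with $\dim S=n$), $\D(\H_{\m}^j(S/I))\cong \Ext_S^{n-j}(S/I,S)(-n)$ up to a shift, which does not affect dimension. Writing $M=S/I$, we get
$$
S_r\text{-}\!\depth(M)=\min\{j:\ \dim\Ext_S^{n-j}(M,S)\ge j-r+1\}.
$$
So it suffices to show the following: for every $j$, if $\dim\Ext_S^{n-j}(S/I,S)\ge j-r+1$, then $\dim\Ext_S^{n-j}(S/\ini_<(I),S)\ge j-r+1$. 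This gives $\{j\text{ for }I\}\subseteq\{j\text{ for }\ini_<(I)\}$, and taking minima yields the inequality. For the equality statement we prove the reverse inclusion when $\ini_<(I)$ is squarefree.

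For the inequality we use the Gröbner degeneration. Choose a weight vector $\omega$ with $\ini_\omega(I)=\ini_<(I)$, and let $\tilde I\subset S[t]$ be the homogenization, so $S[t]/\tilde I$ is flat over $K[t]$. Its general fibre is $S/I$ and its special fibre ($t=0$) is $S/\ini_<(I)$. Set $E^i=\Ext^i_{S[t]}(S[t]/\tilde I,S[t])$. Since $t$ is a nonzerodivisor on $S[t]/\tilde I$, the short exact sequence for multiplication by $t$ gives injections
$$
E^i/tE^i\hookrightarrow \Ext^i_S(S/\ini_<(I),S),
$$
and $E^i_t\cong \Ext^i_S(S/I,S)\otimes K[t,t^{-1}]$. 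The latter holds because $S[t,t^{-1}]/\tilde I_t\cong (S/I)[t,t^{-1}]$ after the standard change of coordinates. Now $E^i$ is a finitely generated graded $S[t]$-module for a positive grading in which $t$ has positive degree, so graded Nakayama applies. Hence $\dim E^i/tE^i\ge\dim E^i-1$, and $\dim E^i\ge \dim\Ext^i_S(S/I,S)+1$ by looking at $E^i_t$. Combining these gives
$$
\dim\Ext_S^{i}(S/\ini_<(I),S)\ \ge\ \dim\Ext_S^{i}(S/I,S).
$$
This is the required inclusion with $i=n-j$. The main obstacle here is the dimension bookkeeping: we must check that $t$-torsion in $E^i$ cannot drop the dimension of $E^i/tE^i$ below $\dim E^i-1$. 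We handle this by noting that every component of $\Supp E^i$ is a homogeneous prime, so it meets $V(t)$ by graded Nakayama, and for each such component $\dim(E^i/tE^i)_{\mathfrak p}$-type arguments give the drop by at most one.

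For equality when $\ini_<(I)$ is squarefree, we invoke the theorem of Conca and Varbaro. In that case the extremal Betti numbers, and more precisely the Hilbert functions, of $\Ext^i_S(S/I,S)$ and $\Ext^i_S(S/\ini_<(I),S)$ coincide for all $i$. Equal Hilbert functions give equal Hilbert polynomial degrees, hence equal Krull dimensions. So the two index sets defining $S_r\text{-}\!\depth$ coincide, and the minima agree.
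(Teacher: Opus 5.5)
Your proof is correct, but for the inequality it takes a different route from the paper.

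\textbf{The paper's route.} The paper cites Sbarra's degree-by-degree comparison $\dim_K\H_\m^j(S/I)_i\le\dim_K\H_\m^j(S/\ini_<(I))_i$ for all $i,j$. It dualizes this with graded Matlis duality and reads off the Krull dimensions of the duals from the degrees of their Hilbert polynomials.

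\textbf{Your route.} You prove only the qualitative statement that is needed, $\dim\Ext^i_S(S/I,S)\le\dim\Ext^i_S(S/\ini_<(I),S)$, and you prove it directly on the total space of the weight degeneration. The ingredients are:
\begin{enumerate}
\item the injection $E^i/tE^i\hookrightarrow\Ext^i_S(S/\ini_<(I),S)$, which comes from the long exact sequence for $t$ together with Rees' lemma;
\item the identification of $E^i_t$ with a twisted copy of $\Ext^i_S(S/I,S)\otimes_K K[t^{\pm1}]$;
\item a dimension-drop step.
\end{enumerate}

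\textbf{The dimension-drop step.} The ``$t$-torsion'' worry is not really the issue, since $\Supp(E^i/tE^i)=\Supp(E^i)\cap V(t)$ holds for any finitely generated module. What matters is the following. A minimal prime $P$ of $\Supp(E^i)$ of maximal dimension is homogeneous for a positive grading with $\deg t>0$, so $P+(t)$ is proper. Krull's principal ideal theorem in the affine domain $S[t]/P$ then gives a prime containing $P+(t)$ of dimension at least $\dim S[t]/P-1$. This is what your sketch amounts to.

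\textbf{Comparison.} Your argument is self-contained and uses only the basic properties of the Gr\"obner degeneration. It would work verbatim for any flat degeneration carrying such a positive grading. The paper's argument is shorter given the citation, and it yields the finer degree-by-degree inequality. That finer inequality makes the equality case a direct parallel: Conca--Varbaro upgrade Sbarra's inequality to an equality of Hilbert functions of local cohomology when $\ini_<(I)$ is squarefree. From that point you and the paper conclude in the same way. The input you need there is this equality of Hilbert functions, dualized to the $\Ext$ modules; the extremal Betti numbers you mention are only a consequence of it.
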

	
	Theorem \ref{ThmD} then leads to the problem of efficiently computing the Serre depth of a monomial ideal. To this end, we generalize results of Smith \cite{Smith}, Hibi \cite{Hibi}, Herzog-Soleyman Jahan-Zheng \cite{HSZ}, and Muta-Terai \cite{MT}, and prove the following\smallskip
	
	\begin{Thm}\label{ThmE}
		Let $S=K[x_1,\dots,x_n]$ be the standard graded polynomial ring over a field $K$, and let $I\subset S$ be a monomial ideal. Then
		$$
		S_r\text{-}\!\depth(S/I)\ =\ \max\{i:\ S/\Sigma_i^{\bf g}(I)\ \textit{satisfies Serre's condition}\ (S_r)\},
		$$
		for all $r\ge1$ and all ${\bf g}\in\ZZ_{\ge0}^n$ with ${\bf g}\ge{\bf deg}(I)$. Furthermore, $S/\Sigma_{j}^{\bf g}(I)$ satisfies Serre's condition $(S_r)$ for all $j\le S_r\text{-}\!\depth(S/I)$.
	\end{Thm}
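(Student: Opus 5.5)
The plan is to reduce the statement to a comparison of local cohomology between $S/I$ and the quotients $S/\Sigma_i^{\bf g}(I)$, and then to apply Theorem \ref{ThmA}. This is legitimate here because $S$ is Gorenstein, so the equivalence (a) $\Leftrightarrow$ (b) of Theorem \ref{ThmA} holds for every graded $S$-module. The model I would follow is the classical squarefree argument of Hibi and Smith. There one has $\depth K[\Delta]=\max\{i:\ \Delta^{(i)}\ \text{is Cohen--Macaulay}\}+1$, and the key fact is that $\H^j_\m(K[\Delta^{(i)}])\cong \H^j_\m(K[\Delta])$ for $j<i$, where $\dim K[\Delta^{(i)}]=i$ with the same indexing convention as $\Sigma_i^{\bf g}(I)$. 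Monomial ideals require the multigraded refinement via Takayama's formula, which expresses $\H^j_\m(S/I)_{\bf a}$ as the reduced cohomology of a degree complex $\Delta_{\bf a}(I)$. I expect the skeleton $\Sigma_i^{\bf g}(I)$ to be defined so that, for ${\bf g}\ge{\bf deg}(I)$, its degree complexes are the $(i-1)$-dimensional skeletons of those of $I$, up to the shift. In particular $S/\Sigma_i^{\bf g}(I)$ is equidimensional of dimension $i$ whenever $i\le\dim S/I$.

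\textbf{Step 1.} Using Takayama's formula and the fact that ${\bf g}\ge{\bf deg}(I)$, so that all relevant multidegrees are controlled, I would prove two facts. First, $\H^j_\m(S/\Sigma_i^{\bf g}(I))\cong \H^j_\m(S/I)$ as multigraded modules for $j<i$. Second, for $j=i$ there is a natural surjection $\H^i_\m(S/\Sigma_i^{\bf g}(I))\to\H^i_\m(S/I)$, or at least a relation showing that $\dim\D(\H^i_\m(S/\Sigma_i^{\bf g}(I)))=i$, which holds automatically since the module has dimension $i$. Both facts come from the simplicial statement that passing to the $(i-1)$-skeleton preserves reduced cohomology below the top degree.

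\textbf{Step 2.} Put $d_i=\dim S/\Sigma_i^{\bf g}(I)=i$. By definition,
\[
S_r\text{-}\!\depth(S/\Sigma_i^{\bf g}(I))=\min\{j:\ \dim\D(\H^j_\m)\ge j-r+1\},
\]
and the index $j=i$ always qualifies. By Step 1 the indices $j<i$ qualify exactly when they qualify for $S/I$. Hence
\[
S_r\text{-}\!\depth(S/\Sigma_i^{\bf g}(I))=\min\{i,\ S_r\text{-}\!\depth(S/I)\}.
\]
Combining this with Theorem \ref{ThmA} and equidimensionality, $S/\Sigma_i^{\bf g}(I)$ satisfies $(S_r)$ if and only if $i\le S_r\text{-}\!\depth(S/I)$. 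This yields both the formula for the maximum and the final assertion for all $j\le S_r\text{-}\!\depth(S/I)$. I would also handle separately the boundary case $i=\dim S/I$, where $\Sigma_i^{\bf g}(I)$ is the unmixed part of $I$, and the range $i>\dim S/I$, where the skeleton is presumably $I$ itself. The inequality $S_r\text{-}\!\depth(S/I)\le\dim(S/I)$ from Proposition \ref{Prop:inequalities} keeps these cases consistent.

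\textbf{Main obstacle.} I expect the hardest part to be Step 1 in the non-squarefree setting. One must verify that the degree complexes of $\Sigma_i^{\bf g}(I)$ are exactly the skeletons of those of $I$, in all multidegrees ${\bf a}$ including negative coordinates, and check that the Matlis dual dimensions, read off from the supports of the multigraded pieces, agree. If a direct Takayama computation becomes unwieldy, my fallback is polarization. Polarization with respect to ${\bf g}$ should turn $\Sigma_i^{\bf g}(I)$ into the Stanley--Reisner ideal of the $(i-1)$-skeleton of the polarized complex. Polarization by a regular sequence of linear forms should also preserve Serre depths and Serre's conditions, via the specialization results of Section \ref{sec1}. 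This would reduce the statement to the squarefree result of Muta and Terai.
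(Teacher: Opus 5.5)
\paragraph{Where the proof breaks.} Your Step~1 (for $j<i$) and Step~2 are correct. Together they give the sharper identity $S_r\text{-}\!\depth(S/\Sigma_i^{\bf g}(I))=\min\{i,t\}$ for $0\le i\le d=\dim S/I$, where $t=S_r\text{-}\!\depth(S/I)$. By the implication (b)$\Rightarrow$(a) of Theorem~\ref{ThmA}, this already gives the final assertion and shows the maximum is at least $t$.

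The gap is the reverse inequality. For $t<i\le d$, Theorem~\ref{ThmA} only tells you that $S/\Sigma_i^{\bf g}(I)$ is not \emph{equidimensional and} $(S_r)$. You drop the word ``equidimensional'' by asserting that every $S/\Sigma_i^{\bf g}(I)$ with $i\le d$ is equidimensional, and that is false:
\begin{itemize}
\item $\Sigma_d^{\bf g}(I)=I$ itself, not its unmixed part. A monomial ${\bf x^a}$ with ${\bf a}(s)={\bf g}(s)$ for more than $d$ indices always lies in $I$, because ${\bf g}\ge{\bf deg}(I)$.
\item Lower skeletons can fail as well. If $\Delta$ has facets $\{1,2,3\}$ and $\{4\}$, then $\Delta^{(2)}$ is not pure, so $S/\Sigma_2^{\bf 1}(I_\Delta)$ is not equidimensional although $2<3=d$.
\end{itemize}

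For $r\ge2$ the step is rescued by Corollary~\ref{Cor:Serre}: a quotient of $S$ satisfying $(S_2)$ is automatically equidimensional.

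For $r=1$ nothing can rescue it, because the displayed formula is then false. Take $I=(xy,xz)\subset K[x,y,z]$ and ${\bf g}=(1,1,1)$. Then $\Sigma_2^{\bf g}(I)=I$, and $S/I$ is reduced, hence satisfies $(S_1)$, so the maximum is $2$. But $S/I$ is not unmixed, so $S_1\text{-}\!\depth(S/I)<2$ by Lemma~\ref{Lem:serre3}; in fact it equals $1$. For $r=1$ your argument proves the corrected statement, with ``$(S_1)$'' replaced by ``unmixed''. The paper's proof has the same slip in its claim (ii), where Theorem~\ref{Thm:Serre} is applied to $S/I$ assuming only $(S_r)$.

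\paragraph{How your route compares with the paper's.} Apart from this, your route differs from the paper's.
\begin{itemize}
\item The paper inducts on $d$, peeling off one skeleton at a time. It uses the sequence $0\to\Sigma_{d-1}^{\bf g}(I)/I\to S/I\to S/\Sigma_{d-1}^{\bf g}(I)\to0$, the fact \cite[Theorem 1.2]{HSZ} that the kernel is Cohen--Macaulay of dimension $d$, and Proposition~\ref{prop:shortexact}.
\item You compare $S/I$ with every skeleton directly, which yields the closed formula above in one stroke.
\end{itemize}

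The cheapest proof of your Step~1 uses that same input rather than Takayama. Iterating \cite[Theorem 1.2]{HSZ}, the module $\Sigma_i^{\bf g}(I)/I$ is filtered by modules that are zero or Cohen--Macaulay of dimension $>i$. Hence $\H^j_\m(\Sigma_i^{\bf g}(I)/I)=0$ for $j\le i$. This gives module isomorphisms $\H^j_\m(S/I)\cong\H^j_\m(S/\Sigma_i^{\bf g}(I))$ for $j<i$. In degree $i$ it gives an injection $\H^i_\m(S/I)\hookrightarrow\H^i_\m(S/\Sigma_i^{\bf g}(I))$, not a surjection the other way. You do not need that map, since $\dim S/\Sigma_i^{\bf g}(I)=i$ already forces $\dim\D(\H^i_\m(S/\Sigma_i^{\bf g}(I)))=i$.

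Your Takayama plan also works. If ${\bf a}(s)<{\bf g}(s)$ for all $s$, then $\Delta_{\bf a}(\Sigma_i^{\bf g}(I))$ consists exactly of the faces of $\Delta_{\bf a}(I)$ of cardinality at most $i-|G_{\bf a}|$. Otherwise $\H^j_\m(S/I)_{\bf a}$ and $\H^j_\m(S/\Sigma_i^{\bf g}(I))_{\bf a}$ both vanish.

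The polarization fallback would need more than Section~\ref{sec1}: its specialization corollary gives only inequalities, not invariance of Serre depth.
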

	
	For the definitions of ${\bf deg}(I)$ and the skeleton ideals $\Sigma_i^{\bf g}(I)$, see Section \ref{sec4}.\newpage
	
	One of the most striking homological behaviours of the depth was established by Brodmann in 1979 \cite{B79a}. He proved that the function $k\mapsto\depth(R/I^k)$ is constant for all $k\gg0$. See also \cite{Conca,CHT,FS,Kod} for related results. A short proof of this fact was given by Herzog and Hibi \cite[Theorem 1.1]{HH}, who conjectured that the depth function can be as wild as possible: any convergent non-negative numerical function can occur as the depth function $k\mapsto\depth(R/I^k)$ of a homogeneous ideal $I$. This conjecture has been answered affirmatively by H\`a, Nguyen, Trung and Trung \cite{HNTT}.
	
	To prove a similar result for $S_r\text{-}\!\depth(R/I^k)$, a natural approach is to show that
	\begin{equation}\label{eq:dim_R-const}
		\dim_R\D_R(\H_\m^j(R/I^{k+1}))=\dim_R\D_R(\H_\m^j(R/I^{k})),\quad\text{for all}\ j\ge0\ \textup{and}\ k\gg0.
	\end{equation}
	When $R$ is complete, the above modules are finitely generated, and hence equation (\ref{eq:dim_R-const}) would be true if we could prove that
	\begin{equation}\label{eq:ass_R-const}
	\Ass_R(\D_R(\H_\m^j(R/I^{k+1})))=\Ass_R(\D_R(\H_\m^j(R/I^{k}))),\quad\text{for all}\ j\ge0\ \textup{and}\ k\gg0.
    \end{equation}
	Unfortunately, by a famous example of Katzman \cite{Kat} this is not always true (see Example \ref{Ex:notAss}). Nevertheless, equation (\ref{eq:dim_R-const}) may still hold. On the other hand,
	
	\begin{Thm}\label{ThmF}
		Let $S=K[x_1,\dots,x_n]$ be the standard graded polynomial ring over a field $K$, and let $I\subset S$ be a monomial ideal. Then
		$$
		S_r\text{-}\!\depth(S/I^{k+1})\ =\ S_r\text{-}\!\depth(S/I^k),
		$$
		for all $r\ge1$ and all $k\gg0$.
	\end{Thm}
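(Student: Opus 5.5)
Fix $r\ge1$. The plan is to reduce Theorem~\ref{ThmF} to a statement in Presburger arithmetic about the exponent vectors of $I^k$, and then use the fact that Presburger-definable subsets of $\NN$ are eventually periodic. The reduction goes through Theorem~\ref{ThmE}. Choose ${\bf g}\in\ZZ_{\ge0}^n$ depending on $k$, say ${\bf g}=k\,{\bf deg}(I)$, or a fixed affine function of $k$. Then ${\bf g}\ge{\bf deg}(I^k)$, and
$$
S_r\text{-}\!\depth(S/I^k)=\max\{i:\ S/\Sigma_i^{\bf g}(I^k)\ \textup{satisfies}\ (S_r)\}.
$$
The number of possible values $i\in\{0,\dots,n\}$ is finite. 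It therefore suffices to show, for each $i$, that the set
$$
A_i=\{k\ge1:\ S/\Sigma_i^{k{\bf deg}(I)}(I^k)\ \textup{satisfies}\ (S_r)\}
$$
is eventually periodic, and then to upgrade eventual periodicity to eventual constancy.

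The first step is to check that $A_i$ is definable by a first-order formula in $(\NN,+,\le)$. The skeleton $\Sigma_i^{\bf g}(I^k)$ is generated by the monomials ${\bf x}^{\bf a}\in I^k$ with ${\bf a}\le{\bf g}$ and with at most $i$ coordinates satisfying $a_j<g_j$ (or the analogous condition from Section~\ref{sec4}). Membership ${\bf x}^{\bf a}\in I^k$ is Presburger-definable: it says ${\bf a}\ge\sum_l c_l{\bf u}_l$ with $\sum_l c_l=k$, where ${\bf u}_l$ are the generator exponents. Next I express $(S_r)$ for a monomial quotient combinatorially. By Takayama's formula, local cohomology in each multidegree is the reduced cohomology of a simplicial complex $\Delta_{\bf a}(J)$ whose faces are defined by membership conditions on $J$. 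The condition $(S_r)$ is equivalent, via the Hochster/Takayama-type criterion (dimension bounds on the supports of $\D(\H^j_\m)$, as in the proof of Theorem~\ref{ThmE}), to a condition on which links or degree complexes have vanishing cohomology. For a fixed $n$ there are finitely many possible complexes on $[n]$, and their cohomology is a fixed finite table. So "$\Delta_{\bf a}$ equals a given complex" is a finite Boolean combination of membership statements. Quantifying over ${\bf a}$, with the relevant coordinates bounded by ${\bf g}$ (a linear function of $k$), keeps everything inside Presburger arithmetic.

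The second step applies the classical theorem that every Presburger-definable subset of $\NN$ is ultimately periodic. Hence each $A_i$ is eventually periodic, and so is $k\mapsto S_r\text{-}\!\depth(S/I^k)$.

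The final step, which I expect to be the main obstacle, is ruling out genuine periodic oscillation. I would first try to show that for $k\gg0$ the relevant degree complexes $\Delta_{\bf a}(I^k)$ are governed by the Rees algebra or the convex geometry of the Newton polyhedron. Concretely, for large $k$ the set of complexes that occur is monotone in $k$: if a complex occurs for $I^k$, then it occurs for $I^{k+1}$, by translating ${\bf a}$ by a generator of $I$ lying in a suitable face. Together with finiteness, this gives eventual stabilization of the sets of occurring complexes in each relevant degree pattern. That in turn forces each $A_i$ to be eventually constant rather than merely periodic. If monotonicity fails in general, the fallback is to combine periodicity with the behavior of $k\mapsto c\,k$ for multiples. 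The complexes appearing for $I^{mk}$ stabilize as $k$ grows, by a Brodmann-type argument on the Rees algebra as in Herzog--Hibi \cite{HH}. This shows that all residues modulo the period yield the same eventual value.
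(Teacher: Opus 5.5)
\textbf{The gap: eventual periodicity is never upgraded to eventual constancy.} This is the step you flagged yourself. Presburger-definability of the sets $A_i$ only shows that $k\mapsto S_r\text{-}\!\depth(S/I^k)$ is eventually periodic, and neither of your closing arguments works.

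Your monotonicity claim does not hold as stated. If ${\bf b}\in\NN^n$ and $u\in\mathcal{G}(I)$, then ${\bf x^b}\in(I^k)_H$ implies $u{\bf x^b}\in(I^{k+1})_H$. This only gives $\Delta_{{\bf b}+{\bf deg}(u)}(I^{k+1})\subseteq\Delta_{\bf b}(I^k)$. The reverse inclusion would need the colon identity $((I^{k+1})_H:u)=(I^k)_H$, which fails in general: for $I=(x^3,xy,y^3)$ and $u=x^3$ one has $y^{3k-3}\in(I^{k+1}:u)\setminus I^k$ for every $k\ge2$. For degrees with negative entries the translation also changes $G_{\bf b}$. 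So non-vanishing homology is not carried from $I^k$ to $I^{k+1}$.

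Even a stabilized list of occurring complexes would not settle the question. $\dim\D(\H^j_\m(S/I^k))$ depends on whether a non-acyclic complex occurs in \emph{infinitely many} degrees of a given support, not merely on which complexes occur.

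The fallback does not follow. Stabilization along $k\mapsto mk$ concerns a single residue class, which periodicity already makes eventually constant; the whole issue is comparing different residue classes. A Brodmann/Herzog--Hibi argument is also not available off the shelf. Those arguments rest on finitely generated modules over the Rees algebra that detect depth, and you exhibit none for the contravariant family $\D(\H^j_\m(S/I^k))\cong\Ext^{n-j}_S(S/I^k,S)$. In general even its associated primes need not stabilize (Example \ref{Ex:notAss}).

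\textbf{Comparison with the paper.} The paper does not use skeletons. It fixes $j$ and $F\subseteq[n]$, and uses Lemma \ref{Lem:dim-multi} to reduce $\dim\D(\H^j_\m(S/I^k))$ to whether the set $\mathcal{B}_k(F)$ of non-vanishing degrees with support $F$ is infinite. It then shows via Takayama's formula that $\mathcal{B}(F)\subseteq\ZZ\times\ZZ^n$ is Presburger. Your detour through Theorem \ref{Thm:Skeletons} with ${\bf g}=k\,{\bf deg}(I)$ gains nothing, because deciding $(S_r)$ for $S/\Sigma_i^{\bf g}(I^k)$ requires the same local cohomology data. Two side remarks on that step:
\begin{itemize}
\item $\Sigma_i^{\bf g}(J)=J+({\bf x^a}:\rho_{\bf g}({\bf a})>i)$, which is not what you wrote.
\item For $r=1$ that theorem must be read with ``equidimensional and $(S_1)$''. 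For $I=(xy,xz)\subset K[x,y,z]$, $S/I$ satisfies $(S_1)$ but $S_1\text{-}\!\depth(S/I)=1<2=\dim(S/I)$.
\end{itemize}

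More to the point, the paper closes its argument by asserting that semilinearity of $\mathcal{B}(F)$ immediately makes the fibers $\mathcal{B}_k(F)$ eventually all finite or all infinite. That does not follow from semilinearity: $(2,{\bf e}_1)+\NN(2,{\bf 0})+\NN(0,{\bf e}_1)$ is a linear set whose fibers are infinite for even $k$ and empty for odd $k$. So the paper's Presburger argument, as written, also yields only eventual periodicity. You have located the real crux correctly, but neither your proposal nor the paper's proof supplies the extra input needed to rule out oscillation.
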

	
	To prove Theorem \ref{ThmF} we use Takayama's formula \cite{T} and Presburger arithmetic \cite{ES}. To this end, we have to carefully analyze the asymptotic vanishing behaviour of the multigraded components of the local cohomology modules.
	
	Presburger arithmetic was used by Dao and Monta\~no in their pioneering paper \cite{DM} and more recently in \cite{DMMNW}. To the best of our knowledge, aside from these works and the present paper, it has rarely appeared in Commutative Algebra. The results in \cite{DM,DMMNW} and Theorem \ref{ThmF} illustrate its potential in the field.
	
	In view of Theorem \ref{ThmF} and Brodmann classical result \cite{B79a}, we expect:
	
	\begin{Conjecture}\label{ConjG}
		Let $(R,\m)$ be a Noetherian local ring or a standard graded $K$-algebra. Let $I\subset R$ be an ideal, which we assume is homogeneous if $R$ is a $K$-algebra. Then
		$$
		S_r\text{-}\!\depth(R/I^{k+1})\ =\ S_r\text{-}\!\depth(R/I^k),
		$$
		for all $r\ge1$ and all $k\gg0$.
	\end{Conjecture}

    We can prove Conjecture \ref{ConjG} in the following cases (see Corollary \ref{Cor:resume}):
    
	\begin{enumerate}
		\item[\textup{(a)}] $R$ is a polynomial ring over a field $K$ and $I$ is a monomial ideal.
		\item[\textup{(b)}] $R$ is Cohen-Macaulay domain with a $($graded, if $R$ is a $K$-algebra$)$ canonical module $\omega_R$ and $\dim(R)\le3$.
		\item[\textup{(c)}] $R/I^k$ is $($sequentially$)$ Cohen-Macaulay for all $k\gg0$.
		\item[\textup{(d)}] $\depth(R/I^k)=0$ for all $k\gg0$.
		\item[\textup{(e)}] $R$ is a homomorphic image of a Gorenstein ring, $\depth(R/I^k)\!=\!\dim(R/I^k)\!-\!1$ for all $k\gg0$ and $r=1$.
	\end{enumerate}

	By Theorem \ref{ThmF}, the function $k\mapsto(S_1\text{-}\!\depth(S/I^k),\dots,S_d\text{-}\!\depth(S/I^k))$, with $I$ a monomial ideal in $S$ and $d=\max\{1,\dim(S/I)\}$, is eventually constant. We call this function, the \textit{depth strata} of $I$. In view of \cite{HNTT}, we expect that any convergent non-negative function $f:k\in\ZZ_{>0}\mapsto f(k)=(a_{1,k},\dots,a_{d,k})\in\ZZ_{\ge0}^d$ with $a_{1,k}\ge\cdots\ge a_{d,k}$ for all $k\ge1$, and if $a_{d,k}=0$ then $a_{i,k}=0$ for all $i$ (see Proposition \ref{Prop:depth0}), is the depth strata of a monomial ideal $I$.
	
	\section{Basic properties of the Serre depth}\label{sec1}
	
	Let $(R,\m)$ be either a Noetherian local ring or a standard graded $K$-algebra with unique homogeneous maximal ideal $\m$ and residue class field $R/\m\cong K$.\smallskip
	
	Recall that the Krull dimension of a non-zero (not necessarily finitely generated) $R$-module $M$ is defined as
	$$
	\dim_R(M)\ =\ \sup\{\dim(R/P):\ P\in\Supp_R(M)\},
	$$
	where $\Supp_R(M)=\{P\in\Spec(R):\ M_P\ne0\}$ is the support of $M$. By convention, the Krull dimension of the zero module is $-\infty$. When the base ring $R$ is clear from the context, we omit the subscript $R$ and we simply write $\dim(M)$ and $\Supp(M)$ instead of $\dim_R(M)$ and $\Supp_R(M)$.\smallskip
	
	When $R$ is a local ring, let $\E_R(K)$ be an injective hull of $K$.
	
	For a $R$-module $M$, which we assume is homogeneous if $R$ is a $K$-algebra, the (\textit{graded}, if $R$ is a $K$-algebra) \textit{Matlis dual} of $M$ is the $R$-module defined as
	$$
	\D_R(M)\ =\ \begin{cases}
		\,\Hom_{R}(M,\E_R(K)),&\textup{if}\ R\ \textup{is local},\\
		\,\Hom_{K}(M,K),&\textup{if}\ R\ \textup{is a}\ K\textup{-algebra}.
	\end{cases}
	$$
	
	When $R$ is a $K$-algebra, and $M$ is $G$-graded, with $G=\ZZ^n$ for some $n\ge1$, then $\D_R(M)$ is also $G$-graded (see \cite[Exercise 14.4.2(i)]{BS}) with ${\bf g}$-th graded piece:
	\begin{equation}\label{eq:gradedPieceMatlis}
		\D_R(M)_{\bf g}\ =\ \Hom_K(M,K)_{\bf g}\ =\ \Hom_K(M_{-{\bf g}},K),\quad\textup{for all}\ {\bf g}\in G.
	\end{equation}
	
	\begin{Definition}\label{Def:SerreDepth}
		\rm Let $r\ge1$, and let $M$ be a $R$-module, which we assume is homogeneous if $R$ is a $K$-algebra. The $r$-th \textit{Serre depth} of $M$ is defined as
		$$
		S_r\text{-}\!\depth_R(M)\ =\ \min\{j:\ \dim\D_R(\H_{\m}^j(M))\ge j-r+1\}
		$$
		if $M\ne0$; otherwise, we set $S_r\text{-}\!\depth_R(0)=-\infty$. When the base ring $R$ is clear from the context, we omit the subscript $R$.
	\end{Definition}\smallskip
    
    Definition \ref{Def:SerreDepth} agrees with the definition given in \cite{MT} when $R=K[x_1,\dots,x_n]$.
    
    \begin{Examples}\label{Ex:(i)-(vi)}
    	\rm (i) Let $(R,\m)$ be local and let $M$ be a $R$-module of finite length. Then $\Supp(M)=\{\m\}$, $\H_\m^0(M)\cong M$ and $\H_\m^j(M)=0$ for all $j>0$. Since $M$ is finitely generated and $\Ass(\E_R(K))=\{\m\}$, we have $$\Ass(\D_R(\H_\m^0(M)))=\Ass(\Hom_R(M,\E_R(K)))=\Supp(M)\cap\Ass(\E_R(K))=\{\m\}.$$
    	Hence, $\dim\D(\H_\m^0(M))=0$ and $\dim\D(\H_\m^j(M))=-\infty$ for $j>0$. We conclude that $S_r\text{-}\!\depth(M)=0$ for all $r\ge1$. Note that $M$ is trivially Cohen-Macaulay.\smallskip
    	
    	(ii) More generally, if $M$ is a finitely generated, Cohen-Macaulay $R$-module, Proposition \ref{Prop:inequalities} implies that $S_r\text{-}\!\depth(M)=\depth(M)=\dim(M)$, for all $r\ge1$.\smallskip
    	
    	(iii) Let $R$ be Cohen-Macaulay with canonical module $\omega_R$ (which we assume is graded, if $R$ is a $K$-algebra). Following Stanley \cite{St} (see also \cite{Sc}), a finitely generated $R$-module $M$, which we assume is homogeneous if $R$ is a $K$-algebra, is called \textit{sequentially Cohen-Macaulay} if there exists a filtration $$0=M_0\subset M_1\subset\cdots\subset M_t=M$$ of $M$, where the modules $M_i$ are homogeneous if $R$ is a $K$-algebra, such that
    	\begin{enumerate}
    		\item[(a)] $M_{i}/M_{i-1}$ is a Cohen-Macaulay module, for all $i=1,\dots,t$, and
    		\item[(b)] $\dim(M_1/M_0)<\dim(M_2/M_1)<\dots<\dim(M_t/M_{t-1})$.
    	\end{enumerate}
        
        By Peskine characterization (see \cite[Theorem 1.4]{HS}), and Grothendieck's local duality (see \cite[Theorem 3.5.8]{BH} and \cite[Theorem 3.6.19]{BH} for the graded version), the following conditions are equivalent:
        \begin{enumerate}
        	\item[(a)] $M$ is sequentially Cohen-Macaulay.
        	\item[(b)] $\D(\H_\m^j(M))\cong\Ext^{\dim(R)-j}_R(M,\omega_R)$ is either $0$ or a Cohen-Macaulay module of dimension $j$, for all $j=0,\dots,\dim(M)$.
        \end{enumerate}
        
        Let $M$ be sequentially Cohen-Macaulay with $\depth(M)=t$. By \cite[Theorem 3.5.7]{BH} we have $t=\inf\{j:\H_\m^j(M)\ne0\}$. Part (b) implies that $\dim\D(\H_\m^t(M))=t\ge t-r+1$ for all $r\ge1$. It follows that $S_r\text{-}\!\depth(M)=\depth(M)$ for all $r\ge1$.\smallskip
        
    	(iv) If $R$ is a $1$-dimensional Cohen-Macaulay ring, and $I\subset R$ is a non-zero ideal, then $S_r\text{-}\!\depth(R/I)=S_r\text{-}\!\depth(I)-1=0$ for all $r\ge1$. In fact $\dim(R/I)=0$ and so $S_r\text{-}\!\depth(R/I)=0$ by Proposition \ref{Prop:inequalities}(a). On the other hand, $I$ is automatically Cohen-Macaulay of dimension $1$, and so $S_r\text{-}\!\depth(I)=1$ for all $r\ge1$.\smallskip
    	
    	(v) Let $R=K[x,y,z]$ and $I=(x^2-y^2,x^3,yz^2,z^3)$. Using Macaulay2 \cite{GDS} and Grothendieck's local duality, we have $\dim\D(\H_\m^1(I))=\dim\Ext^2_R(I,\omega_R)=0$, $\dim\D(\H_\m^3(I))=\dim\Ext^0_R(I,\omega_R)=3$ and $\dim\D(\H_\m^j(I))=-\infty$ for the other values of $j$. It follows that $S_1\text{-}\!\depth(I)=3$ and $S_r\text{-}\!\depth(I)=1$ for all $r\ge2$.\smallskip
    	
    	(vi) Let $R=K[x,y,z]$, $I=(x^2-y^2,x^3,yz^2,z^3)$ and $M=R/I$. The computations in (v) and Proposition \ref{Prop:Sr-I-R/I} yield that $S_r\text{-}\!\depth(M)=\depth(M)=0$, for all $r\ge1$.
    \end{Examples}\smallskip

    If $(R,\m)$ is local, the \textit{$\m$-adic completion} of $R$ is defined as inverse limit:
    $$
    \widehat{R}\ =\ \lim\limits_{\substack{\longleftarrow\\ j}}R/\m^jR.
    $$
    
    Similarly, $\widehat{\!M\!}=\lim\limits_{\longleftarrow}{\phantom{f}\!\!\!\!}_jM/\m^jM$ is called the $\m$-adic completion of a $R$-module $M$.
    
    We say that $R$ (respectively, $M$) is \textit{$\m$-adically complete} if $R\cong\widehat{R}$ (respectively, $M\cong\widehat{\!M\!}$\,). By \cite[Theorem 8.7]{Mat}, if $M$ is finitely generated we have $M\otimes_R\widehat{R}\cong\widehat{\!M\!}$.\smallskip
    
    \begin{Theorem}\label{Thm:completion}
    	Let $R$ be local. For any finitely generated $R$-module $M$, we have
    	$$
    	S_r\text{-}\!\depth_R(M)\ =\ S_r\text{-}\!\depth_{\widehat{R}}(\,\widehat{\!M\!}\,),\quad\text{for all}\ r\ge1.
    	$$
    \end{Theorem}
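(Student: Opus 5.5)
The plan is to show that, for each $j$, the two numbers $\dim_R\D_R(\H_\m^j(M))$ and $\dim_{\widehat R}\D_{\widehat R}(\H_{\widehat\m}^j(\widehat M))$ coincide. Since the definition of $S_r\text{-}\!\depth$ only involves these dimensions, the equality of Serre depths then follows for every $r\ge1$. (If $M=0$ then $\widehat M=0$, and both sides equal $-\infty$.)

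\emph{Step 1 (local cohomology).} By flat base change and the independence theorem,
$$\H_{\widehat\m}^j(\widehat M)\cong\H_{\m}^j(M)\otimes_R\widehat R.$$
Moreover, $\H_\m^j(M)$ is Artinian, hence $\m$-torsion. Every $\m$-torsion module $N$ carries a natural $\widehat R$-module structure with $N\otimes_R\widehat R\cong N$. So $\H_{\widehat\m}^j(\widehat M)\cong\H_\m^j(M)$ as $\widehat R$-modules, where the right-hand side has its canonical $\widehat R$-structure.

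\emph{Step 2 (Matlis duals).} We have $\E_R(K)\cong\E_{\widehat R}(K)$, and the $R$-structure on $\E_{\widehat R}(K)$ is the restriction of the $\widehat R$-structure. For an $\m$-torsion module $N$, every $R$-linear map $N\to\E_R(K)$ is automatically $\widehat R$-linear, because $\widehat R$ acts on both modules through the quotients $R/\m^k$. Hence
$$\D_R(N)=\Hom_R(N,\E_R(K))=\Hom_{\widehat R}(N,\E_{\widehat R}(K))=\D_{\widehat R}(N)$$
as $\widehat R$-modules, and its $R$-structure is the restriction of scalars. Applied to $N=\H_\m^j(M)$, this gives $\D_R(\H_\m^j(M))\cong\D_{\widehat R}(\H_{\widehat\m}^j(\widehat M))$. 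Call this module $T_j$.

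\emph{Step 3 (comparing dimensions).} This is the main obstacle, since the dimension of a non-finitely generated module is defined through its support, which need not behave well under restriction of scalars. The plan is to use that $T_j$ is finitely generated over $\widehat R$: Matlis duality sends Artinian $\widehat R$-modules to finitely generated ones. Hence
$$\Supp_{\widehat R}(T_j)=V(J),\qquad J=\operatorname{Ann}_{\widehat R}T_j,\qquad \dim_{\widehat R}T_j=\dim\widehat R/J.$$
On the $R$-side, I would show that $\Supp_R(T_j)$ is the image of $\Supp_{\widehat R}(T_j)$ under the contraction map $Q\mapsto Q\cap R$.
\begin{itemize}
\item[(i)] If $P=Q\cap R$ with $(T_j)_Q\ne0$, then $(T_j)_P\ne0$, because $(T_j)_Q$ is a further localization of $(T_j)_P$.
\item[(ii)] Conversely, if $(T_j)_P\ne0$, then $(T_j)_P=T_j\otimes_{\widehat R}(\widehat R\otimes_R R_P)$ is nonzero. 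Since $\widehat R_P:=\widehat R\otimes_R R_P$ is the localization of $\widehat R$ at the multiplicative set $R\setminus P$, the support of $T_j$ must meet the primes of $\widehat R$ that avoid $R\setminus P$. This yields $Q\in\Supp_{\widehat R}(T_j)$ with $Q\cap R\subseteq P$.
\end{itemize}
Finally, we compare dimensions. For $Q$ contracting to $P$, we have $\dim\widehat R/Q\le\dim\widehat R/P\widehat R=\dim R/P$. Conversely, for a minimal prime $Q$ of $P\widehat R$ we have $\dim\widehat R/Q=\dim R/P$ by going down for the flat map $R\to\widehat R$. Combined with the support description in (i) and (ii), and the fact that supports are closed under specialization on both sides, this gives
$$\sup\{\dim R/P:\ P\in\Supp_R T_j\}=\sup\{\dim\widehat R/Q:\ Q\in\Supp_{\widehat R}T_j\}.$$
Some care is needed for the inequality $\le$ in (ii). Given $P$ in the $R$-support, pick $Q\in V(J)$ with $Q\cap R\subseteq P$. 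Then $P\widehat R+Q\subseteq\widehat\m$ is proper, so one can enlarge $Q$ to a minimal prime $Q'$ of $P\widehat R+Q$, which still lies in $V(J)$. The remaining task is to show $\dim\widehat R/Q'\ge\dim R/P$. If this direct route stalls, I would instead avoid supports altogether by rewriting $\dim_R T_j$ through Grothendieck vanishing/nonvanishing or via $\Ext$ modules over $\widehat R$ (which is a homomorphic image of a Gorenstein ring), where dimension is detected by Hilbert–Samuel type invariants preserved by completion.

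Once the dimensions agree for every $j$, the minima defining $S_r\text{-}\!\depth_R(M)$ and $S_r\text{-}\!\depth_{\widehat R}(\widehat M)$ are taken over identical index sets, which proves the theorem.
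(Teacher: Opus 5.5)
\textbf{There is a genuine gap at the end of Step 3, and it cannot be filled.} Your Steps 1 and 2 are correct. Your (i)--(ii) also correctly show that $\Supp_R T_j$ consists of the primes of $R$ containing $Q\cap R$ for some $Q\in\Supp_{\widehat R}T_j$, so $\dim_R T_j=\max\{\dim R/(Q\cap R):\ Q\in\Supp_{\widehat R}T_j\}$. The gap is the inequality $\dim_R T_j\le\dim_{\widehat R}T_j$, i.e.\ your ``remaining task'' $\dim\widehat R/Q'\ge\dim R/P$. It is false in general, so neither your direct route nor your fallback can work.

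The paper's own Example~\ref{Ex:HRW}, with $M=R$, shows this. There $\widehat R$ has an embedded prime $Q$.
\begin{itemize}
\item Since $\depth\widehat R=\depth R\ge1$ and $\dim\widehat R=2$, we get $\dim\widehat R/Q=1$.
\item By \cite[Theorem 23.2]{Mat}, $Q\cap R\in\Ass_R(R)=\{0\}$.
\item Lemma~\ref{Lem:serre2} over $\widehat R$ gives $Q\in\Supp_{\widehat R}T_1$, because $\H^0_{Q\widehat R_Q}(\widehat R_Q)\neq0$.
\item The same lemma forces $\dim\widehat R/Q'\le1$ for every $Q'\in\Supp_{\widehat R}T_1$, so $\dim_{\widehat R}T_1=1$.
\item Your (i) puts $Q\cap R=(0)$ into $\Supp_R T_1$, so $\dim_R T_1=2$.
\end{itemize}
What your argument does prove, from (i) and $\dim\widehat R/Q\le\dim R/(Q\cap R)$, is $\dim_R T_j\ge\dim_{\widehat R}T_j$. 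Hence you only get $S_r\text{-}\!\depth_R(M)\le S_r\text{-}\!\depth_{\widehat R}(\widehat M)$. The dimensions do agree when $R$ itself is a homomorphic image of a Gorenstein ring $S$. In that case (\ref{eq:iso-local-D}) gives $T_j\cong E\otimes_R\widehat R$ with $E=\Ext^{\dim S-j}_S(M,S)$ finitely generated over $R$, and both dimensions equal $\dim_R E$.

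\textbf{The paper's proof does not close this gap either.} It derives (\ref{eq:dim-enough}) from $T_j\cong T_j\otimes_R\widehat R$ (Remark~\ref{Rem:Mat}) together with Lemma~\ref{Lem:changeofrings}. But \cite[Theorem 8.7]{Mat} requires finite generation over $R$, not over $\widehat R$, and the computation above shows (\ref{eq:dim-enough}) is false.

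In the HRW example the two Serre depths still coincide, but the reverse inequality can genuinely fail. Take $K$ of characteristic $0$ and a local domain $R$ with $\widehat R\cong K[[a,b,c,d,t]]/((a,b)\cap(c,d))$ and $(a,b,c,d)\widehat R\cap R=0$; such domains should exist by Loepp's theorem on local domains with a prescribed local generic formal fibre. Then:
\begin{itemize}
\item $\depth R=2$.
\item A Mayer--Vietoris computation gives $\D_R(\H^2_\m(R))\cong\widehat R/(a,b,c,d)\cong K[[t]]$. This module contains $R$, so its $R$-dimension is $3$, and therefore $S_1\text{-}\!\depth_R(R)=2$.
\item $\widehat R$ is unmixed, so $S_1\text{-}\!\depth_{\widehat R}(\widehat R)=3$ by Theorem~\ref{Thm:Serre}.
\end{itemize}
So without an extra hypothesis, such as $R$ being a homomorphic image of a Gorenstein ring, only the inequality is available.
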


    We postpone the proof of this result until Section \ref{sec2}. We mention it here because it is needed for the proof of the following proposition, which collects some basic properties of the Serre depth.
    
    \begin{Proposition}\label{Prop:inequalities}
    	Let $M$ be a $R$-module, which we assume is homogeneous if $R$ is a $K$-algebra. The following statements hold.
    	\begin{enumerate}
    		\item[\textup{(a)}] $\dim(M)\ge S_1\text{-}\!\depth(M)\ge\dots\ge S_{r-1}\text{-}\!\depth(M)\ge S_r\text{-}\!\depth(M)\ge\cdots$.
    		\item[\textup{(b)}] If $M$ is finitely generated, then $S_r\text{-}\!\depth(M)=\depth(M)$, for all positive integers $r\ge\dim(M)$.
    	\end{enumerate}
    \end{Proposition}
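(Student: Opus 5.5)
The plan is to prove the monotonicity in (a) directly from the definition, to bound $S_1\text{-}\!\depth(M)$ by $\dim(M)$ via the top local cohomology module, and to read off (b) from the first non-vanishing local cohomology module.

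\emph{Monotonicity in $r$.} For fixed $j$, the condition $\dim\D(\H_\m^j(M))\ge j-r+1$ only gets weaker as $r$ grows, because the right-hand side decreases. Hence the set $\{j:\ \dim\D(\H_\m^j(M))\ge j-r+1\}$ for $r-1$ is contained in the set for $r$. Taking minima gives $S_{r-1}\text{-}\!\depth(M)\ge S_r\text{-}\!\depth(M)$. If $M=0$ everything is $-\infty$ and there is nothing to prove.

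\emph{The bound $\dim(M)\ge S_1\text{-}\!\depth(M)$.} If $\dim(M)=\infty$ there is nothing to show, so let $d=\dim(M)<\infty$. It suffices to show that $j=d$ belongs to the defining set for $r=1$, namely $\dim\D(\H_\m^d(M))\ge d$. Grothendieck's vanishing theorem (valid for arbitrary modules) places all non-zero $\H_\m^j(M)$ in the range $j\le d$, so $j=d$ is the natural candidate.
\begin{itemize}
\item For finitely generated $M$ in the local case, I would pass to $\widehat R$ using Theorem~\ref{Thm:completion}; completion preserves dimension, depth and local cohomology over $\m$. I would then write $\widehat R$ as a quotient of a complete Gorenstein (regular) local ring $T$ of dimension $n$. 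By local duality, $\D(\H_\m^d(M))\cong\Ext_T^{n-d}(M,T)$, and it is standard that this module has dimension exactly $d$. Equivalently, the attached primes of $\H^d_\m(M)$ are the $P\in\Ass M$ with $\dim R/P=d$.
\item For a non-finitely generated $M$, I would write $M=\varinjlim M_\lambda$ over its finitely generated submodules. Local cohomology commutes with direct limits, and $\D$ turns these into inverse limits. I would then choose $M_\lambda$ of dimension $d$ and use right exactness of $\H^d_\m$ on modules of dimension $\le d$ to get a surjection $\H^d_\m(M_\lambda)\to\H^d_\m$ of a suitable quotient. This is the step I expect to be the main obstacle, and some care with the general-module convention may be needed here.
\item The graded case is identical, using graded local duality over a polynomial ring.
\end{itemize}

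\emph{Part (b).} Let $M$ be finitely generated with $t=\depth(M)=\min\{j:\H_\m^j(M)\ne0\}$ and $d=\dim(M)$, and let $r\ge d$.
\begin{itemize}
\item For $j<t$, $\D(\H_\m^j(M))=0$ has dimension $-\infty$, so no such $j$ qualifies. Hence $S_r\text{-}\!\depth(M)\ge t$.
\item At $j=t$, the module $\D(\H_\m^t(M))$ is non-zero, since Matlis duality is faithful, so its dimension is $\ge0$.
\item If $t<d\le r$, then $t-r+1\le0$, so $j=t$ qualifies.
\item If $t=d$, then $t-r+1\le1$, and the step above gives $\dim\D(\H_\m^d(M))=d$. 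This is $\ge1$ when $d\ge1$, and $\ge0=t-r+1$ when $d=0$, because $r\ge1$.
\end{itemize}
In all cases $S_r\text{-}\!\depth(M)=t=\depth(M)$. Note that (b) thus only uses the finitely generated version of the bound in (a).
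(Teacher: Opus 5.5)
\textbf{Gap in the non-finitely generated case.} This step cannot be completed, because for such modules the bound $\dim(M)\ge S_1\text{-}\!\depth(M)$ is false. Take $R=K[[x]]$ and $M=K((x))$; the graded analogue is $M=K[x,x^{-1}]$ over $K[x]$.
\begin{itemize}
\item $\Supp(M)=\Spec(R)$, so $\dim(M)=1$.
\item $\Gamma_\m(M)=0$, since $M$ is torsion-free.
\item The complex $0\to M\to M_x\to0$ computing $\H^\bullet_\m$ gives $\H^1_\m(M)=0$, because $x$ already acts invertibly on $M$.
\end{itemize}
Hence $\D(\H^j_\m(M))=0$ for every $j$, and the set $\{j:\ \dim\D(\H^j_\m(M))\ge j\}$ is empty. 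With the natural convention $\min\emptyset=+\infty$, the inequality fails.

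Your direct-limit strategy breaks exactly here. We have $\H^1_\m(M)=\varinjlim_k\H^1_\m(x^{-k}R)$, and each term is isomorphic to $\H^1_\m(R)\neq0$. But the transition maps are multiplication by $x$, so the limit is $0$. Right exactness of $\H^d_\m$ only gives surjections onto local cohomology of quotients of the $M_\lambda$, and $M$ is not such a quotient. For arbitrary $M$, the only valid statement is the weak one from Grothendieck vanishing: every $j$ in the defining set satisfies $j\le\dim(M)$. You should restrict the bound in (a) to finitely generated $M$, which, as you observe, is all that (b) uses.

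\textbf{The rest.} Everything else is correct and close to the paper. The paper's proof of (a) is only the Grothendieck-vanishing step. That step excludes every $j>\dim(M)$ but tacitly assumes some $j$ qualifies, and the example above shows this is not automatic. Your observation that $j=\dim(M)$ qualifies for finitely generated $M$ is exactly what makes (a) correct in that case. It rests on $\dim\D(\H^d_\m(M))=d$, obtained either via completion and local duality, or via $\Att(\H^d_\m(M))=\{P\in\Ass(M):\ \dim(R/P)=d\}$ as in the paper's Remark.

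Your proof of (b) follows the paper's case analysis. The paper splits into $t<r$ and $t=r$; the latter forces $r=d$ and $M$ Cohen--Macaulay, and is settled by the Cohen--Macaulay case of the same dimension fact (Bruns--Herzog, Theorem 3.3.10). You split into $t<d$ and $t=d$ and reuse the general fact already proved for (a), which is slightly more economical.
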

    \begin{proof}
    	(a) By definition it is clear that $S_{r}\text{-}\!\depth(M)\ge S_{r+1}\text{-}\!\depth(M)$ for all $r\ge1$. By Grothendieck's vanishing theorem \cite[Theorem 6.1.2]{BS}, we have $\H_\m^j(M)=0$ for all $j>\dim(M)$. Hence $\D(\H_\m^j(M))=0$ for all $j>\dim(M)$. This shows that $\dim(M)\ge S_{r}\text{-}\!\depth(M)$, for all $r\ge1$.\smallskip
    	
    	(b) Let $M$ be finitely generated and $d=\dim(M)$. If $d=0$, then $\depth(M)=0$ as well, and by part (a) we have $S_r\text{-}\!\depth(M)=0$ for all $r\ge1$. So, we may assume that $d\ge1$. It is well-known (see \cite[Theorem 3.5.7]{BH}) that
    	$$
    	\depth(M)\ =\ \inf\{j:\ \H_\m^j(M)\ne0\}.
    	$$
    	
    	Let $t=\depth(M)$. If $j<t$, then $\D(\H_\m^j(M))=0$, hence $\dim\D(\H^j_\m(M))=-\infty$ and so $S_r\text{-}\!\depth(M)\ge t$ for all $r\ge d$. It remains to prove that $\dim\D(\H_\m^t(M))\ge t-r+1$, for all $r\ge d$. This will imply that $S_r\text{-}\!\depth(M)=t$ for all $r\ge d$. We distinguish the two possible cases.\smallskip
    	
    	\textbf{Case 1.} Let $t<r$. Then $t-r+1\le0$. Since $\H_\m^t(M)\ne0$, we have $\D(\H_\m^t(M))\ne0$ as well, and so $\dim\D(\H_\m^t(M))\ge0\ge t-d+1$, as desired.\smallskip
    	
    	\textbf{Case 2.} Let $t=r$. Since $t\le d$ and $r\ge d$, it follows that $r=d$. Hence $t-r+1=1$ and $M$ is Cohen-Macaulay of dimension $d\ge1$. We claim that $\dim\D(\H_\m^t(M))=d$ and this will conclude the proof.
    	
    	If $R$ is a local ring, we may assume it is complete. Indeed, by \cite[Corollary 2.1.8]{BH}, we have $\dim_R(M)=\dim_{\widehat{R}}(\,\widehat{\!M\!}\,)$ and $\depth_R(M)=\depth_{\widehat{R}}(\,\widehat{\!M\!}\,)$, and by Theorem \ref{Thm:completion}, we have $S_r\text{-}\!\depth_R(M)=S_r\text{-}\!\depth_{\widehat{R}}(\,\widehat{\!M\!}\,)$ for all $r\ge1$.\smallskip
    	
    	We can write $R\cong S/I$, where $(S,\n)$ is a Gorenstein local ring if $R$ is local or $S=K[x_1,\dots,x_n]$ if $R$ is a $K$-algebra, and $I\subset S$ is an ideal, which is homogeneous if $R$ is a $K$-algebra. This is clear if $R$ is a $K$-algebra. If $R$ is local, since we assumed it is complete, our claim follows by Cohen's structure theorem (\cite[Theorem 29.4(ii)]{Mat}), because regular rings are Gorenstein (\cite[Proposition 3.1.20]{BH}).
    	
    	Having acquired our claim, in the local case \cite[Exercise 3.5.14]{BH} gives
    	$$
    	\D_R(\H_\m^t(M))\ \cong\ \D_S(\H_\n^t(M))\ \cong\ \Ext^{\dim(S)-t}_S(M,\omega_S)
    	$$
    	By \cite[Theorem 3.3.10(i)]{BH}, this $S$-module has Krull dimension $t=d\ge1$, because both $S$ and $M$ are Cohen-Macaulay and $M$ has Krull dimension $d$. Since $I$ annihilates $\D_R(\H_\m^t(M))$, we have $\dim_R\D_R(\H_\m^t(M))=\dim_S\D_R(\H_\m^t(M))=d\ge1$ as well.
    	
    	If $R$ is a $K$-algebra, since we trivially have $\H_\m^t(M)\cong\H_\n^t(M)$, then
    	$$
    	\D_R(\H_\m^t(M))\ =\ \Hom_K(\H_\m^t(M),K)\ \cong\ \Hom_K(\H_\n^t(M),K)\ =\ \D_S(\H_\n^t(M)).
    	$$
    	Finally, using that $\dim(M)=d$ and both $S$ and $M$ are Cohen-Macaulay, by the graded version of \cite[Theorem 3.3.10(i)]{BH}, $\D_R(\H_\m^t(M))$ has Krull dimension $t=d\ge1$ as a $S$-module. As explained before, $\dim_R\D_R(\H_\m^t(M))=t=d$ as well.
    \end{proof}

    \begin{Remark}
    	\rm Alternatively, we may conclude the proof of the second case as follows. By \cite[Theorem 7.3.2]{BS}, since $M$ is Cohen-Macaulay hence unmixed of dimension $d=t\ge1$, we have
    	\begin{equation}\label{eq:att1}
    		\Att(\H_\m^d(M))\ =\ \{P\in\Ass(M):\ \dim(R/P)=d\}\ =\ \Ass(M).
    	\end{equation}
    	Here $\Att(N)$ is the set of \textit{attached primes} of a $R$-module $N$ (see \cite[Definition 7.2.4]{BS}).
    	
    	By \cite[Proposition 3.5.4(a)]{BH}, $\H_\m^d(M)$ is Artinian. Since $R$ is complete, Matlis duality \cite[Theorem 10.2.12(ii)]{BS} (see \cite[Theorem 3.6.17]{BH} for the graded version) implies that $\D(\H_\m^d(M))$ is a Noetherian $R$-module and $\D(\D(\H_\m^d(M)))\cong\H_\m^d(M)$. Hence, \cite[Corollary 10.2.20]{BS} implies that
    	\begin{equation}\label{eq:att2}
    		\Att(\H_\m^d(M))\ =\ \Att(\D(\D(\H_\m^d(M))))\ =\ \Ass(\D(\H_\m^d(M))).
    	\end{equation}
    	Combining equations (\ref{eq:att1}) and (\ref{eq:att2}) we then conclude that $\Ass(\D(\H_\m^d(M)))=\Ass(M)$. Hence $\dim\D(\H_\m^d(M))=\dim(M)=d\ge1$.
    \end{Remark}

    The Serre depth behaves nicely with respect to direct sums.
    
    \begin{Proposition}
    	Let $\{M_i\}_{i\in\mathcal{I}}$ be a family of $R$-modules, which we assume are homogeneous if $R$ is a $K$-algebra. Then
    	$$
    	S_r\text{-}\!\depth(\bigoplus_{i\in\mathcal{I}}M_i)\ =\ \inf_{i\in\mathcal{I}}\{S_r\text{-}\!\depth(M_i)\},\quad\textit{for all}\ r\ge1.
    	$$
    \end{Proposition}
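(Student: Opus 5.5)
The plan is to reduce everything to the behaviour of the two functors entering the definition, local cohomology and Matlis duality, on direct sums. Write $M=\bigoplus_{i\in\mathcal{I}}M_i$. We may discard the zero summands: they contribute $-\infty$ to neither side in a harmful way if we interpret the infimum over the nonzero $M_i$. Once this convention is fixed, the case $M=0$ (all $M_i=0$) is trivial, so assume some $M_i\ne0$.

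First, local cohomology commutes with arbitrary direct sums, since $R$ is Noetherian and $\H_\m^j(-)\cong\varinjlim_k\Ext^j_R(R/\m^k,-)$. In the graded case it also respects the grading. Hence $\H_\m^j(M)\cong\bigoplus_i\H_\m^j(M_i)$. Second, $\Hom$ into a fixed module converts direct sums into products, so
$$\D_R(\H_\m^j(M))\cong\prod_{i\in\mathcal{I}}\D_R(\H_\m^j(M_i)).$$
In the graded case one takes the graded Hom, given degreewise by (\ref{eq:gradedPieceMatlis}). Degreewise this is still a product of the pieces $\Hom_K(\H_\m^j(M_i)_{-{\bf g}},K)$, and the argument below is unchanged.

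The key step, and the main obstacle, is to show that
$$\dim\prod_i N_i=\sup_i\dim N_i,$$
where $N_i=\D_R(\H_\m^j(M_i))$. The bound $\ge$ is clear because each $N_i$ is a direct summand of the product, so $\Supp N_i\subseteq\Supp\prod N_i$. For $\le$, recall that each $\H_\m^j(M_i)$ is $\m$-torsion, so every element of $N_i$ is killed by $\Ann_R(\H_\m^j(M_i))$. Localization does not commute with infinite products, however, so supports of products must be handled with care. To get around this, I would use that $\dim R<\infty$: all the dimensions are bounded by $\dim R$, so the supremum $s=\sup_i\dim N_i$ is finite. A prime $P$ with $\dim R/P>s$ then lies outside every $\Supp N_i$. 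I would show that $P\notin\Supp\prod N_i$ by producing, for each element $x=(x_i)$, some $s\notin P$ with $sx=0$. If uniformity of such annihilators fails, I would instead argue via $\Supp$ versus $V(\Ann)$, noting that $\Ann\prod N_i=\bigcap\Ann N_i$. Alternatively, and this is what I would try first to avoid the issue, observe that the definition only needs the threshold condition $\dim\ge j-r+1$.

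With this in hand the comparison is formal. Suppose $j<\inf_i S_r\text{-}\!\depth(M_i)$. Then every $N_i$ has $\dim N_i<j-r+1$. The bound above gives $\dim\D(\H_\m^j(M))<j-r+1$, so $j$ does not satisfy the defining condition for $M$. Conversely, suppose $j=S_r\text{-}\!\depth(M_{i_0})$ for some $i_0$. Then $N_{i_0}$ is a summand of $\D(\H_\m^j(M))$, hence $\dim\D(\H_\m^j(M))\ge j-r+1$. Together these give $S_r\text{-}\!\depth(M)=\min_i S_r\text{-}\!\depth(M_i)$. The infimum is attained because the values lie in a well-ordered set bounded below by $0$. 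This is consistent with Proposition \ref{Prop:inequalities}(a), which bounds each Serre depth above by the dimension.
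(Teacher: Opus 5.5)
Your route is the paper's route: $\H^j_\m$ commutes with direct sums, $\D$ turns the sum into a product, and everything reduces to $\dim\prod_i N_i=\sup_i\dim N_i$ for $N_i=\D(\H^j_\m(M_i))$. You correctly identify the inequality $\le$ as the crux, but you never prove it. It cannot be proved: for infinite families it is false, and with it the proposition. None of your fallbacks can work. Uniform annihilators need not exist. The intersection $\bigcap_i\operatorname{Ann}_R(N_i)$ may be $0$, so $V(\operatorname{Ann})$ bounds nothing. The threshold reformulation does not help either, because the dimension genuinely jumps.

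Here is a counterexample. Take $R=K[[x,y]]$ and $M_\ell=\m^\ell$ for $\ell\ge1$.
\begin{itemize}
\item Since $R$ is Cohen--Macaulay of dimension $2$, we have $\H^0_\m(\m^\ell)=0$, $\H^1_\m(\m^\ell)\cong R/\m^\ell$ and $\H^2_\m(\m^\ell)\cong\H^2_\m(R)$.
\item Hence $\dim\D(\H^1_\m(\m^\ell))=0<1$ and $\dim\D(\H^2_\m(\m^\ell))=2$, so $S_1\text{-}\!\depth(\m^\ell)=2$ for all $\ell$.
\item For $M=\bigoplus_\ell\m^\ell$ one has $\D(\H^1_\m(M))\cong\prod_\ell(0:_{\E_R(K)}\m^\ell)$.
\item Since $R/(x^\ell,y)$ is Artinian Gorenstein and $(x^\ell,y)\supseteq\m^\ell$, there is $e_\ell\in(0:_{\E_R(K)}\m^\ell)$ with $\operatorname{Ann}_R(e_\ell)=(x^\ell,y)$.
\item Then $e=(e_\ell)_\ell$ satisfies $\operatorname{Ann}_R(e)=\bigcap_\ell(x^\ell,y)=(y)$. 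So $R/(y)\cong Re\subseteq\D(\H^1_\m(M))$, and $\dim\D(\H^1_\m(M))\ge1$.
\end{itemize}
Hence $S_1\text{-}\!\depth(M)=1<2=\inf_\ell S_1\text{-}\!\depth(\m^\ell)$. The same happens in the graded setting with $M_\ell=\m^\ell(\ell-1)$ over $K[x,y]$. The shifts put the functionals dual to $x^{\ell-1}$ in the common degree $0$, and their tuple has annihilator $(y)$.

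The paper's proof has the same hole. It asserts that the natural map $(\prod_\ell N_\ell)_P\to\prod_\ell(N_\ell)_P$ is injective. In the example this fails at $P=(y)$: every $(N_\ell)_P$ vanishes, yet $(Re)_P\cong(R/(y))_{(y)}\neq0$ sits inside the source.

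What is true is the finite case. For finitely many $M_i$ the product is a finite direct sum and localization commutes with it, so $\Supp\prod_i N_i=\bigcup_i\Supp N_i$. Your comparison in the last paragraph then goes through verbatim, provided the infimum is taken over the nonzero summands, as you observed. With the convention $S_r\text{-}\!\depth(0)=-\infty$, a single zero summand already breaks the formula.
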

    \begin{proof}
    	Since local cohomology commutes with direct limits (\cite[Theorem 3.4.10]{BS}), hence direct sums, we have $\H_\m^j(\bigoplus_{i\in\mathcal{I}}M_i)\cong\bigoplus_{i\in\mathcal{I}}\H_\m^j(M_i)$ for all $j\ge0$. Combining this isomorphism with \cite[Theorem 6.45(ii)]{Rot}, in the local case we have
    	\begin{align*}
    		\D(\H_\m^j(\bigoplus_{i\in\mathcal{I}}M_i))\ &=\ \Hom_R(\H_\m^j(\bigoplus_{i\in\mathcal{I}}M_i),\E_R(K))\\
    		&\cong\ \Hom_R(\bigoplus_{i\in\mathcal{I}}\H_\m^j(M_i),\E_R(K))\\
    		&\cong\ \prod_{i\in\mathcal{I}}\Hom_R(\H_\m^j(M_i),\E_R(K))\\
    		&\cong\ \prod_{i\in\mathcal{I}}\D(\H_\m^j(M_i)).
    	\end{align*}
    
    	When $R$ is a $K$-algebra, replacing $\Hom_R(\underline{\phantom{a}},\E_R(K))$ with $\Hom_K(\underline{\phantom{a}},K)$, the isomorphism $\D(\H_\m^j(\bigoplus_{i\in\mathcal{I}}M_i))\cong\prod_{i\in\mathcal{I}}\D(\H_\m^j(M_i))$ holds as well.
    	
    	We claim that the support of a direct product $\prod_{\ell\in\mathcal{L}}N_\ell$ of $R$-modules is the union of the supports of the $R$-modules $N_\ell$. In fact, for all $P\in\Spec(R)$ we have an injective map $(\prod_{\ell\in\mathcal{L}}N_\ell)_P\rightarrow\prod_{\ell\in\mathcal{L}}(N_\ell)_P$. If this map is non-zero, then $(N_h)_P\ne0$ for some $h$. This shows that $\Supp(\prod_{\ell\in\mathcal{L}}N_\ell)\subset\bigcup_{\ell\in\mathcal{L}}\Supp(N_\ell)$. Conversely, given $P\in\bigcup_{\ell\in\mathcal{L}}\Supp(N_\ell)$, we have $(N_{h})_P\ne0$ for some $h$. Since the projection map $\prod_{\ell\in\mathcal{L}}N_\ell\rightarrow N_h$ is surjective, it follows that $P\in\Supp(\prod_{\ell\in\mathcal{L}}N_\ell)$, as desired.
    	
    	Having acquired our claim, the previous isomorphism yields that
    	$$
    	\dim\D(\H_\m^j(\bigoplus_{i\in\mathcal{I}}M_i))\ =\ \dim(\prod_{i\in\mathcal{I}}\D(\H_\m^j(M_i)))\ =\ \sup_{i\in\mathcal{I}}\{\dim\D(\H_\m^j(M_i))\}.
    	$$
    	Our discussion thus far shows that $S_r\text{-}\!\depth(\bigoplus_{i\in\mathcal{I}}M_i)$ is the least integer $j$ such that $\sup_{i\in\mathcal{I}}\{\dim\D(\H_\m^j(M_i))\}\ge j-r+1$. Let $s_i=S_r\text{-}\!\depth(M_i)$ for all $i\in\mathcal{I}$, and set $s=\inf_{i\in\mathcal{I}}\{s_i\}$. By definition of $s$, for all $j<s$ we have $\dim\D(\H_\m^j(M_i))<j-r+1$, and thus $\dim\D(\H_\m^j(\bigoplus_{i\in\mathcal{I}}M_i))<j-r+1$, too. Since $0\le s_i\le\dim(M_i)\le\dim(R)$ for all $i\in\mathcal{I}$, we have $s=s_p$ for some $p$. Hence
    	$$
    	\dim\D(\H_\m^s(\bigoplus_{i\in\mathcal{I}}M_i))\ =\ \sup_{i\in\mathcal{I}}\{\dim\D(\H_\m^s(M_i))\}\ \ge\ \dim\D(\H_\m^{s_p}(M_p))\ \ge\ s-r+1.
    	$$
    	We conclude that $S_r\text{-}\!\depth(\bigoplus_{i\in\mathcal{I}}M_i)=s$, as desired.
    \end{proof}

    Next, we analyze the behaviour of the Serre depth along short exact sequences.
    \begin{Proposition}\label{prop:shortexact}
    	Let
    	$$
    	0\rightarrow U\rightarrow M\rightarrow N\rightarrow 0
    	$$
    	be a short exact sequence of $R$-modules, which we assume are homogeneous if $R$ is a $K$-algebra. Then,
    	\begin{align}
    		\label{eq:Sr-ses1}S_r\text{-}\!\depth(M)\ &\ge\ \min\{S_r\text{-}\!\depth(U),\,S_r\text{-}\!\depth(N)\},\\
    		\label{eq:Sr-ses2}S_r\text{-}\!\depth(N)\ &\ge\ \min\{S_{r+1}\text{-}\!\depth(U)-1,\,S_r\text{-}\!\depth(M)\},\\
    		\label{eq:Sr-ses3}S_r\text{-}\!\depth(U)\ &\ge\ \min\{S_r\text{-}\!\depth(M),\,S_{r-1}\text{-}\!\depth(N)+1\},
    	\end{align}
        where in \textup{(\ref{eq:Sr-ses3})} we assume that $r\ge2$. Furthermore, the following statements hold.\smallskip
        \begin{enumerate}
        	\item[\textup{(a)}] If $S_r\text{-}\!\depth(U)<S_r\text{-}\!\depth(N)$, then $S_r\text{-}\!\depth(U)=S_r\text{-}\!\depth(M)$.\smallskip
        	\item[\textup{(b)}] If $S_r\text{-}\!\depth(M)<S_{r+1}\text{-}\!\depth(U)-1$, then $S_r\text{-}\!\depth(M)=S_r\text{-}\!\depth(N)$.\smallskip
        	\item[\textup{(c)}] If $S_{r-1}\text{-}\!\depth(N)+1<S_r\text{-}\!\depth(M)$ and $r\ge2$, then
        	$$
        	S_r\text{-}\!\depth(U)\ge S_{r-1}\text{-}\!\depth(N)+1\ge S_r\text{-}\!\depth(N)+1\ge S_{r+1}\text{-}\!\depth(U).
        	$$
        \end{enumerate}
    \end{Proposition}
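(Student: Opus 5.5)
The plan is to apply the long exact sequence of local cohomology
$$
\cdots\rightarrow\H_\m^{j-1}(N)\rightarrow\H_\m^j(U)\rightarrow\H_\m^j(M)\rightarrow\H_\m^j(N)\rightarrow\H_\m^{j+1}(U)\rightarrow\cdots
$$
and then dualize it. In the local case $\D=\Hom_R(-,\E_R(K))$ is exact because $\E_R(K)$ is injective; in the graded case $\Hom_K(-,K)$ is exact. So we obtain an exact sequence
$$
\cdots\rightarrow\D(\H_\m^{j+1}(U))\rightarrow\D(\H_\m^{j}(N))\rightarrow\D(\H_\m^{j}(M))\rightarrow\D(\H_\m^{j}(U))\rightarrow\D(\H_\m^{j-1}(N))\rightarrow\cdots.
$$
From a three-term exact sequence $A\rightarrow B\rightarrow C$ we only need $\Supp(B)\subseteq\Supp(A)\cup\Supp(C)$, which holds by localizing. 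This gives $\dim B\le\max\{\dim A,\dim C\}$, with no finiteness hypotheses required. It also takes care of the convention $\dim 0=-\infty$.

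Each inequality is then a contrapositive check at a single index $j$.

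\textbf{Inequality (\ref{eq:Sr-ses1}).} Let $j<\min\{S_r\text{-}\!\depth(U),S_r\text{-}\!\depth(N)\}$. Then
$$
\dim\D(\H_\m^j(M))\le\max\{\dim\D(\H_\m^j(N)),\dim\D(\H_\m^j(U))\}<j-r+1.
$$
So no $j$ below the minimum can witness $S_r\text{-}\!\depth(M)$.

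\textbf{Inequality (\ref{eq:Sr-ses2}).} Use $\D(\H_\m^{j+1}(U))\rightarrow\D(\H_\m^j(N))\rightarrow\D(\H_\m^j(M))$. If $j<S_{r+1}\text{-}\!\depth(U)-1$, then $j+1<S_{r+1}\text{-}\!\depth(U)$, so
$$
\dim\D(\H_\m^{j+1}(U))<(j+1)-(r+1)+1=j-r+1.
$$
Combined with $j<S_r\text{-}\!\depth(M)$, this gives $\dim\D(\H_\m^j(N))<j-r+1$.

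\textbf{Inequality (\ref{eq:Sr-ses3}).} Use $\D(\H_\m^j(M))\rightarrow\D(\H_\m^j(U))\rightarrow\D(\H_\m^{j-1}(N))$. If $j-1<S_{r-1}\text{-}\!\depth(N)$, then
$$
\dim\D(\H_\m^{j-1}(N))<(j-1)-(r-1)+1=j-r+1,
$$
and the same argument applies. This is where $r\ge2$ is needed, so that $S_{r-1}$ is defined.

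\textbf{Zero modules.} The degenerate cases where some module is $0$ will be handled by the convention $-\infty$. For example, if $U=0$ then $M\cong N$ and everything is trivial. I will remark on this briefly.

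\textbf{Statements (a)--(c).} These follow by the standard ``two of three'' trick, combining the three inequalities.

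For (a), let $S_r\text{-}\!\depth(U)<S_r\text{-}\!\depth(N)$. Inequality (\ref{eq:Sr-ses1}) gives $S_r\text{-}\!\depth(M)\ge S_r\text{-}\!\depth(U)$. For the reverse we cannot use (\ref{eq:Sr-ses3}) directly, since it involves $S_{r-1}$. Instead I will argue at $j=s:=S_r\text{-}\!\depth(U)$ using the segment
$$
\D(\H_\m^s(M))\rightarrow\D(\H_\m^s(U))\rightarrow\D(\H_\m^{s-1}(N)).
$$
Since $s-1<S_r\text{-}\!\depth(N)$, we get $\dim\D(\H_\m^{s-1}(N))<s-r$. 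We also have $\dim\D(\H_\m^s(U))\ge s-r+1$. Hence $\dim\D(\H_\m^s(M))\ge s-r+1$, which gives $S_r\text{-}\!\depth(M)\le s$.

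For (b), set $s=S_r\text{-}\!\depth(M)$, so (\ref{eq:Sr-ses2}) gives $S_r\text{-}\!\depth(N)\ge s$. For the reverse, use the segment
$$
\D(\H_\m^{s}(N))\rightarrow\D(\H_\m^s(M))\rightarrow\D(\H_\m^s(U)).
$$
By (\ref{eq:Sr-ses1}), $s\ge\min\{S_r\text{-}\!\depth(U),S_r\text{-}\!\depth(N)\}$. Here $S_r\text{-}\!\depth(U)\ge S_{r+1}\text{-}\!\depth(U)>s+1$. So $s\ge S_r\text{-}\!\depth(N)$, and therefore $S_r\text{-}\!\depth(N)=s$.

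For (c), the hypothesis plugged into (\ref{eq:Sr-ses3}) gives the first inequality. The middle inequality is the monotonicity in Proposition \ref{Prop:inequalities}(a). For the last inequality, apply (\ref{eq:Sr-ses2}). If $S_r\text{-}\!\depth(N)<S_{r+1}\text{-}\!\depth(U)-1$, then (\ref{eq:Sr-ses2}) forces $S_r\text{-}\!\depth(N)\ge S_r\text{-}\!\depth(M)$. This contradicts
$$
S_r\text{-}\!\depth(M)>S_{r-1}\text{-}\!\depth(N)+1\ge S_r\text{-}\!\depth(N)+1.
$$

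\textbf{Main obstacle.} I expect the delicate point to be the index bookkeeping in (a) and (b): choosing the correct segment of the dual sequence and the correct shift of $r$. The exactness and support arguments themselves are routine.
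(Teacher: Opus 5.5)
Your proof is correct for nonzero modules and essentially follows the paper. You dualize the long exact sequence of local cohomology, bound $\dim$ on three-term segments (the paper's (i)--(iii)), and check (\ref{eq:Sr-ses1})--(\ref{eq:Sr-ses3}) index by index; your parts (b) and (c) match the paper's.

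The one genuine difference is (a). The paper gets $S_r\text{-}\!\depth(U)\ge S_r\text{-}\!\depth(M)$ from (\ref{eq:Sr-ses3}) together with $S_{r-1}\text{-}\!\depth(N)\ge S_r\text{-}\!\depth(N)$. That formally requires $r\ge2$, since for $r=1$ it invokes an undefined $S_0\text{-}\!\depth$, although (a) is stated for all $r$. You instead argue directly at $s=S_r\text{-}\!\depth(U)$ on the segment $\D(\H_\m^s(M))\to\D(\H_\m^s(U))\to\D(\H_\m^{s-1}(N))$. You use the sharper bound $\dim\D(\H_\m^{s-1}(N))<s-r$, which comes from $S_r\text{-}\!\depth(N)$ itself. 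This is self-contained and also covers $r=1$, so on this point your route is slightly cleaner than the paper's.

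One correction concerns zero modules. Your remark that they are ``handled by the convention $-\infty$'', and that $U=0$ makes ``everything trivial'', is wrong. The step ``no $j$ below the bound is a witness, hence $S_r\text{-}\!\depth(X)\ge$ bound'' only works when $X\ne0$. With $S_r\text{-}\!\depth(0)=-\infty$ the statement actually fails in degenerate cases:
\begin{itemize}
\item $U=0$ and $M\cong N\ne0$ violates (\ref{eq:Sr-ses3}) and (a).
\item $N=0$ and $U\cong M\ne0$ violates (\ref{eq:Sr-ses2}) and (c).
\end{itemize}
So you should explicitly assume $U,M,N\ne0$, as the paper's proof tacitly does.
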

    \begin{proof}
    	The given short exact sequence induces a long exact sequence in local cohomology. Applying the exact contravariant functor $\D(\underline{\phantom{a}})$ (see \cite[10.2.1]{BS} in the local case, and \cite[Theorem 3.6.17]{BH} for the graded version) to this long exact sequence, we obtain the long exact sequence
    	$$
    	\cdots\rightarrow\D(\H_\m^{j+1}(U))\rightarrow\D(\H_\m^j(N))\rightarrow\D(\H_\m^j(M))\rightarrow\D(\H_\m^j(U))\rightarrow\D(\H_\m^{j-1}(N))\rightarrow\cdots.
    	$$
    	
    	For a $R$-module $X$ and integers $j\ge0$ and $r\ge1$, we put $d_X^j=\dim\D(\H_\m^j(X))$ and $s_X^r=S_r\text{-}\!\depth(X)$. From the above long exact sequence, we easily derive:
    	\begin{enumerate}
    		\item[(i)] $d_M^j\le\max\{d_U^j,\,d_N^j\}$,
    		\item[(ii)] $d_N^j\le\max\{d_U^{j+1}\!,\,d_M^j\}$,
    		\item[(iii)] $d_U^j\le\max\{d_M^j,\,d_N^{j-1}\}$.
    	\end{enumerate}\smallskip
    
        Now, we establish the inequalities (\ref{eq:Sr-ses1}), (\ref{eq:Sr-ses2}), (\ref{eq:Sr-ses3}), and the statement (a), (b), (c).\smallskip
        
        Let $s=s_M^r$. Then, by (i) $s-r+1\le d_M^s\le\max\{d_U^s,d_N^s\}$. Hence $s_X^r\le s$ for some $X\in\{U,N\}$. In particular, $s_M^r=s\ge\min\{s_U^r,s_N^r\}$ and  (\ref{eq:Sr-ses1}) holds.\smallskip
        
        Let $s=s_N^r$. By (ii), $s-r+1\le d_N^s\le\max\{d_U^{s+1},d_M^s\}$. So, either $d_U^{s+1}\ge(s+1)-(r+1)+1$ or $d_M^s\ge s-r+1$. Hence, either $s_U^{r+1}\le s+1$ or $s_M^r\le s$. Altogether, $s_N^r=s\ge\min\{s_U^{r+1}-1,s_M^r\}$ and (\ref{eq:Sr-ses2}) holds.\smallskip
        
        Now, let $r\ge2$ and put $s=s_U^r$. By (iii), $s-r+1\le d_U^s\le\max\{d_M^s,d_N^{s-1}\}$. Hence, either $d_M^s\ge s-r+1$ or $d_N^{s-1}\ge(s-1)-(r-1)+1$. Thus, either $s_M^r\le s$ or $s_N^{r-1}\le s-1$. This shows that $s_U^r=s\ge\min\{s_M^r,s_N^{r-1}+1\}$ and (\ref{eq:Sr-ses3}) holds.\smallskip
        
        (a) Assume that $s_U^r<s_N^r$. Inequality (\ref{eq:Sr-ses1}) implies that $s_M^r\ge s_U^r$. By (\ref{eq:Sr-ses3}) we have $s_U^r\ge\min\{s_M^r,s_N^{r-1}+1\}$. Suppose for a moment that $s_U^r\ge s_N^{r-1}+1$. By Proposition \ref{Prop:inequalities}(a) we have $s_N^{r-1}+1>s_N^r$, and so $s_U^r>s_N^r$, against the assumption. Hence, we must have $s_U^r\ge s_M^r$, which together with the previous inequality gives $s_U^r=s_M^r$.\smallskip
        
        (b) Assume that $s_M^r<s^{r+1}_U-1$. By (\ref{eq:Sr-ses2}) we have $s_N^r\ge s_M^r$. Now, by (\ref{eq:Sr-ses1}) we have $s_M^r\ge\min\{s_U^r,s_N^r\}$. Suppose for a moment that $s_M^r\ge s_U^r$. By Proposition \ref{Prop:inequalities}(a) we have $s_U^r>s_U^{r+1}-1$, and so $s_M^r>s_U^{r+1}-1$, against our assumption. Hence, $s_M^r\ge s_N^r$, and so $s_M^r=s_N^r$.\smallskip
        
        (c) Finally, assume that $r\ge2$ and $s_{N}^{r-1}+1<s_M^r$. Then (\ref{eq:Sr-ses3}) yields $s_U^r\ge s_N^{r-1}+1$. Next, (\ref{eq:Sr-ses2}) gives that $s_N^r\ge\min\{s_U^{r+1}-1,s_M^r\}$. Suppose for a moment that $s_N^r\ge s_M^r$. Since by Proposition \ref{Prop:inequalities}(a) we have $s_N^{r-1}+1>s_N^r$, it follows that $s_{N}^{r-1}+1>s_M^r$, against the assumption. Hence $s_N^r\ge s_U^{r+1}-1$. Applying Proposition \ref{Prop:inequalities}(a), we then have $s_U^r\ge s_{N}^{r-1}+1\ge s_N^{r}+1\ge s_U^{r+1}$, as desired.
    \end{proof}
    
    As a consequence of this result, we have
    \begin{Corollary}
    	Let $M$ be a $R$-module, which we assume is homogeneous if $R$ is a $K$-algebra. Let ${\bf x}=x_1,\dots,x_\ell$ be a $M$-sequence, which we assume is homogeneous if $R$ is a $K$-algebra. Then
    	$$
    	S_{r+\ell}\text{-}\!\depth(M)-\ell\ \le\ S_r\text{-}\!\depth(M/({\bf x})M)\ \le\ S_r\text{-}\!\depth(M),
    	$$
    	for all $r\ge1$.
    \end{Corollary}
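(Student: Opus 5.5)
Since both inequalities are additive in $\ell$ and ${\bf x}$ is an $M$-sequence, I will argue by induction on $\ell$, with the case $\ell=1$ carrying all the content. Suppose the case $\ell=1$ holds for every module and every element that is regular on it. Put $M'=M/(x_1,\dots,x_{\ell-1})M$. By induction,
$$
S_{r+\ell}\text{-}\!\depth(M)-(\ell-1)\ \le\ S_{r+1}\text{-}\!\depth(M')\ \le\ S_{r+1}\text{-}\!\depth(M)
$$
(apply the induction hypothesis with $r+1$ in place of $r$), and also $S_r\text{-}\!\depth(M')\le S_r\text{-}\!\depth(M)$. Now apply the case $\ell=1$ to $M'$ and the $M'$-regular element $x_\ell$. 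This gives $S_{r+1}\text{-}\!\depth(M')-1\le S_r\text{-}\!\depth(M/({\bf x})M)\le S_r\text{-}\!\depth(M')$. Chaining the inequalities gives both bounds.

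For $\ell=1$, write $x=x_1$. Since $x$ is $M$-regular, multiplication by $x$ gives a short exact sequence
$$
0\rightarrow M\xrightarrow{\ x\ }M\rightarrow M/xM\rightarrow0.
$$
In the graded case this is the twisted sequence $0\rightarrow M(-\deg x)\rightarrow M\rightarrow M/xM\rightarrow 0$. A degree shift does not change the Krull dimensions $\dim\D(\H_\m^j(\,\cdot\,))$, so it does not change Serre depths. Apply Proposition \ref{prop:shortexact} with $U=M$ and the middle term equal to $M$. Inequality (\ref{eq:Sr-ses2}) gives
$$
S_r\text{-}\!\depth(M/xM)\ \ge\ \min\{S_{r+1}\text{-}\!\depth(M)-1,\ S_r\text{-}\!\depth(M)\}.
$$
By Proposition \ref{Prop:inequalities}(a), $S_{r+1}\text{-}\!\depth(M)-1<S_r\text{-}\!\depth(M)$, so the minimum equals $S_{r+1}\text{-}\!\depth(M)-1$. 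This is the lower bound.

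For the upper bound, inequality (\ref{eq:Sr-ses1}) only gives $S_r\text{-}\!\depth(M)\ge\min\{S_r\text{-}\!\depth(M),S_r\text{-}\!\depth(M/xM)\}$, which is vacuous. So I will argue directly with the long exact sequence of dual modules from the proof of Proposition \ref{prop:shortexact}. I expect this step to be the main obstacle.

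Set $s=S_r\text{-}\!\depth(M)$ and $d^j=\dim\D(\H_\m^j(M))$. The portion of the dual sequence I need is
$$
\D(\H_\m^{s+1}(M))\rightarrow\D(\H_\m^s(M/xM))\rightarrow\D(\H_\m^s(M))\xrightarrow{\ x\ }\D(\H_\m^s(M)).
$$
Hence $\D(\H_\m^s(M/xM))$ surjects onto the kernel $(0:_{\D(\H_\m^s(M))}x)$. I want to show this kernel has dimension at least $d^s-1\ge s-r$. Then $S_r\text{-}\!\depth(M/xM)\le s$, because the condition to check is $\dim\D(\H_\m^s(M/xM))\ge s-r+1$ at index $s$.

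That target is off by one: I need $s-r+1$, not $s-r$. So I will use a different part of the sequence. Take instead
$$
\D(\H_\m^{s}(M))\xrightarrow{\ x\ }\D(\H_\m^{s}(M))\rightarrow\D(\H_\m^{s-1}(M/xM)).
$$
The cokernel $N/xN$, with $N=\D(\H_\m^s(M))$, embeds in $\D(\H_\m^{s-1}(M/xM))$. Since $x\in\m$, the module $N/xN$ is nonzero and has dimension at least $\dim N-1\ge s-r$. Here I will first reduce to $R$ complete via Theorem \ref{Thm:completion}, so that $N$ is finitely generated by Matlis duality and Nakayama's lemma and the dimension estimate for $N/xN$ apply. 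This yields $\dim\D(\H_\m^{s-1}(M/xM))\ge(s-1)-r+1$. Hence $S_r\text{-}\!\depth(M/xM)\le s-1\le s$, which gives the upper bound.

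The delicate points are the finite generation needed for Nakayama and the dimension drop by at most one. The completion reduction handles both. In the graded case, graded Matlis duality gives finite generation directly.
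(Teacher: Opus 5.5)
Your proof is correct for finitely generated $M$, and for the upper bound it takes a different route from the paper's. The reduction to $\ell=1$ and the lower bound via (\ref{eq:Sr-ses2}) together with Proposition \ref{Prop:inequalities}(a) are exactly the paper's argument.

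\textbf{The upper bound.} The paper dualizes $0\to\CoKer(\alpha)\to\H_\m^j(M/xM)\to\Ker(\delta)\to0$ and bounds $\dim\D(\H_\m^j(M/xM))$ \emph{from above} by $\max\{\dim\D(\H_\m^j(M)),\dim\D(\H_\m^{j+1}(M))\}$. It then asserts that the upper bound follows. But $S_r\text{-}\!\depth$ is the least $j$ at which $\dim\D(\H_\m^j(\,\cdot\,))$ is large enough. So an upper bound on $\dim\D(\H_\m^j(M/xM))$ can only give a \emph{lower} bound on $S_r\text{-}\!\depth(M/xM)$; it just reproduces $S_r\text{-}\!\depth(M/xM)\ge S_{r+1}\text{-}\!\depth(M)-1$.

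What is actually needed is a \emph{lower} bound on $\dim\D(\H_\m^{j}(M/xM))$ for some $j\le s$, and that is what you supply. You use the term $\CoKer(\D(\delta))=N/xN$ at $j=s-1$, where $N=\D(\H_\m^s(M))$. You combine it with finite generation of $N$ (after completing, or by graded Matlis duality), Nakayama, and $\dim N/xN\ge\dim N-1$. Your argument even gives the sharper bound $S_r\text{-}\!\depth(M/xM)\le S_r\text{-}\!\depth(M)-1$, hence $S_r\text{-}\!\depth(M/({\bf x})M)\le S_r\text{-}\!\depth(M)-\ell$.

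\textbf{Two points to tighten.}

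First, you need $s\ge1$ for $\H_\m^{s-1}$ to make sense. This holds because an $M$-regular $x\in\m$ forces $\H_\m^0(M)=\Gamma_\m(M)=0$, so $\dim\D(\H_\m^0(M))=-\infty$.

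Second, and more substantively, your argument uses that $M$ is finitely generated. Both Theorem \ref{Thm:completion} and the Artinian property of $\H_\m^s(M)$ require it, while the statement is phrased for an arbitrary $R$-module. You should state this hypothesis explicitly. Without it, $N$ need not be finitely generated and the estimate $\dim N/xN\ge\dim N-1$ is not available. The paper's argument does not establish the upper bound in that generality either.
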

    \begin{proof}
    	We proceed by induction on $\ell\ge1$. It is enough to consider the case $\ell=1$. So let $x\in\m$ be a $M$-regular element, which we assume is homogeneous if $R$ is a $K$-algebra. Then, multiplication by $x$ is an injective (homogeneous, in the graded setting) map, and so we have the short exact sequence
    	\begin{equation}\label{eq:multiplication}
    	0\rightarrow M\xrightarrow{\cdot x}M\rightarrow M/(x)M\rightarrow0
        \end{equation}
    	which induces the long exact sequence
    	$$
    	\H_\m^j(M)\xrightarrow{\alpha}\H_\m^j(M)\xrightarrow{\beta}\H_\m^j(M/(x)M)\xrightarrow{\gamma}\H_\m^{j+1}(M)\xrightarrow{\delta}\H_\m^{j+1}(M).
    	$$
    	
    	We claim that we have a short exact sequence
    	\begin{equation}\label{eq:particularsequence}
    	0\rightarrow\CoKer(\alpha)\xrightarrow{f}\H_\m^j(M/(x)M)\xrightarrow{g}\Ker(\delta)\rightarrow0,
    	\end{equation}
    	with $f$ defined by $f(a+\Im(\alpha))=\beta(a)$, for all $a+\Im(\alpha)\in\CoKer(\alpha)=\H_\m^j(M)/\Im(\alpha)$, and $g$ defined by $g(b)=\gamma(b)$, for all $b\in\H_\m^j(M/(x)M)$.
    	
    	First, we note that $f$ is well-defined. In fact, if $a+\Im(\alpha)=a'+\Im(\alpha)$, then $a-a'\in\Im(\alpha)=\Ker(\beta)$, then $\beta(a)=\beta(a')$, and so $f(a+\Im(\alpha))=f(a'+\Im(\alpha))$. It is clear that $f$ is a homomorphism. Moreover, if $f(a+\Im(\alpha))=\beta(a)=0$, then $a\in\Ker(\beta)=\Im(\alpha)$, so that $f$ is injective. By the definition of $f$ we have $\Im(f)=\Im(\beta)=\Ker(\gamma)=\Ker(g)$. Finally, $g$ is surjective, because $\Ker(\delta)=\Im(\gamma)$.
    	
    	Hence, applying the functor $\D(\underline{\phantom{a}})$ to (\ref{eq:particularsequence}), we obtain the short exact sequence
    	$$
    	0\rightarrow \D(\Ker(\delta))\xrightarrow{\D(g)}\D(\H_\m^j(M/(x)M))\xrightarrow{\D(f)}\D(\CoKer(\alpha))\rightarrow0,
    	$$
    	which becomes
    	$$
    	0\rightarrow\CoKer(\D(\delta))\xrightarrow{\D(g)}\D(\H_\m^j(M/(x)M))\xrightarrow{\D(f)}\Ker(\D(\alpha))\rightarrow0.
    	$$
    	It follows that
    	\begin{align*}
    		\dim\D(\H_\m^j(M/(x)M))\ &=\ \max\{\dim\CoKer(\D(\delta)),\dim\Ker(\D(\alpha))\}\\
    		&\le\ \max\{\dim\D(\H_\m^j(M)),\dim\D(\H_\m^{j+1}(M))\},
    	\end{align*}
    	because the Krull dimension of a module is always greater or equal than the Krull dimensions of its submodules and its quotients. It follows immediately that
    	$$
    	S_r\text{-}\!\depth(M/(x)M)\ \le\ \min\{S_r\text{-}\!\depth(M),S_{r-1}\text{-}\!\depth(M)\}\ =\ S_{r}\text{-}\!\depth(M),
    	$$
    	where in the last equality we used Proposition \ref{Prop:inequalities}(a). On the other hand, applying Proposition \ref{prop:shortexact} to the short exact sequence (\ref{eq:multiplication}), we have
    	\begin{align*}
    		S_r\text{-}\!\depth(M/(x)M)\ &\ge\ \min\{S_r\text{-}\!\depth(M),S_{r+1}\text{-}\!\depth(M)-1\}\\
    		&=\ S_{r+1}\text{-}\!\depth(M)-1,
    	\end{align*}
    	where in the second step we used Proposition \ref{Prop:inequalities}(a). The proof is complete.
    \end{proof}

	If we assume that $R$ is Cohen-Macaulay, then there is a tight relation between the Serre depths of an ideal $I\subset R$ and its quotient ring $R/I$.
	\begin{Proposition}\label{Prop:Sr-I-R/I}
		Let $R$ be Cohen-Macaulay with $\dim(R)>0$, and let $I\subset R$ be a non-zero proper ideal, which we assume is homogeneous if $R$ is a $K$-algebra. Then
		$$
		S_r\text{-}\!\depth_R(R/I)+1\ =\ S_{r+1}\text{-}\!\depth_R(I),\quad\text{for all}\ r\ge1.
		$$
	\end{Proposition}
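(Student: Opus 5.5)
We use the short exact sequence $0\rightarrow I\rightarrow R\rightarrow R/I\rightarrow0$ and its long exact sequence in local cohomology. Put $d=\dim(R)>0$. Since $R$ is Cohen-Macaulay, $\H_\m^j(R)=0$ for all $j\ne d$. The connecting maps therefore give isomorphisms $\H_\m^j(R/I)\cong\H_\m^{j+1}(I)$ for all $j\le d-2$, and we have the exact sequence
$$
0\rightarrow\H_\m^{d-1}(R/I)\rightarrow\H_\m^d(I)\rightarrow\H_\m^d(R)\rightarrow\H_\m^d(R/I)\rightarrow0.
$$
Applying the exact functor $\D(\underline{\phantom{a}})$, we get $\D(\H_\m^j(R/I))\cong\D(\H_\m^{j+1}(I))$ for $j\le d-2$. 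In the notation of Proposition \ref{prop:shortexact}, this reads $d_{R/I}^j=d_I^{j+1}$ for $j\le d-2$. The numerical condition defining $S_r$-depth for $R/I$ at index $j$ is $d^j_{R/I}\ge j-r+1$. For $I$ at index $j+1$ with $r+1$ in place of $r$, the condition is $d_I^{j+1}\ge (j+1)-(r+1)+1=j-r+1$. So in the range $j\le d-2$ the two conditions coincide exactly.

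It remains to treat the top indices. For $R/I$ these are $j=d-1$ and $j=d$; for $I$ they are $j+1=d$ (also $j+1\le 0$ trivially, but $\H_\m^0(I)\subset\H_\m^0(R)=0$ since $d>0$, so $I$ has no contribution at index $0$).

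\textbf{Case $j=d$ for $R/I$.} If $I$ has positive height, then $\dim(R/I)<d$ and $\H_\m^d(R/I)=0$. If $\dim(R/I)=d$, we use Proposition \ref{Prop:inequalities}(a): both $S_r$-depth $(R/I)\le d$ and $S_{r+1}$-depth $(I)\le d$. It then suffices to show that the first index where a condition holds is the same for both, among indices $\le d-1$ for $R/I$ and $\le d$ for $I$.

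\textbf{Index $d-1$ versus $d$.} We must compare $d^{d-1}_{R/I}\ge d-r$ with $d^d_I\ge d-r$. The exact sequence dualizes to
$$
0\rightarrow\D(\H_\m^d(R/I))\rightarrow\D(\H_\m^d(R))\rightarrow\D(\H_\m^d(I))\rightarrow\D(\H_\m^{d-1}(R/I))\rightarrow0.
$$
Passing to the completion via Theorem \ref{Thm:completion}, we get $\D(\H_\m^d(R))\cong\omega_R$, and the middle map is $\omega_R\to\Hom(I,\omega_R)$ with image $\omega_{R}/\Hom(R/I,\omega_R)$-type pieces. We know $\dim\D(\H_\m^d(I))\le d$ always. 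We need to show two things:
\begin{enumerate}
\item[(1)] If $d^{d-1}_{R/I}\ge d-r$, then $d^d_I\ge d-r$. This is immediate from surjectivity onto $\D(\H_\m^{d-1}(R/I))$.
\item[(2)] We must show $S_{r+1}$-depth$(I)\le d$ always holds in the right way, i.e. that the first index for $I$ is at most $d$ exactly when the first index for $R/I$ is at most $d-1$ or $\dim(R/I)$ reaches $d$.
\end{enumerate}
Since $I\ne0$ and $R$ is Cohen-Macaulay, $I$ has rank-type support containing some minimal prime of dimension $d$. Hence $\dim\D(\H_\m^d(I))=d\ge d-r$, because $\D(\H_\m^d(I))\cong\Hom(I,\omega_R)$ after completion, and this is nonzero at a minimal prime in $\Supp(I)$. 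So the condition for $I$ always holds at index $d$.

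\textbf{Conclusion.} The first index for $I$ is $1+\min\{j\le d-2 \text{ satisfying the }R/I\text{ condition}\}$ if that set is nonempty, and $d$ otherwise. On the $R/I$ side, if no $j\le d-2$ works, then $S_r$-depth$(R/I)\ge d-1$, and we need it to equal $d-1$. This holds when $\dim(R/I)=d-1$, since then $S_r$-depth $\le d-1$ by Proposition \ref{Prop:inequalities}(a). When $\dim(R/I)=d$, we must check $d^{d-1}_{R/I}\ge d-r$ or else handle index $d$ (which would give $d+1$, impossible).

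\textbf{Main obstacle.} This last point is the hard step. When $\dim(R/I)=d$ and all lower conditions fail, we need the value $d-1$ for $R/I$, which requires proving $\dim\D(\H_\m^{d-1}(R/I))\ge d-r$ or finding that the statement needs the index $d$ to be treated accordingly. I would attack it through the four-term dual sequence and the dimension of the cokernel $\Hom(I,\omega)/\mathrm{im}(\omega)\cong\Ext^1(R/I,\omega)$, localizing at a minimal prime of $I$ of dimension $d$ where $R_P$ is Artinian. I expect this to be where care is needed: possibly the index-$d$ condition for $R/I$ (which always holds, since $\dim\D(\H_\m^d(R/I))=d$) combines with the $d-1$ case to yield the formula.
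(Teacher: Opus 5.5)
\textbf{The unresolved case is where the statement fails.} You correctly locate the difficulty, but the case $\dim(R/I)=d$, which you leave open as the ``main obstacle,'' cannot be closed: the proposition is false there. Your parenthetical ``(which would give $d+1$, impossible)'' has it backwards. The value $S_r\text{-}\!\depth(R/I)=d$ genuinely occurs, and then the identity would force $S_{r+1}\text{-}\!\depth(I)=d+1>\dim(I)$.

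Concretely, take the one-dimensional Cohen--Macaulay standard graded algebra $R=K[x,y]/(xy)$ and $I=(x)$. Then $R/I\cong K[y]$, and $I\cong R/(y)(-1)\cong K[x](-1)$. Both are Cohen--Macaulay of dimension $1$, so $S_r\text{-}\!\depth(R/I)=1=S_{r+1}\text{-}\!\depth(I)$ for all $r$. The same happens for $K[x,y,z]/(xy)$ with $I=(x)$ in dimension $2$. No analysis of the four-term dual sequence or of $\Ext^1_R(R/I,\omega_R)$ can rescue this case.

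\textbf{The missing hypothesis.} The paper's proof gets past this point only by asserting that $I\ne0$ implies $\dim(R/I)\le\dim(R)-1$; Example~\ref{Ex:(i)-(vi)}(iv) likewise claims $\dim(R/I)=0$. A Cohen--Macaulay ring is unmixed, so $\dim(R/I)=\dim(R)$ exactly when $I$ lies in a minimal prime. Hence that assertion holds precisely when $\operatorname{ht}(I)>0$, for example when $R$ is a domain, and this hypothesis should be added to the statement.

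With it, your argument is complete and is essentially the paper's:
\begin{itemize}
\item The isomorphisms $\H_\m^j(R/I)\cong\H_\m^{j+1}(I)$ for $j\le d-2$ match the two defining conditions index by index, and $\H_\m^0(I)=0$.
\item If some $j\le d-2$ works, the two minima differ by exactly $1$.
\item If no $j\le d-2$ works, then $d-1\le S_r\text{-}\!\depth(R/I)\le\dim(R/I)\le d-1$ by Proposition~\ref{Prop:inequalities}(a). Meanwhile $S_{r+1}\text{-}\!\depth(I)=d$: you obtain this from $\dim\D(\H_\m^d(I))=\dim(I)=d$, whereas the paper simply uses $S_{r+1}\text{-}\!\depth(I)\le\dim(I)\le d$.
\end{itemize}
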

    \begin{proof}
    	By Example \ref{Ex:(i)-(vi)}(iv), the statement is immediate when $\dim(R)=1$. So, we assume $\dim(R)\ge2$. Since $R$ is Cohen-Macaulay, \cite[Theorem 3.5.7]{BH} implies that $\H_\m^j(R)=0$ for all $0\le j<\dim(R)$. Hence, the long exact sequence in local cohomology induced by the short exact sequence $0\rightarrow I\rightarrow R\rightarrow R/I\rightarrow0$ implies that
    	\begin{equation}\label{eq:iso-R/I-I}
    		\H_\m^j(R/I)\ \cong\ \H_\m^{j+1}(I),\quad\text{for all}\ j=0,\dots,\dim(R)-2.
    	\end{equation}
    
    	Let $r\ge1$ be an integer and put $s=S_{r+1}\text{-}\!\depth_R(I)$. We distinguish two cases.\smallskip
    	
    	\textbf{Case 1.} Suppose that $s\le\dim(R)-1$. The definition of Serre depth implies that $\dim_R\D_R(\H_\m^j(I))<j-(r+1)+1$ for all $j<s$ and $\dim_R\D_R(\H_\m^s(I))\ge s-(r+1)+1$. The isomorphisms (\ref{eq:iso-R/I-I}) imply that $\dim_R\D_R(\H_\m^j(R/I))<(j+1)-(r+1)+1=j-r+1$ for all $j<s-1$ and $\dim_R\D_R(\H_\m^{s-1}(R/I))\ge s-(r+1)+1=(s-1)-r+1$. This shows that $S_{r}\text{-}\!\depth_R(R/I)=s-1$, as wanted.\smallskip
    	
    	\textbf{Case 2.} Suppose now that $s=\dim(R)$. Then $\dim_R\D_R(\H_\m^j(I))<j-(r+1)+1$ for all $j<\dim(R)$. This implies that $\dim_R\D_R(\H_\m^j(R/I))<(j+1)-(r+1)+1=j-r+1$ for all $j<\dim(R)-1$. Hence $S_r\text{-}\!\depth_R(R/I)\ge\dim(R)-1$. On the other hand, by Proposition \ref{Prop:inequalities}(a) we always have $S_r\text{-}\!\depth_R(R/I)\le\dim_R(R/I)$. Since $I$ is a non-zero ideal, we have $\dim_R(R/I)\le\dim(R)-1$. Hence $S_r\text{-}\!\depth_R(R/I)\le\dim(R)-1$. The desired equality follows.
    \end{proof}

    The previous result does not hold if $R$ is not Cohen-Macaulay.
    \begin{Example}
    	\rm Let $R=K[x,y]/(x^2,xy)$ be a standard graded $K$-algebra. Then $R$ is not Cohen-Macaulay, because $\depth(R)=0<1=\dim(R)$. Let $\m=(x,y)/(x^2,xy)$ be the maximal ideal of $R$. Since $R/\m\cong K$ is Cohen-Macaulay of dimension $0$, we have $S_r\text{-}\!\depth(R/\m)=0$ for all $r\ge1$. Moreover,
    	$$
    	\H_\m^0(\m)\ =\ \Gamma_\m(\m)\ =\ \bigcup_{k\ge0}(0:_{\m}\m^k)\ =\ (x)/(x^2,xy)\ \cong\ K
    	$$
    	and so
    	$\D(\H_\m^0(\m))=\Hom_K(\H_\m^0(\m),K)\cong\Hom_K(K,K)\cong K$. Since this module has finite length, we have $\dim\D(\H_\m^0(\m))=0$ and so $S_r\text{-}\!\depth(\m)=0$ for all $r\ge1$. Finally, $S_r\text{-}\!\depth(R/\m)+1\ne S_{r+1}\text{-}\!\depth(\m)$ for all $r\ge1$.
    \end{Example}
	
	We close this section with a useful equivalent definition of Serre depth in the case that $R$ is a homomorphic image of a Gorenstein ring.
	
	\begin{Proposition}
		Assume that $R=S/I$ is a homomorphic image of a Gorenstein ring $S$. Let $M$ be a finitely generated $R$-module, which we assume is homogeneous if $R$ is a $K$-algebra. Then
		$$
		S_r\text{-}\!\depth_R(M)\ =\ \dim(S)-\max\{i:\ \dim_R\Ext^i_S(M,S)\ge\dim(S)-i-r+1\},
		$$
		for all $r\ge1$.
	\end{Proposition}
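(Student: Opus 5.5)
The plan is to translate the definition through Grothendieck's local duality and then reindex by $i=\dim(S)-j$.

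\textbf{Step 1: the duality isomorphism.} Set $n=\dim(S)$. We may assume $S$ is local with maximal ideal $\n$ lying over $\m$, or standard graded with homogeneous maximal ideal $\n$ and $I$ homogeneous. Since $M$ is an $S$-module annihilated by $I$, independence of base for local cohomology gives $\H_\m^j(M)\cong\H_\n^j(M)$. Moreover $\E_R(K)\cong\Hom_S(R,\E_S(K))$, so
$$
\D_R(\H_\m^j(M))\ \cong\ \D_S(\H_\n^j(M)),
$$
exactly as used in the proof of Proposition \ref{Prop:inequalities}, via \cite[Exercise 3.5.14]{BH} in the local case. In the graded case both sides are simply $\Hom_K(\H_\m^j(M),K)$. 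Since $S$ is Gorenstein, $\omega_S\cong S$, up to a degree shift in the graded case, which does not affect dimension.

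\textbf{Step 2: completion in the local case.} Grothendieck's local duality for $S$ requires $S$ to be complete, or at least works after completion. Applying $\widehat{\ }$ throughout, $\widehat{S}$ is Gorenstein of the same dimension, and $\Ext^i_S(M,S)\otimes_S\widehat S\cong\Ext^i_{\widehat S}(\widehat M,\widehat S)$. Dimensions of finitely generated modules are preserved by completion \cite[Corollary 2.1.8]{BH}. By Theorem \ref{Thm:completion}, $S_r\text{-}\!\depth_R(M)=S_r\text{-}\!\depth_{\widehat R}(\widehat M)$, and $\widehat R=\widehat S/I\widehat S$. So we may assume $S$ and $R$ are complete.

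Local duality then gives
$$
\D_S(\H_\n^j(M))\ \cong\ \Ext_S^{n-j}(M,S)\otimes_S\widehat S\ =\ \Ext_S^{n-j}(M,S).
$$
In the graded case this is \cite[Theorem 3.6.19]{BH}. Each such module is a finitely generated $S$-module annihilated by $I$, hence an $R$-module, and its dimension is the same computed over $R$ or over $S$.

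\textbf{Step 3: reindexing.} Substitute into Definition \ref{Def:SerreDepth}:
$$
S_r\text{-}\!\depth_R(M)=\min\{j:\ \dim_R\Ext^{n-j}_S(M,S)\ge j-r+1\}.
$$
Put $i=n-j$, so the condition becomes $\dim_R\Ext^i_S(M,S)\ge n-i-r+1$, and minimizing $j$ is the same as maximizing $i$. This yields the stated formula.

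The range of $i$ needs a check. For $j<0$ or $j>\dim M$ the local cohomology vanishes, and for $i>n$ or $i<0$ the Ext vanishes; in both cases the dimension is $-\infty$ and the condition fails. Because $M\neq0$, the set of valid $i$ is nonempty: Proposition \ref{Prop:inequalities}(a) shows $S_r\text{-}\!\depth(M)$ is a finite $j\le\dim M$.

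\textbf{Main obstacle.} The delicate point is Step 2 in the local case. One must make sure that replacing $S$ by $\widehat S$ changes neither side. For the right-hand side this follows from flat base change of Ext together with invariance of dimension under completion. For the left-hand side this is exactly Theorem \ref{Thm:completion}.
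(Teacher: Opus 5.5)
Your proof is correct and has the same skeleton as the paper's: local duality identifies $\D_R(\H^j_\m(M))$ with $\Ext^{n-j}_S(M,S)$, and then you substitute $i=n-j$. The difference is in how you handle a non-complete local $R$.

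\textbf{Your route.} You first pass to completions. You use Theorem~\ref{Thm:completion} for the left-hand side. For the right-hand side you use flat base change $\Ext^i_S(M,S)\otimes_S\widehat S\cong\Ext^i_{\widehat S}(\widehat{M},\widehat S)$ together with $\widehat R=\widehat S/I\widehat S$. Duality over the complete ring $\widehat S$ then gives an honest isomorphism.

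\textbf{The paper's route.} The paper skips this reduction. Contrary to your remark, local duality $\H^j_\n(M)\cong\Hom_S(\Ext^{n-j}_S(M,S),\E_S(K))$ over a Gorenstein local ring does not require completeness. Combined with partial Matlis duality, it gives directly $\D_R(\H^j_\m(M))\cong\Ext^{n-j}_S(M,S)\otimes_R\widehat R$. Faithful flatness of $\widehat R$ then shows that both modules have the same $R$-support, hence the same dimension.

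\textbf{Comparison.} The paper's argument does not depend on Theorem~\ref{Thm:completion}. It also yields equality of supports, not just of dimensions, and this is reused in Lemma~\ref{Lem:serre2}. Your argument is somewhat longer but entirely adequate for this statement. In the graded case you allow any graded Gorenstein $S$ with $\omega_S\cong S(a)$. This is slightly more general than the paper, which simply takes $S$ to be a polynomial ring.

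\textbf{Caveat.} ``We may assume $S$ is local'' is not a genuine reduction. Localizing $S$ at the preimage of $\m$ leaves the modules $\Ext^i_S(M,S)$ unchanged, but it may lower $\dim(S)$. The formula really fails for non-local $S$. For example, take $R=K[[x]]$ as a quotient of the Gorenstein ring $K[[x]]\times K[[y,z,w]]$, which has dimension $3$. Then $R$ is projective over $S$, so only $\Ext^0_S(R,S)\cong R$ is non-zero. For $r=1$ no $i$ satisfies the condition, although $S_1\text{-}\!\depth(R)=1$. So locality of $S$ (respectively, $\n$ being the homogeneous maximal ideal in the graded case) is an implicit hypothesis of the statement. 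The paper adopts it tacitly as well; you should state it as a hypothesis rather than as a reduction.
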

	\begin{proof}
		Let $n=\dim(S)$. In the graded case, the assumption is vacuous. In fact, $R\cong S/I$ where $S=K[x_1,\dots,x_n]$ is the standard graded polynomial ring (which is Gorenstein) with homogeneous maximal ideal $\n=(x_1,\dots,x_n)$, and $I\subset S$ is a homogeneous ideal. We trivially have $\H_\m^j(M)\cong\H_\n^j(M)$. Hence
		$$
		\D_R(\H_\m^j(M))\ =\ \Hom_K(\H_\m^j(M),K)\ \cong\ \Hom_K(\H_\n^j(M),K)\ =\ \D_S(\H_\n^j(M)).
		$$
		Since standard graded $K$-algebras are $*$-complete, and $\omega_S\cong S$, the graded version of Grothendieck's local duality \cite[Theorem 3.6.19(c)]{BH} gives
		\begin{equation}\label{eq:iso-graded-D}
		\D_R(\H_\m^j(M))\ \cong\ \D_S(\H_\n^j(M))\ \cong\ \Ext^{n-j}_S(M,S).
		\end{equation}
		
		In the local case, denoting again by $\n$ the maximal ideal of $S$, Grothendieck's local duality \cite[Theorem 11.2.6]{BS} gives $\H_\m^j(M)\cong \D_R(\Ext^{n-j}_S(M,S))$. Applying partial Matlis duality (see \cite[Theorem 10.2.19(ii)]{BS}) we have
		\begin{equation}\label{eq:iso-local-D}
		\D_R(\H_\m^j(M))\ \cong\ \Ext^{n-j}_S(M,S)\otimes_R\widehat{R}. 
		\end{equation}
		
		We claim that
		\begin{equation}\label{eq:dimR-hatR}
		\dim_R\D_R(\H_\m^j(M))\ =\ \dim_R\Ext_S^{n-j}(M,S).
		\end{equation}
	
		In the graded case, this follows from (\ref{eq:iso-graded-D}). In the local case, this follows from (\ref{eq:iso-local-D}) together with \cite[Corollary 2.1.8]{BH}. Having acquired equation (\ref{eq:dimR-hatR}), we obtain that
		\begin{align*}
			S_r\text{-}\!\depth(M)\ &=\ \min\{n-i:\ \dim_R\D_R(\H_{\m}^{n-i}(M))\ge (n-i)-r+1\}\\
			&=\ \min\{n-i:\ \dim_R\Ext^{n-(n-i)}_S(M,S)\ge (n-i)-r+1\}\\
			&=\ n-\max\{i:\ \dim_R\Ext^i_S(M,S)\ge n-i-r+1\},
		\end{align*}
		as desired.
	\end{proof}

    \section{Invariance under completion}\label{sec2}
    
    The goal of this short section is to prove Theorem \ref{ThmC} (that is, Theorem \ref{Thm:completion}). We begin with the following elementary observation.
    \begin{Remark}\label{Rem:Mat}
    	Let $M$ be a finitely generated $\widehat{R}$-module. First, note that $M$ can be regarded as a $R$-module by restriction of scalars via the inclusion map $R\rightarrow\widehat{R}$. Since $\widehat{R}$ is $\m$-adically complete, so is $M$. That is, $M\cong\widehat{\!M\!}$. Since $M$ is also finitely generated as $\widehat{R}$-module, it follows immediately from \cite[Theorem 8.7]{Mat} that $\widehat{\!M\!}\cong M\otimes_R\widehat{R}$, and therefore
    	$$
    	M\ \cong\ M\otimes_R\widehat{R}.
    	$$
    \end{Remark}
    
    For the proof of Theorem \ref{Thm:completion}, we need the following lemma.
    \begin{Lemma}\label{Lem:changeofrings}
    	Let $\varphi:(R,\m)\rightarrow(S,\n)$ be a homomorphism of Noetherian local rings, let $M$ be a $R$-module and let $N$ be a faithfully flat $R$-module which is also a finitely generated $S$-module. Then
    	$$
    	\dim_S(M\otimes_RN)\ =\ \dim_R(M)+\dim_S(N/\m N).
    	$$
    	In particular, if $\varphi$ is faithfully flat, then
    	$$
    	\dim_S(M\otimes_RS)\ =\ \dim_R(M)+\dim(S/\m S).
    	$$
    \end{Lemma}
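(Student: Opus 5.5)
The plan is to reduce, in three steps, to the classical dimension formula for flat local homomorphisms (\cite[Theorem 15.1]{Mat}, or \cite[Theorem A.11]{BH}), which we then check in the module version needed here. Throughout we use that $N$ is flat over $R$, so $-\otimes_RN$ is exact and commutes with direct limits. We also use that $N$ is faithfully flat, so $\m N\ne N$ and hence $N/\m N\ne0$. The convention $\dim(0)=-\infty$ makes the formula trivially true when $M=0$.

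\emph{Step 1: reduction to finitely generated $M$.} Write $M=\varinjlim M_\lambda$ as the directed union of its finitely generated submodules. By flatness, $M_\lambda\otimes_RN\subset M\otimes_RN$, and $M\otimes_RN=\bigcup_\lambda M_\lambda\otimes_RN$. Localization commutes with directed unions, so
$$\Supp_S(M\otimes_RN)=\bigcup_\lambda\Supp_S(M_\lambda\otimes_RN),$$
and likewise $\Supp_R(M)=\bigcup_\lambda\Supp_R(M_\lambda)$. Taking suprema, it suffices to prove the formula for each $M_\lambda$; note that everything is bounded by $\dim(S)<\infty$.

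\emph{Step 2: reduction to $M=R/P$.} For finitely generated $M$, take a prime filtration $0=M_0\subset\cdots\subset M_t=M$ with $M_i/M_{i-1}\cong R/P_i$. The primes $P_i$ lie in $\Supp(M)$ and include all minimal primes of $M$, so $\dim M=\max_i\dim R/P_i$. Tensoring with the flat module $N$ gives a filtration of $M\otimes_RN$ with quotients $N/P_iN$. Since supports are additive along short exact sequences, $\dim_S(M\otimes_RN)=\max_i\dim_S(N/P_iN)$. Hence it suffices to show
$$\dim_S(N/PN)=\dim(R/P)+\dim_S(N/\m N).$$
Replacing $R$ by $R/P$, $S$ by $S/PS$ and $N$ by $N/PN$ (which is still faithfully flat over $R/P$, finitely generated over $S/PS$, and has the same fibre $N/\m N$), we may assume $R$ is a domain and $M=R$. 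We must then prove $\dim_SN=\dim R+\dim_S(N/\m N)$.

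\emph{Step 3: the two inequalities.} For ``$\le$'', let $x_1,\dots,x_d$ be a system of parameters of $R$, with $d=\dim R$. Then $\m^k\subset(x)$ for some $k$, so $N/(x)N$ and $N/\m N$ have the same support. Since $N$ is a finitely generated $S$-module, each $\varphi(x_i)$ cuts the dimension by at most one, giving $\dim_SN\le d+\dim_S(N/\m N)$.

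For ``$\ge$'', I would use going-down for flat modules. Choose $Q\in\Supp_S(N/\m N)$ with $\dim S/Q=\dim_S(N/\m N)$; then $Q\cap R=\m$. Take a chain $\m=\mathfrak p_0\supsetneq\cdots\supsetneq\mathfrak p_d$ in $R$. Inductively, given $Q_i\in\Supp_S N$ lying over $\mathfrak p_i$, consider $N_{Q_i}$, which is flat over $R_{\mathfrak p_i}$ and nonzero. Then $N_{Q_i}/\mathfrak p_{i+1}N_{Q_i}$ is nonzero, by Nakayama applied to the nonzero module $N_{Q_i}/\mathfrak p_iN_{Q_i}$. A minimal prime $Q_{i+1}\subset Q_i$ of its support contracts to $\mathfrak p_{i+1}$: flatness over $R_{\mathfrak p_i}/\mathfrak p_{i+1}$, a domain, makes nonzero elements of $R$ outside $\mathfrak p_{i+1}$ nonzerodivisors, so associated primes contract to $\mathfrak p_{i+1}$. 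This yields a chain of length $d$ in $\Supp_S N$ below $Q$. Concatenating it with a maximal chain above $Q$ gives $\dim_SN\ge d+\dim S/Q$.

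This going-down step is the main point requiring care; the rest is formal. The ``in particular'' clause follows by taking $N=S$.
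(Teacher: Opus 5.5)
Your proposal is correct and follows the paper's route: Step 1 is exactly the paper's reduction to finitely generated $M$ via directed unions (supports, and hence dimensions, are suprema over the $M_\lambda$, and $-\otimes_RN$ commutes with the limit), after which the paper simply cites \cite[Theorem A.11(b)]{BH} for the finitely generated case. Your Steps 2--3 reprove that cited result by the standard prime-filtration, system-of-parameters and going-down argument, and this is sound; two small remarks: the Nakayama step is really applied to $N_{Q_i}$, giving $N_{Q_i}\ne Q_iN_{Q_i}\supseteq\mathfrak{p}_iN_{Q_i}$, and the reduction to a domain is not actually used in Step 3.
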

    \begin{proof}
    	If $M$ is finitely generated, then the statement is just \cite[Theorem A.11(b)]{BH}. Suppose that $M$ is not finitely generated. It is known that $M$ is the direct limit of its finitely generated submodules (see \cite[Example 6.157(iii)]{Rot}). Say $M\cong\lim\limits_{\substack{\longrightarrow\\ \ell}}M_\ell$.
    	
    	Direct limits commute with localizations. Indeed, direct limits commute with tensor products \cite[Theorem 6.159]{Rot}, and the localization of a $R$-module $N$ at a prime $P$ is $N_P=N\otimes_RR_P$. Thus $M_P\cong(\lim\limits_{\substack{\longrightarrow\\ \ell}}M_\ell)_P\cong\lim\limits_{\substack{\longrightarrow\\ \ell}}(M_\ell)_P$. We claim that
    	$$
    	\Supp(M)\ =\ \bigcup_\ell\Supp(M_\ell).
    	$$
    	
    	Let $P\in\Supp(M_h)$. Since $M_h\subset M$, then $M_P\ne0$ and $P\in\Supp(M)$. Conversely, if $P\in\Supp(M)$, then $\lim\limits_{\substack{\longrightarrow\\ \ell}}(M_\ell)_P\ne0$ and so $P\in\Supp(M_h)$ for some $h$.
    	
    	Hence $\dim_R(M)=\sup_\ell\{\dim_R(M_\ell)\}$. Using again \cite[Theorem 6.159]{Rot} and the finitely generated case, we have
    	\begin{align*}
    		\dim_S(M\otimes_RN)\ &=\ \dim_S(\lim\limits_{\substack{\longrightarrow\\ \ell}}(M_\ell\otimes_RN))\ =\ \sup_\ell\{\dim_S(M_\ell\otimes_RN)\}\\
    		&=\ \sup_\ell\{\dim_R(M_\ell)+\dim_S(N/\m N)\}\\
    		&=\ \sup_\ell\{\dim_R(M_\ell)\}+ \dim_S(N/\m N)\\
    		&=\ \dim_R(M)+\dim_S(N/\m N),
    	\end{align*}
    	as desired.
    \end{proof}

    Now, we are in the position to prove Theorem \ref{Thm:completion}.
    \begin{proof}[Proof of Theorem \ref{Thm:completion}]
    	Taking into account the definition of Serre depth, it is enough to show that
    	\begin{equation}\label{eq:dim-enough}
    		\dim_{\widehat{R}}\D_{\widehat{R}}(\H_{\widehat{\m}}^j(\,\widehat{\!M\!}\,))\ =\ \dim_R\D_R(\H_\m^j(M)),\quad\text{for all}\ j\ge0.
    	\end{equation}
        
    	By \cite[Proposition 3.5.4(d)]{BH}, we have the isomorphisms of $R$-modules
    	\begin{equation}\label{eq:iso-H}
    		\H_\m^j(M)\ \cong\ \H_\m^j(M)\otimes_R\widehat{R}\ \cong\ \H^j_{\widehat{\m}}(\,\widehat{\!M\!}\,),\quad\text{for all}\ j\ge0.
    	\end{equation}
    
    	By \cite[Theorem 18.6(iii)]{Mat} (see also \cite[Exercise 3.2.14]{BH}) there is a canonical isomorphism of $R$-modules
    	\begin{equation}\label{eq:iso-E}
    		\E_R(K)\ \cong\ \E_R(K)\otimes_R\widehat{R}\ \cong\ \E_{\widehat{R}}(K).
    	\end{equation}
    	
    	Next, using the isomorphisms (\ref{eq:iso-H}) and the fact that $\widehat{R}$ is a $(R,\widehat{R})$-bimodule, the tensor-hom adjunction  (see \cite[Theorem 6.134]{Rot})  implies that
    	\begin{equation}\label{eq:iso-HomTen}
    		\begin{aligned}
    			\D_{\widehat{R}}(\H_{\widehat{\m}}^j(\,\widehat{\!M\!}\,))\ &=\ \Hom_{\widehat{R}}(\H_{\widehat{\m}}^j(\,\widehat{\!M\!}\,),\E_{\widehat{R}}(K))\\
    			&\cong\ \Hom_{\widehat{R}}(\H_{\m}^j(M)\otimes_R\widehat{R},\E_{\widehat{R}}(K))\\
    			&\cong\ \Hom_R(\H_\m^j(M),\Hom_{\widehat{R}}(\widehat{R},\E_{\widehat{R}}(K))),
    		\end{aligned}
    	\end{equation}
    	for all $j\ge0$.
    	
    	Using that $\Hom_{\widehat{R}}(\widehat{R},\E_{\widehat{R}}(K))\cong\E_{\widehat{R}}(K)$ and since by equation (\ref{eq:iso-H}) we have the isomorphism $\E_R(K)\cong\E_{\widehat{R}}(K)$, the above formula (\ref{eq:iso-HomTen}) implies that
    	\begin{equation}\label{eq:Dhom}
    		\D_{\widehat{R}}(\H_{\widehat{\m}}^j(\,\widehat{\!M\!}\,))\ \cong\ \Hom_R(\H_\m^j(M),\E_{R}(K))\ =\ \D_R(\H_\m^j(M))
    	\end{equation}
    	as $R$-modules, for all $j\ge0$.
    	
    	Since the inclusion map $(R,\m)\rightarrow(\widehat{R},\widehat{\m})$ is a faithfully flat homomorphism of Noetherian local rings (see \cite[Theorem 8.8]{Mat}), Lemma \ref{Lem:changeofrings} yields that
    	\begin{equation}\label{eq:dim-map}
    		\dim_{\widehat{R}}(\D_{R}(\H_{\m}^j(M))\otimes_R\widehat{R})\ =\ \dim_R\D_R(\H_\m^j(M))+\dim(\widehat{R}/\m\widehat{R}).
    	\end{equation}
    	
    	Using that $\widehat{R}/\m\widehat{R}=\widehat{R}/\widehat{\m}\cong K$ is a field, we have $\dim(\widehat{R}/\m\widehat{R})=0$. Thus, to conclude the proof, it is enough to show that
    	$$
    	\D_{R}(\H_{\m}^j(M))\otimes_R\widehat{R}\ \cong\ \D_{\widehat{R}}(\H_{\widehat{\m}}^j(\,\widehat{\!M\!}\,)).
        $$
    	
    	By (\ref{eq:Dhom}), this is equivalent to
    	$$
    	\D_{\widehat{R}}(\H_{\widehat{\m}}^j(\,\widehat{\!M\!}\,))\ \cong\ \D_{\widehat{R}}(\H_{\widehat{\m}}^j(\,\widehat{\!M\!}\,))\otimes_{R}\widehat{R}.
    	$$
    	
    	This isomorphism follows from Remark \ref{Rem:Mat} because $\D_{\widehat{R}}(\H_{\widehat{\m}}^j(\,\widehat{\!M\!}\,))$ is a finitely generated $\widehat{R}$-module. In fact, since $M$ is finitely generated as a $R$-module, then $\H_\m^j(M)$ is Artinian as a $R$-module (see \cite[Proposition 3.5.4(a)]{BH}). Equation (\ref{eq:iso-H}) implies that $\H_{\widehat{\m}}^j(\,\widehat{\!M\!}\,)$ is an Artinian $\widehat{R}$-module as well. Finally, since $\widehat{R}$ is complete, $\D_{\widehat{R}}(\H_{\widehat{\m}}^j(\,\widehat{\!M\!}\,))$ is a finitely generated $\widehat{R}$-module (see \cite[Theorem 3.2.12]{BH}), as desired.
    \end{proof}

    The argument of the proof gives the following additional information.
    \begin{Corollary}
    	Let $(R,\m)$ be a local ring, and $M$ be a finitely generated $R$-module. Then
    	$$\D_R(\H_\m^j(M))\ \cong\ \D_R(\H_\m^j(M))\otimes_R\widehat{R}\ \cong\ \D_{\widehat{R}}(\H_{\widehat{\m}}^j(\,\widehat{\!M\!}\,)).$$
    \end{Corollary}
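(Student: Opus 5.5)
The corollary records, for a local ring $(R,\m)$ and a finitely generated $R$-module $M$, the chain
$$\D_R(\H_\m^j(M))\ \cong\ \D_R(\H_\m^j(M))\otimes_R\widehat{R}\ \cong\ \D_{\widehat{R}}(\H_{\widehat{\m}}^j(\,\widehat{\!M\!}\,)).$$
I will assemble it from the isomorphisms produced in the proof of Theorem \ref{Thm:completion}. I would proceed in three steps, in the following order.

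\textbf{Step 1: the outer isomorphism.} First I establish $\D_R(\H_\m^j(M))\cong\D_{\widehat{R}}(\H_{\widehat{\m}}^j(\,\widehat{\!M\!}\,))$ as $R$-modules, exactly as in (\ref{eq:Dhom}). The argument runs as follows:
\begin{itemize}
\item identify $\H^j_{\widehat{\m}}(\,\widehat{\!M\!}\,)$ with $\H_\m^j(M)\otimes_R\widehat{R}$ via (\ref{eq:iso-H});
\item apply tensor--hom adjunction to rewrite $\Hom_{\widehat{R}}(\H_\m^j(M)\otimes_R\widehat{R},\E_{\widehat{R}}(K))$ as $\Hom_R(\H_\m^j(M),\E_{\widehat{R}}(K))$;
\item use $\E_{\widehat{R}}(K)\cong\E_R(K)$ from (\ref{eq:iso-E}).
\end{itemize}
Because the adjunction is natural and $\widehat{R}$ is an $(R,\widehat{R})$-bimodule, the resulting isomorphism is in fact $\widehat{R}$-linear. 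So $\D_R(\H_\m^j(M))$ carries a natural $\widehat{R}$-module structure agreeing with that of the right-hand side.

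\textbf{Step 2: finite generation over $\widehat{R}$.} Next I observe that $N:=\D_{\widehat{R}}(\H_{\widehat{\m}}^j(\,\widehat{\!M\!}\,))$ is a finitely generated $\widehat{R}$-module. The module $\H_\m^j(M)$ is Artinian by \cite[Proposition 3.5.4(a)]{BH}, so by (\ref{eq:iso-H}) $\H_{\widehat{\m}}^j(\,\widehat{\!M\!}\,)$ is Artinian over $\widehat{R}$. Matlis duality over the complete ring $\widehat{R}$ (\cite[Theorem 3.2.12]{BH}) then makes its dual Noetherian.

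\textbf{Step 3: the middle isomorphism.} Remark \ref{Rem:Mat} applied to $N$ gives $N\cong N\otimes_R\widehat{R}$. Substituting $N\cong\D_R(\H_\m^j(M))$ from Step 1, and using that tensoring with $\widehat{R}$ over $R$ preserves isomorphisms of $R$-modules, I get
$$\D_R(\H_\m^j(M))\otimes_R\widehat{R}\ \cong\ N\otimes_R\widehat{R}\ \cong\ N\ \cong\ \D_R(\H_\m^j(M)).$$
Combining this with Step 1 yields the full chain.

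The only point needing care is Step 3. Remark \ref{Rem:Mat} is stated for finitely generated $\widehat{R}$-modules viewed as $R$-modules, so I must apply it to $N$, which Step 2 shows is finitely generated over $\widehat{R}$. I should not apply it to $\D_R(\H_\m^j(M))$ directly, since that module need not be finitely generated over $R$. Transporting the conclusion back through Step 1 is then legitimate, because Step 1 is an isomorphism of $R$-modules.
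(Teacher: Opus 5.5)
Your Steps 1 and 2 are correct and match the paper's argument: Step 1 is the isomorphism (\ref{eq:Dhom}), and Step 2 is the finite generation of $N$ over $\widehat{R}$. The gap is in Step 3, and it cannot be closed, because Remark~\ref{Rem:Mat} is false. The paper's own argument relies on the same remark. Its justification is \cite[Theorem 8.7]{Mat}, which requires the module to be finitely generated over $R$; finite generation over $\widehat{R}$ is not enough. Your closing caveat identifies the problem but does not escape it. By Step 1, $N\cong\D_R(\H_\m^j(M))$ as $R$-modules, so $N$ is finitely generated over $R$ exactly when $\D_R(\H_\m^j(M))$ is. Applying the Remark to $N$ instead of $\D_R(\H_\m^j(M))$ therefore changes nothing.

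In fact the first isomorphism of the statement fails. Let $R=K[x]_{(x)}$, so $\widehat{R}=K[[x]]$, and take $M=R$, $j=1$.
\begin{itemize}
\item Since $\H_\m^1(R)\cong\E_R(K)$, we get $\D_R(\H_\m^1(R))\cong\Hom_R(\E_R(K),\E_R(K))\cong\widehat{R}$ by \cite[Theorem 3.2.12]{BH}. So the middle term is $T=\widehat{R}\otimes_R\widehat{R}$.
\item Let $J$ be the kernel of the multiplication map $\mu\colon T\to\widehat{R}$. Tensoring with the flat $R$-module $K(x)$ turns $\mu$ into the multiplication map $K((x))\otimes_{K(x)}K((x))\to K((x))$. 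Its kernel contains $f\otimes1-1\otimes f\neq0$ for any $f\in K((x))\setminus K(x)$, e.g.\ $f=\sum_n x^{n!}$. Hence $J\neq0$.
\item On the other hand, $T/x^nT\cong\widehat{R}/x^n\widehat{R}\otimes_{R/x^nR}\widehat{R}/x^n\widehat{R}\cong R/x^nR$, and this isomorphism is induced by $\mu$. Hence $J\subseteq\bigcap_n x^nT$.
\item Since $\bigcap_n x^n\widehat{R}=0$, an $R$-isomorphism $T\to\widehat{R}$ would have to kill $J$. So $T\not\cong\widehat{R}$, not even as $R$-modules.
\end{itemize}

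Thus only the outer isomorphism $\D_R(\H_\m^j(M))\cong\D_{\widehat{R}}(\H_{\widehat{\m}}^j(\,\widehat{\!M\!}\,))$, which is your Step 1, holds in general. The middle term agrees with it only under extra hypotheses, for instance if $R$ is complete or $\D_R(\H_\m^j(M))$ has finite length. The same step is used in the proof of Theorem~\ref{Thm:completion}, to identify $\D_R(\H_\m^j(M))\otimes_R\widehat{R}$ with $N$ before comparing dimensions. That comparison therefore also needs a different justification.
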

    
    \section{Serre's conditions}\label{sec3}
    
    In this section, we relate the Serre depth to the famous Serre's conditions. The main result (Theorem \ref{ThmA}) improves several scattered results in the literature, see, for instance: Schenzel (\cite[Lemma 1.9(d)]{Sc2} and \cite[Lemma 3.2.1]{Sc1}), Vasconcelos (\cite[Proposition 3.51]{V}), and Dao, Ma and Varbaro \cite[Proposition 2.11]{DMV}.
    
    As in the previous section, we assume that $(R,\m)$ be either a Noetherian local ring or a standard graded $K$-algebra with unique (homogeneous, if $R$ is a $K$-algebra) maximal ideal $\m$ and residue class field $R/\m\cong K$. Let $r\ge1$ be a positive integer. Recall that a $R$-module $M$ is said to satisfy the $r$-th \textit{Serre's condition} $(S_r)$ if
    $$
    \depth_{R_P}(M_P)\ \ge\ \min\{r,\,\dim_{R_P}(M_P)\},\quad\text{for all}\ P\in\Supp(M).
    $$
    
    For instance, $M$ satisfies $(S_1)$ if and only if $M$ does not have embedded associated primes. Moreover, $M$ satisfies $(S_r)$ for all $r\ge1$ if and only if $M$ is Cohen-Macaulay.\smallskip
    
    Recall that a $R$-module $M$ is called \textit{unmixed} if for all associated prime ideals $P\in\Ass_R(M)$ we have $\dim(R/P)=\dim_R(M)$. Whereas, we say that $M$ is \textit{equidimensional} if $\dim_R(M)=\dim(R/P)$ for all minimal associated primes $P$ of $M$. Clearly, unmixed modules are equidimensional. Note that an equidimensional module satisfying $(S_1)$ is unmixed.\smallskip
    
    Next, we prove a part of Theorem \ref{ThmA}.
    \begin{Theorem}\label{Thm:Serre}
    	Let $(R,\m)$ be a Noetherian local ring or a standard graded $K$-algebra. Assume that $R$ is a homomorphic image of a Gorenstein ring. Let $M$ be a finitely generated $R$-module, which we assume is homogeneous if $R$ is a $K$-algebra. Then, the following conditions are equivalent.
    	\begin{enumerate}
    		\item[\textup{(a)}] $M$ is equidimensional and satisfies Serre's condition $(S_r)$.
    		\item[\textup{(b)}] $S_r\text{-}\!\depth(M)=\dim(M)$.
    	\end{enumerate}
    \end{Theorem}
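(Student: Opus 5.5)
We reduce to the classical Ext-criterion via local duality and then localize prime by prime. We may assume $M\neq0$; set $d=\dim(M)$. Write $R=S/I$ with $S$ Gorenstein of dimension $n$; in the graded case $S$ is a polynomial ring. By the last proposition of Section \ref{sec1}, more precisely equation (\ref{eq:dimR-hatR}), we have $\dim_R\D_R(\H_\m^j(M))=\dim_R\Ext^{n-j}_S(M,S)$. By Proposition \ref{Prop:inequalities}(a) we always have $S_r\text{-}\!\depth(M)\le d$. Hence (b) is equivalent to
$$
\dim\Ext^{n-j}_S(M,S)\ \le\ j-r\quad\text{for all}\ 0\le j<d. \tag{$\ast$}
$$
So the task is to show that $(\ast)$ is equivalent to (a). This is a pointwise statement on $\Supp(M)$.

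\textbf{Translation to primes.} Fix $P\in\Supp(M)$, let $Q$ be its preimage in $S$, and set $h=\dim(R/P)$. Since $S$ is Gorenstein, it is Cohen--Macaulay, hence catenary and equidimensional, so $\dim S_Q=n-h$. Localizing, $\Ext^{n-j}_S(M,S)_Q\cong\Ext^{(n-h)-(j-h)}_{S_Q}(M_P,S_Q)$. By local duality over the Gorenstein local ring $S_Q$, this module is non-zero if and only if $\H^{j-h}_{PR_P}(M_P)\ne0$. Now $(\ast)$ says that no $P$ with $\dim(R/P)=h>j-r$ lies in $\Supp\Ext^{n-j}_S(M,S)$. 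Substituting $i=j-h$, condition $(\ast)$ becomes
$$
\H^i_{PR_P}(M_P)=0\quad\text{for all}\ P\in\Supp(M)\ \text{and all}\ i<\min\{r,\,d-\dim(R/P)\}. \tag{$\ast\ast$}
$$
The bound $i<d-h$ comes from $j<d$, and $i<r$ comes from $h>j-r$. Equivalently, $\depth M_P\ge\min\{r,\,d-\dim(R/P)\}$ for all $P\in\Supp(M)$. In the graded case the same argument applies to arbitrary primes $P$, since supports of the graded modules $\Ext$ are determined by the localizations at all primes.

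\textbf{(a) $\Rightarrow$ (b).} Since $R$ is a quotient of a Gorenstein ring, it is catenary. Because $M$ is equidimensional, every minimal prime of $M$ contained in $P$ has dimension $d$, so $\dim M_P+\dim(R/P)=d$. Then $(S_r)$ gives $\depth M_P\ge\min\{r,\dim M_P\}=\min\{r,d-h\}$, which is $(\ast\ast)$.

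\textbf{(b) $\Rightarrow$ (a).} Since $\dim M_P\le d-h$ always holds, $(\ast\ast)$ yields $\depth M_P\ge\min\{r,\dim M_P\}$, which is $(S_r)$. For equidimensionality, let $P$ be a minimal prime of $M$ with $h=\dim(R/P)<d$. Then $M_P$ has finite length and is non-zero, so $\H^0_{PR_P}(M_P)\ne0$. But $0<\min\{r,d-h\}$ because $r\ge1$ and $d-h\ge1$, so $(\ast\ast)$ forces $\H^0_{PR_P}(M_P)=0$, a contradiction.

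\textbf{Main obstacle.} The delicate step is the translation of $(\ast)$ into $(\ast\ast)$. It needs three ingredients: the dimension identity $\dim S_Q+\dim(S/Q)=n$, which holds because Gorenstein rings are Cohen--Macaulay; the compatibility of $\Ext$ with localization, with the grading handled carefully in the graded case; and the identification of $\Supp\Ext^{n-j}_S(M,S)$ through local duality at every prime. In the local case no completion is needed, since the $\Ext$ modules are already finitely generated over $S$; by contrast, the equality of dimensions in (\ref{eq:dimR-hatR}) was the point where completion entered.
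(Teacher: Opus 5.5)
Your proof is correct and uses the same ingredients as the paper's: the prime-by-prime description of $\Supp\Ext^{n-j}_S(M,S)$ via localization and local duality over $S_Q$ (the paper's Lemma \ref{Lem:serre2}), together with the catenary dimension formula (Lemma \ref{Lem:serre1}). The difference is only organizational: you recast (b) as the single pointwise condition $(\ast\ast)$, so both implications follow at once, whereas the paper argues each direction by contradiction and obtains equidimensionality, in fact unmixedness, from Lemma \ref{Lem:serre3}; your $(\ast\ast)$ gives unmixedness too if you apply it to associated primes instead of minimal ones.
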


    This result shows, under mild assumptions, that the $r$-th Serre depth of a finitely generated $R$-module $M$ measures how far $M$ is from being equidimensional and satisfying Serre’s condition $(S_r)$, analogously to how the depth measures the failure of $M$ to be Cohen-Macaulay.\smallskip
     
     For the proof of Theorem \ref{Thm:Serre}, we need several lemmas.\smallskip
    
    \begin{Lemma}\label{Lem:serre1}
    	Let $R$ be a Noetherian catenary ring, and let $M$ be an equidimensional $R$-module. Then
    	$$
    	\dim_R(M)\ =\ \dim_{R_P}(M_P)+\dim(R/P),\quad\text{for all}\ P\in\Supp(M).
    	$$
    \end{Lemma}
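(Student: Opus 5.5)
The plan is to reduce the statement to a chain-length computation in the quotient domains $R/Q$, where $Q$ runs over the minimal primes of $M$. Throughout I take $R$ local with maximal ideal $\m$, or graded with homogeneous maximal ideal $\m$, as in the standing assumptions of this section. I also take $M$ finitely generated, so that $\Supp(M)=V(\operatorname{Ann}M)$ is closed and has finitely many minimal elements, namely the minimal associated primes of $M$. In the graded case I would additionally restrict to homogeneous $P$, so that every prime in sight lies in $\m$.

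\textbf{Step 1: localize the dimension.} Fix $P\in\Supp(M)$. The minimal primes of $\Supp_{R_P}(M_P)$ are exactly the ideals $QR_P$ with $Q$ a minimal prime of $M$ and $Q\subseteq P$. Hence
$$
\dim_{R_P}(M_P)\ =\ \max\{\operatorname{ht}(P/Q):\ Q\ \text{minimal in}\ \Supp(M),\ Q\subseteq P\}.
$$
It therefore suffices to show that, for every such $Q$,
$$
\operatorname{ht}(P/Q)+\dim(R/P)\ =\ \dim(R/Q).
$$
Equidimensionality gives $\dim(R/Q)=\dim_R(M)$ for every such $Q$. So each term of the maximum equals $\dim_R(M)-\dim(R/P)$, and the lemma follows.

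\textbf{Step 2: the dimension formula in $R/Q$.} The ring $D=R/Q$ is a catenary domain with unique maximal (homogeneous) ideal $\bar\m$. Choose a saturated chain $Q=P_0\subsetneq\cdots\subsetneq P_a=P$ with $a=\operatorname{ht}(P/Q)$, which exists by the definition of height. Choose a saturated chain $P=P_a\subsetneq\cdots\subsetneq P_{a+b}=\m$ with $b=\dim(R/P)$; it ends at $\m$ because every prime lies in $\m$. Concatenating these gives a saturated chain from $Q$ to $\m$ of length $a+b$.

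A chain realizing $\dim(R/Q)$ is also a saturated chain from $Q$ to $\m$. Catenarity says that all saturated chains between $Q$ and $\m$ have the same length. Hence $a+b=\dim(R/Q)$, which is the displayed identity of Step 1.

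\textbf{Main obstacle.} The only subtle point is Step 2. Catenarity by itself does not give $\operatorname{ht}\mathfrak p+\dim(R/\mathfrak p)=\dim R$ for arbitrary non-local rings. What makes the argument work is that every saturated chain can be extended up to the single maximal ideal $\m$. This is exactly where the local (respectively graded, with homogeneous primes) hypothesis enters. Without it, one could only conclude the inequality $\dim(M)\ge\dim(M_P)+\dim(R/P)$.
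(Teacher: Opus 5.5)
Your proof is correct and follows the paper's route: take a minimal prime $Q\subseteq P$ of $M$, use equidimensionality to get $\dim_R(M)=\dim(R/Q)$, and use catenarity to get $\dim(R/Q)=\operatorname{ht}(P/Q)+\dim(R/P)$. It is more careful than the paper in two places. You take the maximum over \emph{all} minimal primes contained in $P$, whereas the paper asserts $\dim_{R_P}(M_P)=\dim(R_P/QR_P)$ for a single chosen $Q$. You also rightly insist on a unique maximal ideal: for the catenary domain $K[x]_{(x)}[y]$ and its height-one maximal ideal $P=(xy-1)$, the formula fails.

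One correction is needed in the graded case. The claim ``every prime in sight lies in $\m$'' is false there, because a chain of length $\dim(R/P)$ above a homogeneous $P$ may end at a non-homogeneous maximal ideal (e.g.\ $0\subset(x-1)\subset(x-1,y)$ in $K[x,y]$). You can fix this by invoking $\dim(R/P)=\dim (R/P)_{\m}$ for homogeneous $P$. It is cleaner to note that $R/Q$ is an affine domain, so $\operatorname{ht}(P/Q)+\dim(R/P)=\dim(R/Q)$ holds for \emph{every} prime $P\supseteq Q$. That also removes your restriction to homogeneous $P$, which matters because the paper applies the lemma at arbitrary $P\in\Supp(M)$ in the proof of Theorem~\ref{Thm:Serre}.
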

    \begin{proof}
    	Fix a prime $P\in\Supp(M)$. Let $Q\subset P$ be a minimal associated prime of $M$. Then $\dim_R(M)=\dim(R/Q)$ and $QR_P$ is a minimal associated prime ideal of $M_P$. Hence $\dim_{R_P}(M_P)=\dim_{R_P}(R_P/QR_P)$. Since $R$ is a catenary ring by assumption, we have $\dim(R/Q)=\dim(R_P/QR_P)+\dim(R/P)$. Hence,
    	$$
    	\dim_R(M)=\dim(R/Q)=\dim(R_P/QR_P)+\dim(R/P)=\dim_{R_P}(M_P)+\dim(R/P),
    	$$
    	as desired.
    \end{proof}

    \begin{Lemma}\label{Lem:serre2}
    	Let $(R,\m)$ be a Noetherian local ring or a standard graded $K$-algebra. Assume that $R$ is a homomorphic image of a Gorenstein ring $S$. Let $M$ be a finitely generated $R$-module, which we assume is homogeneous if $R$ is a $K$-algebra. Then, $$\Supp_R(\D_R(\H_\m^j(M)))\ =\ \Supp_R(\Ext^{\dim(S)-j}_S(M,S))$$ and
    	$$
    	P\in\Supp_R(\D_R(\H_\m^j(M)))\ \ \Longleftrightarrow\ \  \D_{R_P}(\H_{PR_P}^{j-\dim(R/P)}(M_P))\ne0,
    	$$
    	for all $P\in\Supp_R(M)$.
    \end{Lemma}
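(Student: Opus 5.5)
The plan is to pass through local duality twice, once over $R$ and once over $R_P$, and compare the two $\Ext$ modules via localization.

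\textbf{First equality.} Let $n=\dim(S)$ and $\n$ the (homogeneous) maximal ideal of $S$. The proof of the last proposition of Section~\ref{sec1} gives
\begin{itemize}
\item in the graded case, $\D_R(\H_\m^j(M))\cong\Ext^{n-j}_S(M,S)$ by (\ref{eq:iso-graded-D});
\item in the local case, $\D_R(\H_\m^j(M))\cong\Ext^{n-j}_S(M,S)\otimes_R\widehat{R}$ by (\ref{eq:iso-local-D}).
\end{itemize}
In the graded case the supports agree trivially. In the local case, write $E=\Ext^{n-j}_S(M,S)$; it is a finitely generated $R$-module, being annihilated by $I$. Since $R\to\widehat{R}$ is faithfully flat, for a finitely generated module we have $\Supp_R(E\otimes_R\widehat{R})=\Supp_R(E)$. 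Indeed, $(E\otimes_R\widehat{R})_P=E_P\otimes_{R_P}\widehat{R}_P$, and $\widehat{R}_P$ is faithfully flat over $R_P$. Here $E\otimes_R\widehat{R}$ is regarded as an $R$-module by restriction.

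\textbf{Second equivalence.} Fix $P\in\Supp_R(M)$. Let $Q\subset S$ be its preimage, so $R_P=S_Q/IS_Q$ with $S_Q$ Gorenstein. Localization gives
$$\Ext^{n-j}_S(M,S)_P\cong\Ext^{n-j}_{S_Q}(M_P,S_Q).$$
Rewrite the index as $n-j=\dim(S_Q)-(j-\dim(S/Q))$. This uses $\dim(S)=\dim(S_Q)+\dim(S/Q)$, which holds because a Gorenstein local ring is Cohen--Macaulay, hence catenary and equidimensional. In the graded case the same holds after passing to $S_\n$ or by standard graded arguments. Also $\dim(S/Q)=\dim(R/P)$.

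Now apply local duality over $R_P=S_Q/IS_Q$, using the first part of the lemma with $R_P$ in place of $R$. It gives
$$\Ext^{\dim S_Q-i}_{S_Q}(M_P,S_Q)\neq0\iff\D_{R_P}(\H^{i}_{PR_P}(M_P))\neq0,$$
with $i=j-\dim(R/P)$. Indeed, the dual of local cohomology is the completion of this $\Ext$, and completion of a finitely generated module is zero only if the module is. Combining these steps, $P\in\Supp_R(\D_R(\H_\m^j(M)))$ iff $\Ext^{n-j}_S(M,S)_P\ne0$ iff $\D_{R_P}(\H_{PR_P}^{j-\dim(R/P)}(M_P))\ne0$. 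For negative indices both sides vanish, with the convention that $\H^{i}=0$ for $i<0$.

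\textbf{Main obstacle.} The delicate point is the dimension formula $\dim(S)=\dim(S_Q)+\dim(S/Q)$. In the local case it needs $S$ Cohen--Macaulay local. In the graded case $Q$ need not be homogeneous, and $S=K[x_1,\dots,x_n]$ is not local, so the formula must be justified for primes contained in $\n$. There I would localize at $\n$ first, noting that $\Supp$ and local cohomology at $\m$ are unchanged, and then apply the local argument to $S_\n$.
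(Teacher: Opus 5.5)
Your proposal is correct and follows the paper's proof essentially step for step. Local duality identifies $\D_R(\H_\m^j(M))$ with $\Ext^{n-j}_S(M,S)$ (tensored with $\widehat{R}$ in the local case), and faithful flatness of $\widehat{R}_P$ over $R_P$ gives the equality of supports; localizing the $\Ext$ at $Q$ and applying local duality over $R_P=S_Q/IS_Q$ with the index shift $\dim(S_Q)=\dim(S)-\dim(R/P)$ then gives the equivalence.

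One small remark on your ``main obstacle'': in the graded case you need not localize at $\n$, which would only handle primes contained in $\n$, whereas the lemma is stated and used for all $P\in\Supp_R(M)$. Since $S=K[x_1,\dots,x_n]$ is an affine domain, $\dim(S_Q)+\dim(S/Q)=\dim(S)$ already holds for every prime $Q$ of $S$.
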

    \begin{proof}
    	In the graded setting, the first statement is trivial in view of equation (\ref{eq:iso-graded-D}). When $R$ is local, equation (\ref{eq:iso-local-D}) holds, and the first statement holds due to the following more general fact. Let $N,U$ be $R$-modules such that $N\cong U\otimes_R\widehat{R}$. Then $\Supp_R(N)=\Supp_R(U)$. Indeed, let $P\in\Spec(R)$. By \cite[Proposition 10.32]{Rot} we have
    	$$
    	N_P\ \cong\ (U\otimes_R\widehat{R})_P\ \cong\ U_P\otimes_{R_P}\widehat{R}_P.
    	$$
    	Since $\widehat{R}$ is a faithfully flat $R$-module, then $\widehat{R}_P$ is also a faithfully flat $R_P$-module. Hence $N_P\ne0$ if and only if $U_P\ne0$. This yields that $\Supp_R(N)=\Supp_R(U)$, as desired.
    	 
    	Now for the second claim, let $P\in\Supp_R(\Ext^{n-j}_S(M,S))$ be a prime. Since $M$ is finitely generated as a $R$-module, and $R\cong S/I$ for some $I\subset S$, $M$ is also finitely generated as a $S$-module. Thus, \cite[Proposition 10.42]{Rot} gives
    	\begin{equation}\label{eq:iso2}
    		(\Ext_S^{\dim(S)-j}(M,S))_P\ \cong\ \Ext_{S_P}^{\dim(S)-j}(M_P,S_P).
    	\end{equation}
    	Let $\m_P=PR_P$. The ring $(R_P,\m_P)$ is again the quotient of a Gorenstein ring. In fact $R_P\cong(S/I)_P=S_P/I_P$ and the localization of a Gorenstein ring is again Gorenstein (see \cite[Proposition 3.1.19(a)]{BH}). Gorenstein rings are Cohen-Macaulay, and thus universally catenary (see \cite[Theorem 2.12]{BH}). Furthermore, it is easily verified (see the comments before \cite[Theorem 2.12]{BH}) that a ring $T$ is universally catenary if and only if any finitely generated $T$-algebra is such. In particular, $R$ is universally catenary being a quotient of $S$. Hence $S$ and $R$ are catenary. Since $R\cong S/I$ then $P\cong Q/I$ with $Q$ a prime of $S$ containing $I$. So $R/P\cong(S/I)/(Q/I)\cong S/Q$, $S_P=S_Q$ and Lemma \ref{Lem:serre1} yields that $\dim(S_P)=\dim(S)-\dim(S/Q)=\dim(S)-\dim(R/P)$. In the local case equation (\ref{eq:iso-local-D}) yields that
    	\begin{equation}\label{eq:iso3}
    		\begin{aligned}
    			\D_{R_P}(\H_{\m_P}^{j-\dim(R/P)}(M_P))\ &\cong\ \Ext^{\dim(S_P)-(j-\dim(R/P))}_{S_P}(M_P,S_P)\otimes_{R_P}\widehat{R_P}\\
    			&=\ \Ext_{S_P}^{\dim(S)-j}(M_P,S_P)\otimes_{R_P}\widehat{R_P}.
    		\end{aligned}
    	\end{equation}
        Since $\widehat{R_P}$ is a faithfully flat $R_P$-module, (\ref{eq:iso3}) implies that
        $$
        \D_{R_P}(\H_{\m_P}^{j-\dim(R/P)}(M_P))\ne0\ \ \Longleftrightarrow\ \ \Ext_{S_P}^{\dim(S)-j}(M_P,S_P)\ne0.
        $$
        By equation (\ref{eq:iso2}), this is equivalent to $(\Ext_S^{\dim(S)-j}(M,S))_P\ne0$. Combining this with the first statement, this is further equivalent to $P\in\Supp_R(\D_R(\H_\m^j(M)))$, as wanted.\smallskip
        
        In the graded case, we can proceed similarly, by using equation (\ref{eq:iso-graded-D}).
    \end{proof}

    \begin{Lemma}\label{Lem:serre3}
    	Let $(R,\m)$ be a Noetherian local ring or a standard graded $K$-algebra. Assume that $R$ is a homomorphic image of a Gorenstein ring. Let $M$ be a finitely generated $R$-module, which we assume is homogeneous if $R$ is a $K$-algebra. If $S_1\text{-}\!\depth(M)=\dim(M)$, then $M$ is unmixed, in particular equidimensional.
    \end{Lemma}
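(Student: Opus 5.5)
We argue by contradiction, using Lemma \ref{Lem:serre2} to transport information about associated primes of $M$ into the supports of the duals $\D_R(\H_\m^j(M))$. Set $d=\dim(M)$, and suppose there is $P\in\Ass_R(M)$ with $\dim(R/P)=j<d$. Since $P\in\Ass_R(M)$, we have $\depth_{R_P}(M_P)=0$, so $\H^0_{PR_P}(M_P)\ne0$ and hence $\D_{R_P}(\H^{0}_{PR_P}(M_P))\ne0$. Writing $0=j-\dim(R/P)$, the second statement of Lemma \ref{Lem:serre2} gives $P\in\Supp_R(\D_R(\H_\m^{j}(M)))$. Consequently
$$
\dim\D_R(\H_\m^j(M))\ \ge\ \dim(R/P)\ =\ j\ \ge\ j-1+1,
$$
which is exactly the defining condition in $S_1\text{-}\!\depth$ at index $j$. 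This yields $S_1\text{-}\!\depth(M)\le j<d$, contradicting the hypothesis. Therefore every associated prime $P$ satisfies $\dim(R/P)\ge d$, and since always $\dim(R/P)\le\dim(M)=d$ for $P\in\Supp(M)$, equality holds. Thus $M$ is unmixed.

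Equidimensionality then follows immediately, since minimal primes of $\Supp(M)$ are associated primes (as noted earlier, unmixed modules are equidimensional). The case $M=0$ is vacuous.

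The step to watch is the application of Lemma \ref{Lem:serre2}. It requires $P\in\Supp_R(M)$, which holds because $P$ is associated. In the graded case we need $P$ homogeneous; associated primes of a graded module are homogeneous, so this is fine, though the lemma's graded version was only sketched. Beyond that the argument is a direct unwinding of the definition, so I expect no serious obstacle. The only subtlety is making sure the index $j-\dim(R/P)=0$ is legitimate, i.e. $j\ge0$, which is automatic.
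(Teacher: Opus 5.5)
Your proposal is correct and matches the paper's proof essentially step for step. Both arguments take an associated prime $P$ with $\dim(R/P)<\dim(M)$ and use $\H^0_{PR_P}(M_P)\ne0$ together with Lemma \ref{Lem:serre2} to place $P$ in $\Supp_R(\D_R(\H_\m^{\dim(R/P)}(M)))$. This forces $S_1\text{-}\!\depth(M)\le\dim(R/P)<\dim(M)$. The caveat you flag about the graded version of Lemma \ref{Lem:serre2}, which the paper only sketches, applies equally to the paper's proof.
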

    \begin{proof}
    	Suppose by contradiction that $M$ is not unmixed. Then there exists a prime $P\in\Ass_R(M)$ with $\dim(R/P)<\dim_R(M)$. Localizing at $P$, we obtain that $\depth_{R_P}(M_P)=0$ because $\m_P=PR_P\in\Ass_{R_P}(M_P)$. By \cite[Theorem 3.5.7(b)]{BH} we have $\H_{\m_P}^0(M_P)\ne0$. Hence, Lemma \ref{Lem:serre2} implies that $P$ belongs to the support of $\D_R(\H_\m^{\dim(R/P)}(M))$ and this shows that
    	$$
    	\dim_R\D_R(\H_\m^{\dim(R/P)}(M))\ \ge\ \dim(R/P).
    	$$
    	
    	Therefore $S_1\text{-}\!\depth(M)\le\dim(R/P)<\dim_R(M)$, but this contradicts the assumption. Hence $M$ must be unmixed.
    \end{proof}
    
    We are now in the position to prove Theorem \ref{Thm:Serre}.
    \begin{proof}[Proof of Theorem \ref{Thm:Serre}]
    	Let $n=\dim(R)$.\smallskip
    	
    	(a) $\Rightarrow$ (b) Assume that $M$ is equidimensional and satisfies Serre's condition $(S_r)$. Suppose  that
    	$$
    	S_r\text{-}\!\depth_R(M)=j<\dim_R(M).
    	$$
    	
    	Then $\dim_R\D_R(\H_\m^j(M))\ge j-r+1$ and so $\D_R(\H_\m^j(M))\ne0$. We choose a prime $P\in\Supp_R(\D_R(\H_\m^j(M)))$ such that $\dim(R/P)=\dim_R\D_R(\H_\m^j(M))$. Lemma \ref{Lem:serre2} implies that $\D_{R_P}(\H_{\m_P}^{j-\dim(R/P)}(M_P))\ne0$. Thus $\H_{\m_P}^{j-\dim(R/P)}(M_P)\ne0$ as well, and hence \cite[Theorem 3.5.7]{BH} gives that $\depth_{R_P}(M_P)\le j-\dim(R/P)$. Since $M$ is equidimensional, $R$ is catenary and $j<\dim_R(M)$, Lemma \ref{Lem:serre1} implies that
    	$$
    	\depth_{R_P}(M_P)\ \le\  j-\dim(R/P)\ <\ \dim_R(M)-\dim(R/P)=\ \dim_{R_P}(M_P). 
    	$$
    	Moreover,
    	$$
    	\depth_{R_P}(M_P)\ \le\ j-\dim(R/P)\ =\ j-\dim_R\D_R(\H_\m^j(M))\ \le\ r-1\ <\ r.
    	$$
    	Altogether, $\depth_{R_P}(M_P)<\min\{r,\,\dim_{R_P}(M_P)\}$. This contradicts the fact that $M$ satisfies Serre's condition $(S_r)$, as desired.\smallskip
    	
    	(b) $\Rightarrow$ (a) Assume that $S_r\text{-}\!\depth_R(M)=\dim_R(M)$. By Proposition \ref{Prop:inequalities}(a) we have $S_1\text{-}\!\depth_R(M)=\dim_R(M)$, too. Then, Lemma \ref{Lem:serre3} implies that $M$ is equidimensional. Suppose for a contradiction that $$\depth_{R_P}(M_P)\ <\ \min\{r,\,\dim_{R_P}(M_P)\}$$ for some prime $P\in\Supp_R(M)$. Let $\m_P=PR_P$ and $j=\dim(R/P)+\depth_{R_P}(M_P)$. By \cite[Theorem 3.5.7(b)]{BH}, $\H_{\m_P}^{\depth_{R_P}(M_P)}(M_P)\ne0$. Hence $\D_R(\H_{\m_P}^{\depth_{R_P}(M_P)}(M_P))\ne0$ too and Lemma \ref{Lem:serre2} implies that $P\in\Supp_R(\D_R(\H_\m^j(M)))$. Since $\depth_{R_P}(M_P)<r$, we obtain that
    	\begin{equation}\label{eq:ineqP}
    		\dim(R/P)\ =\ j-\depth_{R_P}(M_P)\ >\ j-r.
    	\end{equation}
    	Hence $\dim_R\D_R(\H_\m^j(M))\ge\dim(R/P)\ge j-r+1$. By the definition of $r$-th Serre depth, we get $j\ge S_r\text{-}\!\depth_R(M)=\dim_R(M)$. Since $\depth_{R_P}(M_P)<\dim_{R_P}(M_P)$, $M$ is equidimensional and $R$ is catenary, then Lemma \ref{Lem:serre1} and equation (\ref{eq:ineqP}) imply that
    	\begin{align*}
    		\dim(R/P)\ &=\  j-\depth_{R_P}(M_P)\\&\ge\ S_r\text{-}\!\depth_R(M)-\depth_{R_P}(M_P)\\
    		&>\ \dim_R(M)-\dim_{R_P}(M_P)\\&=\ \dim(R/P),
    	\end{align*}
    	which is a contradiction. The assertion follows.
    \end{proof}
    
    Lemma \ref{Lem:serre2} gives the following additional information.
    \begin{Corollary}
    	Let $(R,\m)$ be a Noetherian local ring or a standard graded $K$-algebra. Assume that $R$ is a homomorphic image of a Gorenstein ring. Let $M$ be a finitely generated $R$-module, which we assume is homogeneous if $R$ is a $K$-algebra. Then
    	$$
    	S_r\text{-}\!\depth_{R_P}(M_P)\ \ge\ S_{r+\dim(R/P)}\text{-}\!\depth_R(M)-\dim(R/P),
    	$$
    	for all primes $P\in\Supp_R(M)$ and $r\ge1$. Equality holds for a given $P\in\Supp_R(M)$ if $S_{r-\dim(R/P)}\text{-}\!\depth_R(M)=\dim_R(M)$.
    \end{Corollary}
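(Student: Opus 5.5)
The plan is to compare, prime by prime, the supports of the dualized local cohomology of $M$ and of $M_P$, using local duality over a Gorenstein ring $S$ with $R\cong S/I$. This refines the argument of Lemma \ref{Lem:serre2}. Fix $P\in\Supp_R(M)$ and put $h=\dim(R/P)$, $n=\dim(S)$. As in Lemma \ref{Lem:serre2}, $R$ is catenary and $\dim(S_P)=n-h$. For $i\ge0$ set
$$d^{j}=\dim_R\D_R(\H_\m^j(M)),\qquad d_P^{i}=\dim_{R_P}\D_{R_P}(\H_{PR_P}^{i}(M_P)).$$
By the first statement of Lemma \ref{Lem:serre2}, applied to $R$ and to $R_P$ (which is again a quotient of the Gorenstein ring $S_P$), together with (\ref{eq:iso2}), we have
$$\Supp_{R_P}\D_{R_P}(\H_{PR_P}^{i}(M_P))=\Supp_{R_P}\Ext_{S_P}^{n-h-i}(M_P,S_P)=\{QR_P:\ Q\subseteq P,\ Q\in\Supp_R\D_R(\H_\m^{i+h}(M))\}.$$

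The key inequality is $d_P^i\le d^{i+h}-h$ whenever the left side is not $-\infty$. To see it, take $Q\subseteq P$ in $\Supp_R\D_R(\H_\m^{i+h}(M))$. Catenarity of $R$ gives $\dim(R/Q)\ge\operatorname{ht}(P/Q)+\dim(R/P)$, hence
$$\dim(R_P/QR_P)=\operatorname{ht}(P/Q)\le\dim(R/Q)-h\le d^{i+h}-h.$$
Taking the maximum over all such $Q$ gives the inequality.

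For the lower bound, let $s=S_{r+h}\text{-}\!\depth_R(M)$. For every $i<s-h$ we have $i+h<s$, so $d^{i+h}<(i+h)-(r+h)+1=i-r+1$. The key inequality then gives $d_P^i<i-r+1-h\le i-r+1$. This holds trivially when $d_P^i=-\infty$. Hence no $i<s-h$ qualifies in the definition of $S_r\text{-}\!\depth_{R_P}(M_P)$, and so $S_r\text{-}\!\depth_{R_P}(M_P)\ge s-h$. This part is routine.

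For the equality clause, assume $r>h$ and $S_{r-h}\text{-}\!\depth_R(M)=d:=\dim_R(M)$. By Theorem \ref{Thm:Serre} and Lemma \ref{Lem:serre3}, $M$ is unmixed and satisfies $(S_{r-h})$. By Lemma \ref{Lem:serre1}, $M_P$ is equidimensional of dimension $d-h$ and every $Q\in\Supp(M)$ with $Q\subseteq P$ satisfies $\dim(R/Q)=\operatorname{ht}(P/Q)+h$ exactly. The goal is to reverse the key inequality in the relevant degree: exhibit, for $j=s$, a prime $Q\subseteq P$ in $\Supp\D_R(\H_\m^{s}(M))$ with $\dim(R/Q)\ge s-(r+h)+1$. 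Such a $Q$ would give $d_P^{s-h}\ge (s-h)-r+1$, hence $S_r\text{-}\!\depth_{R_P}(M_P)\le s-h$.

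I would first dispose of the case $s=d$, where equality is automatic: $S_r\text{-}\!\depth_{R_P}(M_P)\le\dim(M_P)=d-h$ by Proposition \ref{Prop:inequalities}(a), and the lower bound gives $\ge d-h$. When $s<d$, I would pick $Q$ with $\dim(R/Q)=d^s$ and use Lemma \ref{Lem:serre2} to get $\depth M_Q\le s-\dim(R/Q)\le r+h-1$. Condition $(S_{r-h})$ for $M$ then constrains $\dim(M_Q)$. The main obstacle, where I am least sure, is forcing such a $Q$ to lie inside $P$. If this direct choice fails, I would instead run the argument on $M_P$ itself. I would take the minimal $i$ with $\H^{i}_{PR_P}(M_P)\ne0$ and contributing, and transport it back to degree $i+h$ using the exact dimension formula available under equidimensionality. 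The goal there is to show $s-h$ is attained, by comparing with the bound $S_{r+h}\text{-}\!\depth_R(M)\le d$.
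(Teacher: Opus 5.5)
Your proof of the inequality is correct and is the paper's argument seen from the other side. The paper takes a top-dimensional prime of $\Supp_{R_P}\D_{R_P}(\H^{s}_{PR_P}(M_P))$ and, via Lemma~\ref{Lem:serre2}, shows that its contraction lies in $\Supp_R\D_R(\H_\m^{s+\dim(R/P)}(M))$. That is your correspondence $d_P^i\le d^{i+h}-h$ read in reverse.

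The equality clause is where the proposal stalls. The obstacle you flagged cannot be removed, because with the hypothesis $S_{r-\dim(R/P)}\text{-}\!\depth_R(M)=\dim_R(M)$ as printed, the claim is false.

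Take $R=K[[x_1,x_2,x_3,x_4]]$, $M=R/\bigl((x_1,x_2)\cap(x_3,x_4)\bigr)$, $P=(x_1,x_2,x_3)$ (so $h=1$) and $r=2$.
\begin{itemize}
\item $M$ is unmixed of dimension $2$, so $S_1\text{-}\!\depth_R(M)=2=\dim_R(M)$ by Theorem~\ref{Thm:Serre}. Thus the printed hypothesis holds.
\item From $0\to M\to R/(x_1,x_2)\oplus R/(x_3,x_4)\to K\to 0$ one gets $\depth(M)=1$ and $\H_\m^1(M)\cong K$. Hence $S_3\text{-}\!\depth_R(M)=1$ by Proposition~\ref{Prop:inequalities}(b).
\item $M_P\cong R_P/(x_1,x_2)R_P$ is Cohen-Macaulay of dimension $1$, so $S_2\text{-}\!\depth_{R_P}(M_P)=1\neq 0=S_3\text{-}\!\depth_R(M)-1$.
\end{itemize}
In your terms, $s=1<d=2$, and $\D_R(\H_\m^1(M))\cong K$ is supported only at $\m\not\subseteq P$. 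So the prime $Q\subseteq P$ you were trying to produce does not exist, and no fallback argument through $M_P$ can establish a false statement.

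The paper's proof actually assumes $S_{r+\dim(R/P)}\text{-}\!\depth_R(M)=\dim_R(M)$; the minus sign in the statement is a slip. Under that hypothesis you have $s=d$ from the outset. Your short paragraph on the case $s=d$ is then already the complete argument, exactly as in the paper: the lower bound $d-h$ meets $S_r\text{-}\!\depth_{R_P}(M_P)\le\dim_{R_P}(M_P)\le d-h$.
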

    \begin{proof}
    	Fix $P\in\Supp_R(M)$, and put $s=S_r\text{-}\!\depth_{R_P}(M_P)$. Let $j=s+\dim(R/P)$. Since $\D_{R_P}(\H_{PR_P}^{j-\dim(R/P)}(M_P))\ne0$, Lemma \ref{Lem:serre2} gives that $P\in\Supp_R(\D_R(\H_\m^j(M)))$.
    	
    	By definition of $r$-th Serre depth, we have
    	$$
    	d\ =\ \dim_{R_P}\D_{R_P}(\H_{PR_P}^{s}(M_P))\ \ge\ s-r+1.
    	$$
    	Let $Q\in\Supp_{R_P}(\D_{R_P}(\H_{PR_P}^{s}(M_P)))$ with $\dim(R_P/Q)=d$. Let $\pi:R\rightarrow R_P$ be the localization map, and let $Q'=\pi^{-1}(Q)\in\Spec(R)$. Then, Lemma \ref{Lem:serre2} gives that
    	\begin{equation}\label{eq:<=>contorta}
    		Q'\in\Supp_R(\D_R(\H_\m^{s+\dim(R/P)}(M)))\Longleftrightarrow \D_{R_{Q'}}(\H_{Q'R_{Q'}}^{s+\dim(R/P)-\dim(R/Q')}(M_{Q'}))\ne0.
    	\end{equation}
    	Since $R_{Q'}=(R_P)_Q$, $Q'R_{Q'}=Q(R_P)_Q$ and $M_{Q'}=(M_P)_Q$, Lemma \ref{Lem:serre2} also implies that
    	$$
    	Q\in\Supp_{R_P}(\D_{R_P}(\H_{PR_P}^{s}(M)))\ \Longleftrightarrow\ \D_{R_{Q'}}(\H_{Q'R_{Q'}}^{s-\dim(R_P/Q)}(M_{Q'}))\ne0.
    	$$
    	But we choose $Q\in\Supp_{R_P}(\D_{R_P}(\H_{PR_P}^{s}(M)))$. Hence $\D_{R_{Q'}}(\H_{Q'R_{Q'}}^{s-\dim(R_P/Q)}(M_{Q'}))\ne0$. Note that $\dim(R/Q')=\dim(R_P/Q)+\dim(R/P)$. So
    	$$
    	\D_{R_{Q'}}(\H_{Q'R_{Q'}}^{s+\dim(R/P)-\dim(R/Q')}(M_{Q'}))\ =\ \D_{R_{Q'}}(\H_{Q'R_{Q'}}^{s-\dim(R_P/Q)}(M_{Q'}))\ \ne\ 0.
    	$$
    	Hence (\ref{eq:<=>contorta}) implies that $Q'\in\Supp_R(\D_R(\H_\m^{s+\dim(R/P)}(M)))$. Thus
    	\begin{align*}
    		\dim_R\D_R(\H_\m^{s+\dim(R/P)}(M))\ &\ge\ \dim(R/Q')\\
    		&\ge\ d\ \ge\ s-r+1\\
    		&=\ (s+\dim(R/P))-(r+\dim(R/P))+1.
    	\end{align*}
    	Consequently,
    	\begin{equation}\label{eq:ineq-r+dim(R/P)}
    		S_{r+\dim(R/P)}\text{-}\!\depth_R(M)\ \le\ s+\dim(R/P)\ =\ S_r\text{-}\!\depth_{R_P}(M_P)+\dim(R/P).
    	\end{equation}

    	Finally, assume in addition that $S_{r+\dim(R/P)}\text{-}\!\depth_R(M)=\dim_R(M)$ for a fixed $P\in\Supp_R(M)$. By Theorem \ref{Thm:Serre}, $M$ satisfies Serre's condition $(S_{r+\dim(R/P)})$ and so it satisfies $(S_1)$. Hence $M$ is equidimensional. Since $R$ is catenary, Lemma \ref{Lem:serre1} together with inequality (\ref{eq:ineq-r+dim(R/P)}) yield that
    	$$
    	S_r\text{-}\!\depth_{R_P}(M_P)\ \ge\ \dim_R(M)-\dim(R/P)\ =	\ \dim_{R_P}(M_P).
    	$$
    	Using Proposition \ref{Prop:inequalities}(a), the desired equality follows.
    \end{proof}
	
	One may ask if the conditions (a) and (b) stated in Theorem \ref{Thm:Serre} are actually equivalent for any Noetherian local ring or standard graded $K$-algebra. Next, we prove that the implication (b) $\Rightarrow$ (a) holds in general. This result completes the proof of Theorem \ref{ThmA}.
	
	\begin{Theorem}\label{Thm:general-implication}
		Let $(R,\m)$ be a Noetherian local ring or a standard graded $K$-algebra. Let $M$ be a finitely generated $R$-module, which we assume is homogeneous if $R$ is a $K$-algebra. Assume that $S_r\text{-}\!\depth(M)=\dim(M)$. Then $M$ is unmixed, hence equidimensional, and satisfies Serre's condition $(S_r)$.
	\end{Theorem}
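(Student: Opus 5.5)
The strategy is to treat the graded case directly and, in the local case, to pass to the completion, where Theorem \ref{Thm:Serre} is available, and then descend the conclusions along the faithfully flat map $R\rightarrow\widehat{R}$.

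\textbf{Graded case.} If $R$ is a standard graded $K$-algebra, then $R$ is a quotient of a polynomial ring, hence a homomorphic image of a Gorenstein ring. Assume $S_r\text{-}\!\depth(M)=\dim(M)$. Proposition \ref{Prop:inequalities}(a) gives $S_1\text{-}\!\depth(M)=\dim(M)$, so Lemma \ref{Lem:serre3} shows that $M$ is unmixed. Theorem \ref{Thm:Serre} then shows that $M$ satisfies $(S_r)$, and nothing more is needed.

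\textbf{Local case: passing to the completion.} Let $(R,\m)$ be local. By Theorem \ref{Thm:completion} and \cite[Corollary 2.1.8]{BH},
$$
S_r\text{-}\!\depth_{\widehat{R}}(\,\widehat{\!M\!}\,)=S_r\text{-}\!\depth_R(M)=\dim_R(M)=\dim_{\widehat{R}}(\,\widehat{\!M\!}\,).
$$
Since $\widehat{R}$ is complete, it is a quotient of a regular, hence Gorenstein, ring by Cohen's structure theorem. Applying the graded-case argument (Lemma \ref{Lem:serre3} and Theorem \ref{Thm:Serre}) over $\widehat{R}$ shows that $\widehat{\!M\!}$ is unmixed and satisfies $(S_r)$. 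It remains to descend both properties to $M$.

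\textbf{Descent of unmixedness.} Let $P\in\Ass_R(M)$. By the formula for associated primes under flat base change \cite[Theorem 23.2]{Mat},
$$
\Ass_{\widehat{R}}(\,\widehat{\!M\!}\,)=\bigcup_{P\in\Ass_R(M)}\Ass_{\widehat{R}}(\widehat{R}/P\widehat{R}).
$$
Pick $Q\in\Ass_{\widehat{R}}(\widehat{R}/P\widehat{R})$ with $Q$ minimal over $P\widehat{R}$. Then
$$
\dim(\widehat{R}/Q)\le\dim(\widehat{R}/P\widehat{R})=\dim(R/P).
$$
Because $Q\in\Ass(\,\widehat{\!M\!}\,)$ and $\widehat{\!M\!}$ is unmixed, $\dim(\widehat{R}/Q)=\dim(\,\widehat{\!M\!}\,)=\dim(M)$. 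Hence $\dim(R/P)\ge\dim(M)$, and the reverse inequality is automatic, so $M$ is unmixed.

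\textbf{Descent of $(S_r)$.} Let $P\in\Supp_R(M)$ and choose $Q\in\Spec(\widehat{R})$ minimal over $P\widehat{R}$. Then $Q$ contracts to $P$, so $Q\in\Supp(\,\widehat{\!M\!}\,)$, and $R_P\rightarrow\widehat{R}_Q$ is a flat local homomorphism whose closed fibre $\widehat{R}_Q/P\widehat{R}_Q$ has dimension $0$. The flat base change formulas for dimension (\cite[Theorem A.11]{BH}) and depth (\cite[Proposition 1.2.16]{BH}) give
$$
\dim(\,\widehat{\!M\!}_Q)=\dim(M_P),\qquad \depth(\,\widehat{\!M\!}_Q)=\depth(M_P),
$$
since $(\,\widehat{\!M\!}\,)_Q\cong M_P\otimes_{R_P}\widehat{R}_Q$. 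Therefore the $(S_r)$ inequality for $\widehat{\!M\!}$ at $Q$ is exactly the $(S_r)$ inequality for $M$ at $P$.

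The main obstacle is this descent step: one must choose $Q$ minimal over $P\widehat{R}$ so that the closed fibre is zero-dimensional, and check that the base change formulas apply to the localized modules.
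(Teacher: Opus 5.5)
Your proposal is correct and follows the paper's proof essentially step for step. In the local case it passes to $\widehat{R}$ via Theorem~\ref{Thm:completion} and Cohen's structure theorem, and applies Theorem~\ref{Thm:Serre} there. It descends unmixedness through $\Ass_{\widehat{R}}(\widehat{M})=\bigcup_{P\in\Ass_R(M)}\Ass_{\widehat{R}}(\widehat{R}/P\widehat{R})$ together with $\dim(\widehat{R}/P\widehat{R})=\dim(R/P)$. It descends $(S_r)$ by choosing $Q$ minimal over $P\widehat{R}$, so that $R_P\rightarrow\widehat{R}_Q$ is flat local with zero-dimensional closed fibre. Two details of your write-up are slightly more careful than the paper's: you explicitly bound $\dim(\widehat{R}/Q)$ for a minimal prime $Q$ of $P\widehat{R}$, and you correctly write $\widehat{M}_Q\cong M_P\otimes_{R_P}\widehat{R}_Q$ (the paper writes the localization at $P$).
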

    \begin{proof}
    	If $R$ is a $K$-algebra, Theorem \ref{Thm:Serre} applies and there is nothing to prove. We therefore assume that $R$ is local. Now, Theorem \ref{Thm:completion}, \cite[Corollary 2.1.8(a)]{BH} and the assumption imply that
    	$$
    	S_r\text{-}\!\depth_{\widehat{R}}(\,\widehat{\!M\!}\,)\ =\ S_r\text{-}\!\depth_R(M)\ =\ \dim_R(M)\ =\ \dim_{\widehat{R}}(\,\widehat{\!M\!}\,).
    	$$
    	
    	By Cohen's structure theorem (\cite[Theorem 29.4(ii)]{Mat}), since regular rings are Gorenstein (\cite[Proposition 3.1.20]{BH}), $\widehat{R}$ is a homomorphic image of a Gorenstein ring. Hence, Theorem \ref{Thm:Serre} and the above equation imply that $\widehat{\!M\!}$ is equidimensional and satisfies Serre's condition $(S_r)$. Hence $\widehat{\!M\!}$ is unmixed, because it is equidimensional and satisfy $(S_1)$.
    	
    	Let us prove that $M$ is unmixed. Since $M$ is finitely generated, \cite[Theorem 8.7]{Mat} yields that $\widehat{\!M\!}\cong M\otimes_R\widehat{R}$. Next, recalling that the inclusion map $R\rightarrow\widehat{R}$ is faithfully flat, \cite[Theorem 23.2(ii)]{Mat} gives
    	$$
    	\Ass_{\widehat{R}}(\,\widehat{\!M\!}\,)\ =\ \bigcup_{P\in\Ass_R(M)}\Ass_{\widehat{R}}(\widehat{R}/P\widehat{R}).
    	$$
    	Let $P\in\Ass_R(M)$. Note that $R/P\otimes_{R}\widehat{R}\cong\widehat{R}/P\widehat{R}$. Since $\widehat{\m}=\m\widehat{R}$, Lemma \ref{Lem:changeofrings} now gives
    	$$
    	\dim(\widehat{R}/P\widehat{R})\ =\ \dim_{\widehat{R}}(R/P\otimes_{R}\widehat{R})\ =\ \dim_R(R/P)+\dim(\widehat{R}/\m\widehat{R})\ =\ \dim(R/P).
    	$$
    	Since $\widehat{\!M\!}$ is unmixed we obtain that $\dim(R/P)=\dim(\widehat{R}/P\widehat{R})=\dim_{\widehat{R}}(\,\widehat{\!M\!}\,)$ for all $P\in\Ass_R(M)$. We conclude that $M$ is indeed unmixed.\smallskip
    	
    	It remains to prove that $M$ satisfies $(S_r)$. To this end, let $P\in\Supp_R(M)$. Choose a minimal prime ideal $Q\in\Spec(\widehat{R})$ of $P\widehat{R}$. Then $\dim(\widehat{R}_Q/P\widehat{R}_Q)=0$. Recall that the inclusion map $\varphi:R\rightarrow\widehat{R}$ is faithfully flat. Since $\varphi^{-1}(Q)=P$ by construction, it follows that the map $\varphi_Q:R_P\rightarrow\widehat{R}_Q$ is again faithfully flat. Note that
    	$$
    	\widehat{\!M\!}_P\ \cong\ (M\otimes_R\widehat{R})_P\ \cong\ M_P\otimes_{R_P}\widehat{R}_Q.
    	$$
    	 So, Lemma \ref{Lem:changeofrings} yields that
    	\begin{align*}
    		\dim_{\widehat{R}_Q}(\,\widehat{\!M\!}_P)\ &=\  \dim_{\widehat{R}_Q}(M_P\otimes_{R_P}\widehat{R}_Q)\ =\  \dim_{R_P}(M_P)+\dim(\widehat{R}_Q/P\widehat{R}_Q)\\
    		&=\ \dim_{R_P}(M_P).
    	\end{align*}
    
    	Similarly, \cite[Proposition 1.2.16(a)]{BH} gives $\depth_{\widehat{R}_Q}(\,\widehat{\!M\!}_Q)=\depth_{R_P}(M_P)$. Using that $\widehat{\!M\!}$ satisfies Serre's condition $(S_r)$, we obtain that
    	$$
    		\depth_{R_P}(M_P)\ =\ \depth_{\widehat{R}_Q}(\,\widehat{\!M\!}_Q)\ \ge\ \min\{r,\,\dim_{\widehat{R}_Q}(\,\widehat{\!M\!}_Q)\}\ =\ \min\{r,\,\dim_{R_P}(M_P)\}.
    	$$
    	Since $P\in\Supp_R(M)$ is arbitrary, it follows that $M$ satisfies Serre's condition $(S_r)$, as desired.
    \end{proof}
	
	However, the converse statement is false in general, as we show next. The idea is to find an example that has the same properties as Ogoma’s famous example of a normal Noetherian local domain that is not catenary. This was achieved in \cite{HRW}.
	\begin{Example}\label{Ex:HRW}
		\rm It is proved in \cite[Theorem 19.11]{HRW} by Heinzer, Rotthaus and Wiegand that there exists a $2$-dimensional Noetherian local domain $(R,\m)$ (hence $R$ is unmixed and satisfies $(S_1)$), such that $\widehat{R}$ does not satisfy Serre's condition $(S_1)$.
		
		By Cohen structure theorem, $\widehat{R}$ is a homomorphic image of a Gorenstein ring. Since $\widehat{R}$ does not satisfy $(S_1)$, Theorem \ref{Thm:Serre} implies that $S_1\text{-}\!\depth_{\widehat{R}}(\,\widehat{\!R\!}\,)<\dim(\,\widehat{\!R\!}\,)$. Theorem \ref{Thm:completion} and \cite[Corollary 2.1.8(a)]{BH} then imply that
		$$
		S_1\text{-}\!\depth_{R}(R)\ =\ S_1\text{-}\!\depth_{\widehat{R}}(\,\widehat{\!R\!}\,)\ <\ \dim(\,\widehat{\!R\!}\,)\ =\ \dim(R),
		$$
		but $R$ satisfies $(S_1)$. Hence, the converse of Theorem \ref{Thm:general-implication} is not true in general.
	\end{Example}
	
	If we consider only quotient rings instead of general modules, Theorem \ref{Thm:Serre} takes the following nicer form.
	\begin{Corollary}\label{Cor:Serre}
		Let $(R,\m)$ be a Noetherian local ring or a standard graded $K$-algebra. Assume that $R$ is a homomorphic image of a Gorenstein ring. Let $I\subset R$ be an ideal, which we assume is homogeneous if $R$ is a $K$-algebra. Let $r\ge2$. Then, the following conditions are equivalent.
		\begin{enumerate}
			\item[\textup{(a)}] $R/I$ satisfies Serre's condition $(S_r)$.
			\item[\textup{(b)}] $S_r\text{-}\!\depth(R/I)=\dim(R/I)$.
		\end{enumerate}
	\end{Corollary}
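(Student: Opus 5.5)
The implication (b) $\Rightarrow$ (a) is already contained in Theorem \ref{Thm:general-implication}, so only (a) $\Rightarrow$ (b) needs an argument. By Theorem \ref{Thm:Serre} it suffices to show the following: if $r\ge2$ and $A=R/I$ satisfies $(S_r)$, hence $(S_2)$, then $A$ is equidimensional. Once this is known, Theorem \ref{Thm:Serre} applied to $M=R/I$ gives $S_r\text{-}\!\depth(R/I)=\dim(R/I)$. So the plan is to prove a Hartshorne-type statement: a homomorphic image of a Gorenstein ring that is local (or graded with unique homogeneous maximal ideal) and satisfies $(S_2)$ is equidimensional.

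\textbf{Setup.} Since $A$ is again a homomorphic image of a Gorenstein ring, it is universally catenary, as shown in the proof of Lemma \ref{Lem:serre2}. The Gorenstein ring $S$ is Cohen--Macaulay, and local Cohen--Macaulay rings are equidimensional, so for every prime $Q\supseteq P$ of $A$,
$$
\dim(A/P)\ =\ \dim(A_Q/PA_Q)+\dim(A/Q).
$$
In other words, $A$ satisfies the dimension formula.

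\textbf{Argument by contradiction.} Suppose $A$ is not equidimensional, and let $d=\dim A$. By $(S_1)$ there are no embedded primes, so $0=J_1\cap J_2$, where $J_1$ is the intersection of the primary components belonging to minimal primes of dimension $d$, and $J_2$ is the intersection of the remaining ones. Both $J_1$ and $J_2$ are nonzero, and $J_1+J_2$ is proper because $A$ has a unique (homogeneous) maximal ideal. This gives the exact sequence
$$
0\rightarrow A\rightarrow A/J_1\oplus A/J_2\rightarrow A/(J_1+J_2)\rightarrow 0 .
$$
\begin{itemize}
\item Choose a prime $P$ that is minimal over $J_1+J_2$ and has $\dim A_P$ as small as possible, and localize at $P$.
\item The module $A_P/(J_1+J_2)_P$ has finite length and is nonzero. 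The long exact sequence of $\H_{PA_P}^\bullet$ then yields a nonzero map into $\H^1_{PA_P}(A_P)$, unless the $\H^0$ terms absorb it. They do not: $A/J_1$ and $A/J_2$ have no embedded primes, and $P$ is not minimal for either, so both $\H^0$ terms vanish. Hence $\H^1_{PA_P}(A_P)\ne0$, that is, $\depth A_P\le1$.
\item By $(S_2)$ this forces $\dim A_P\le1$. Since $P$ contains a minimal prime from each of $J_1$ and $J_2$, we get $\dim A_P=1$, and $P$ lies on a component of dimension $d$ and on a component of dimension $<d$.
\end{itemize}

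\textbf{Reaching a contradiction.} By the dimension formula, both components through $P$ have dimension $\dim(A/P)+1$. Since one of them has dimension $d$, so does the other. This contradicts the choice of $J_2$, which collects only components of dimension $<d$. This is where the hypothesis that $R$ is a homomorphic image of a Gorenstein ring is essential; Example \ref{Ex:HRW} shows that catenarity alone cannot be dispensed with.

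\textbf{Main obstacle.} The delicate step is justifying that every component through $P$ has dimension exactly $\dim(A/P)+\dim A_P$. This requires the equidimensionality of $S_Q$, i.e.\ the full dimension formula rather than mere catenarity. If needed, in the local case I would first pass to $\widehat{R}$ using Theorem \ref{Thm:completion} and the proof of Theorem \ref{Thm:general-implication}, and then argue inside a Gorenstein (even regular) ambient ring $S$. The graded case is handled with homogeneous primes in the same way.
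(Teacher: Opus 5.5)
Your proof is correct, but at the key step it takes a different route from the paper. Both arguments use Theorem \ref{Thm:Serre} to reduce to one claim: if $A=R/I$ satisfies $(S_2)$, then $A$ is equidimensional. The paper settles this in one line. It observes that $A$ is catenary, being a quotient of a universally catenary ring, and cites \cite[Corollary 5.10.9]{EGAIV65}, which is Hartshorne's connectedness in codimension one combined with catenarity.

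You prove the claim directly, in three steps:
\begin{enumerate}
\item[(1)] You use the sequence $0\to A\to A/J_1\oplus A/J_2\to A/(J_1+J_2)\to 0$.
\item[(2)] Localizing at a minimal prime $P$ of $J_1+J_2$ and using local cohomology, the $\H^0$ terms of $A_P/J_i$ vanish. Hence $A_P/(J_1+J_2)A_P$ embeds in $\H^1_{PA_P}(A_P)$, so $\depth A_P=1$, and $(S_2)$ then forces $\dim A_P=1$.
\item[(3)] Catenarity equates the dimensions of a top-dimensional component and a lower-dimensional component through $P$.
\end{enumerate}
This is in effect a self-contained proof of exactly the piece of Hartshorne's theorem that is needed. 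The paper's route is shorter but rests on an external reference. Yours is elementary, and it handles the graded case explicitly by working at the homogeneous prime $P$; the paper leaves this implicit when it invokes a local statement from EGA. (Choosing $P$ with $\dim A_P$ minimal is harmless but unnecessary: any minimal prime of $J_1+J_2$ works.)

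Two remarks on your commentary, neither of which affects the proof.

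First, your ``main obstacle'' is not actually an obstacle. For minimal primes $P_i\subsetneq P$, the equality $\dim(A/P_i)=\dim(A_P/P_iA_P)+\dim(A/P)$ is Lemma \ref{Lem:serre1} applied to the module $A/P_i$. It holds because $A$ is a catenary local ring, or, in the graded case, because $A/P_i$ is an affine domain. You need neither the equidimensionality of the Gorenstein ring $S$ nor a passage to $\widehat{R}$.

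Second, Example \ref{Ex:HRW} is unrelated to this step. In your argument the Gorenstein-image hypothesis is used only to get catenarity, which the paper also needs. Where that hypothesis is genuinely used is inside Theorem \ref{Thm:Serre}, for the implication from (a) to (b), and that is exactly the implication Example \ref{Ex:HRW} shows can fail without it.
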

    \begin{proof}
    	Let $r\ge2$. By Theorem \ref{Thm:Serre}, the following conditions are equivalent.
    	\begin{enumerate}
    		\item[\textup{(a)$'$}] $R/I$ is equidimensional and satisfies Serre's condition $(S_r)$.
    		\item[\textup{(b)}] $S_r\text{-}\!\depth(R/I)=\dim(R/I)$.
    	\end{enumerate}
        So, it remains to prove that the conditions (a) and (a)$'$ are in fact equivalent. It is clear that (a)$'$ $\Rightarrow$ (a). Conversely, assume that (a) holds. As noted before, $R$ is universally catenary, because it is the quotient of a Gorenstein ring. Since quotients of universally catenary rings are universally catenary, hence catenary, we have that $R/I$ is catenary. Since $r\ge2$, $R/I$ in particular satisfies Serre's condition $(S_2)$. It follows from \cite[Corollary 5.10.9]{EGAIV65} that $R/I$ is equidimensional.
    \end{proof}
	
	Unfortunately, for arbitrary finitely generated modules we can not remove the equidimensional condition in statement (a) of Theorem \ref{Thm:Serre}.
	\begin{Example}
		\rm Let $R=K[[u,v,w,x,y]]$ be a power series ring over a field $K$. Then $R$ is a complete ring. The finitely generated $R$-module $M=R/(u,v)\oplus R/(w,x,y)\cong K[[w,x,y]]\oplus K[[u,v]]$ satisfies $(S_2)$, but $S_2\text{-}\!\depth(M)=2<3=\dim(M)$. Note that $M$ is not equidimensional. 
	\end{Example}

    It was proved by Marley and Vassilev in \cite[Lemma 2.8]{MV} that if $R$ is a catenary local ring and $M$ is a finitely generated indecomposable $R$-module satisfying $(S_2)$, then $M$ is equidimensional. Hence, analogously to Corollary \ref{Cor:Serre} we have
    \begin{Corollary}\label{Cor:Serre1}
    	Let $(R,\m)$ be a Noetherian local ring or a standard graded $K$-algebra. Assume that $R$ is a homomorphic image of a Gorenstein ring. Let $M$ be an indecomposable finitely generated $R$-module, which we assume is homogeneous if $R$ is a $K$-algebra. Let $r\ge2$. Then, the following conditions are equivalent.
    	\begin{enumerate}
    		\item[\textup{(a)}] $M$ satisfies Serre's condition $(S_r)$.
    		\item[\textup{(b)}] $S_r\text{-}\!\depth(M)=\dim(M)$.
    	\end{enumerate}
    \end{Corollary}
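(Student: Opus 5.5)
The plan mirrors the proof of Corollary \ref{Cor:Serre}: we reduce to Theorem \ref{Thm:Serre} and handle the equidimensionality hypothesis separately, this time using the result of Marley and Vassilev \cite[Lemma 2.8]{MV} quoted just before the statement.

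By Theorem \ref{Thm:Serre}, condition (b) is equivalent to the condition
(a)$'$: $M$ is equidimensional and satisfies $(S_r)$.
Since (a)$'$ trivially implies (a), it only remains to show that (a) implies (a)$'$. In other words, we must show that an indecomposable finitely generated $M$ satisfying $(S_r)$ with $r\ge2$ is equidimensional. Since $r\ge2$, $M$ satisfies $(S_2)$. As observed in the proof of Lemma \ref{Lem:serre2}, $R$ is universally catenary, hence catenary, because it is a homomorphic image of a Gorenstein ring.

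In the local case, \cite[Lemma 2.8]{MV} applies directly to the catenary local ring $R$ and the indecomposable module $M$ satisfying $(S_2)$, and gives that $M$ is equidimensional. This completes the local case.

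The main obstacle is the graded case. There $R$ is a standard graded $K$-algebra and $M$ is indecomposable only as a graded module, while the cited lemma is stated for local rings. I would pass to the localization $R_\m$ and $M_\m$, and reduce in three steps.

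\begin{enumerate}
\item[(1)] $(S_r)$ for $M$ implies $(S_r)$ for $M_\m$. This is immediate from the definition, since the localizations of $M_\m$ at primes of $R_\m$ are localizations of $M$.
\item[(2)] Equidimensionality of $M_\m$ implies equidimensionality of $M$. The associated primes of $M$ are homogeneous, hence contained in $\m$. So $\Ass(M)$ corresponds bijectively to $\Ass(M_\m)$, and $\dim(R/P)=\dim(R_\m/PR_\m)$ for homogeneous $P$.
\item[(3)] $M_\m$ is indecomposable. This step needs care: graded indecomposability does not obviously pass to $R_\m$. My first approach is to argue with the Marley--Vassilev method rather than the statement itself. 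Their argument splits an $(S_2)$, non-equidimensional module as a direct sum of the pieces supported on the different dimensional strata, via a connectedness/Hartshorne-type argument. In the graded setting the decomposition can be chosen graded, because the relevant submodules are given by $\H^0_J(M)$-type constructions with homogeneous ideals $J$, and these are graded submodules.
\end{enumerate}

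If step (3) is handled this way, the graded decomposition contradicts graded indecomposability of $M$, so $M$ is equidimensional. Then (a)$'$ holds, and Theorem \ref{Thm:Serre} gives (b).
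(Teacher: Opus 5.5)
Your proposal follows the paper's argument: Theorem~\ref{Thm:Serre} reduces (b) to ``$M$ is equidimensional and satisfies $(S_r)$'', and \cite[Lemma 2.8]{MV} supplies equidimensionality from indecomposability and $(S_2)$. That lemma applies because $R$ is catenary, being a homomorphic image of a Gorenstein ring. The paper applies this local lemma in the graded case without comment, and your graded fix is sound: the Hartshorne-type splitting $M=\H^0_I(M)\oplus\H^0_J(M)$ behind this kind of result (Mayer--Vietoris together with $\H^0_{I+J}(M)=\H^1_{I+J}(M)=0$) uses ideals $I,J$ that are intersections of associated primes of $M$. These ideals are therefore homogeneous, so you can run the argument directly on the graded module $M$, and your steps (1)--(2) become unnecessary.
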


    We point out that the Corollaries \ref{Cor:Serre} and \ref{Cor:Serre1} are independent statements. In fact, a quotient ring can be a decomposable module over its ambient ring.
	
    \section{Gr\"obner degenerations}\label{sec4}
    
    In this section, we consider the more familiar situation of the standard graded polynomial ring $S=K[x_1,\dots,x_n]$ over a field $K$. First, we establish Theorem \ref{ThmD}.
    
    \begin{Theorem}\label{Thm:IneqMonOrd}
    	Let $S=K[x_1,\dots,x_n]$ be the standard graded polynomial ring over a field $K$, let $I\subset S$ be a homogeneous ideal, and let $<$ be a monomial order on $S$. Then
    	$$
    	S_r\text{-}\!\depth(S/\ini_<(I))\ \le\ S_r\text{-}\!\depth(S/I),
    	$$
    	for all $r\ge1$. Furthermore, equality holds if $\ini_<(I)$ is squarefree.
    \end{Theorem}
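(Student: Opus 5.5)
We translate both Serre depths into statements about dimensions of Ext modules via graded local duality. By equation (\ref{eq:iso-graded-D}), $\D(\H_\m^j(S/J))\cong\Ext^{n-j}_S(S/J,S)$ for any homogeneous ideal $J$. Hence
$$
S_r\text{-}\!\depth(S/J)\ =\ \min\{j:\ \dim\Ext^{n-j}_S(S/J,S)\ge j-r+1\}.
$$
The inequality will then follow from a semicontinuity statement for the dimensions of these Ext modules along the Gröbner degeneration:
$$
\dim\Ext^{i}_S(S/I,S)\ \le\ \dim\Ext^{i}_S(S/\ini_<(I),S),\quad\text{for all } i.
$$
Indeed, if $j=S_r\text{-}\!\depth(S/I)$, then $\dim\Ext^{n-j}_S(S/\ini_<(I),S)\ge\dim\Ext^{n-j}_S(S/I,S)\ge j-r+1$, so $S_r\text{-}\!\depth(S/\ini_<(I))\le j$.

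To prove the dimension inequality, we use the standard flat family. Choose a weight vector $\omega$ with $\ini_\omega(I)=\ini_<(I)$, and form the homogenization $\widetilde I\subset S[t]$. Then $A=S[t]/\widetilde I$ is flat over $K[t]$, and it is graded with $t$ of degree one after a suitable regrading. Its fibers are $A/(t)\cong S/\ini_<(I)$ and $A/(t-a)\cong S/I$ for $a\ne0$, and $t$ is $A$-regular. Set $E^i=\Ext^i_{S[t]}(A,S[t])$; these are finitely generated and bigraded. From $0\to A\xrightarrow{t}A\to A/tA\to0$ and $\Ext^{i+1}_{S[t]}(A/tA,S[t])\cong\Ext^i_S(S/\ini_<(I),S)$, we get injections
$$
E^i/tE^i\hookrightarrow\Ext^i_S(S/\ini_<(I),S).
$$
On the other side, $E^i[t^{-1}]\cong\Ext^i_S(S/I,S)\otimes_K K[t,t^{-1}]$ by flat base change, since $A[t^{-1}]\cong (S/I)[t,t^{-1}]$ via the automorphism rescaling variables. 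Thus $\dim E^i[t^{-1}]=\dim\Ext^i_S(S/I,S)+1$.

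The grading makes $t$ behave like a positive-degree element of a $*$local ring, so for graded $E^i$ we have $\dim E^i/tE^i\ge\dim E^i-1\ge\dim E^i[t^{-1}]-1$. Combining these gives the desired inequality. The delicate point is ensuring $\dim(E^i/tE^i)\ge\dim E^i-1$ even when $t$ is nilpotent on part of $E^i$. This is handled by the graded Krull principal ideal theorem for finitely generated graded modules over $S[t]$ with $t$ in the homogeneous maximal ideal. The same argument is in Sbarra / Herzog–Hibi's proof of upper semicontinuity of Ext (local cohomology) under Gröbner degeneration, which I would cite or reproduce.

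For the equality case, assume $\ini_<(I)$ is squarefree. By Conca–Varbaro (squarefree Gröbner degenerations), the extremal Betti numbers and indeed the Hilbert functions of local cohomology agree: $\dim_K\H_\m^j(S/I)_k=\dim_K\H_\m^j(S/\ini_<(I))_k$ for all $j,k$. Dimensions of the dual Ext modules are not read off from Hilbert functions alone, though. So we strengthen this using their stronger statement that the Ext modules $\Ext^i_S(S/I,S)$ and $\Ext^i_S(S/\ini_<(I),S)$ have the same Hilbert function. Their proof shows $E^i$ is $t$-torsion-free and the injection above is an isomorphism. Then $E^i$ is flat over $K[t]$ with fibers $\Ext^i_S(S/I,S)$ and $\Ext^i_S(S/\ini_<(I),S)$. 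Equal Hilbert functions of graded modules force equal Krull dimension, as the degree of the Hilbert polynomial plus one. Hence all $\dim\D(\H_\m^j)$ coincide, and the Serre depths are equal.

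The main obstacle is this equality step: one must extract from Conca–Varbaro that the injections $E^i/tE^i\to\Ext^i_S(S/\ini_<(I),S)$ are isomorphisms. Equivalently, one must show $\dim_K\Ext^i_S(-,S)_k$ is preserved, not merely the local cohomology Hilbert functions. I would first try deducing it directly from Hilbert function equality via graded Matlis duality, since $\Ext^{n-j}_S(S/J,S)_k\cong\Hom_K(\H_\m^j(S/J)_{-k-n},K)$. That gives equal Hilbert functions of the Ext modules immediately, and hence equal dimensions.
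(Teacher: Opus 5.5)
Your argument is correct, but for the inequality it takes a different route from the paper. The paper never works with the Gr\"obner family directly. It quotes Sbarra's componentwise bound $\dim_K\H_\m^j(S/I)_i\le\dim_K\H_\m^j(S/\ini_<(I))_i$, transfers it by graded Matlis duality to the Hilbert functions of $\D_S(\H_\m^j(-))$, and then compares degrees of Hilbert polynomials.

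You instead prove only the needed dimension inequality $\dim\Ext^i_S(S/I,S)\le\dim\Ext^i_S(S/\ini_<(I),S)$, directly from the flat family, and each step holds:
\begin{itemize}
\item Rees' lemma gives $E^i/tE^i\hookrightarrow\Ext^i_S(S/\ini_<(I),S)$.
\item Flat base change, twisted by the rescaling automorphism, gives $\dim E^i\ge\dim E^i[t^{-1}]=\dim\Ext^i_S(S/I,S)+1$.
\item The bound $\dim E^i/tE^i\ge\dim E^i-1$ holds because the dimension of a finitely generated graded module can be computed after localizing at the homogeneous maximal ideal, which contains $t$. For this, take $\omega$ with positive entries, which is possible since $I$ is homogeneous.
\end{itemize}
Your route is self-contained and uses only support information along the degeneration, not semicontinuity of individual graded pieces. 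The paper's route is a short reduction to a stronger known numerical result, and it handles the inequality and the equality case by the same reasoning.

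For the equality case, your final plan is the paper's. Conca--Varbaro give equal Hilbert functions of $\H_\m^j$; graded duality converts this into equal Hilbert functions of $\Ext^{n-j}_S(-,S)$; and Hilbert polynomials then give equal dimensions. Your intermediate claim that these dimensions ``are not read off from Hilbert functions alone'' is mistaken, and so is the claim that you need a ``stronger statement'' such as $t$-torsion-freeness of $E^i$. By graded duality the two Hilbert-function statements are equivalent, so this step is not an obstacle and the detour through flatness of $E^i$ should be dropped.
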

    \begin{proof}
    	It is well-known (see, for instance, the paper of Sbarra \cite[Theorem 2.4]{Sb}) that we have the following inequality of dimensions os $K$-vector spaces:
    	\begin{equation}\label{eq:dimSbarra1}
    	\dim_K\H_\m^j(S/I)_i\ \le\ \dim_K\H_\m^j(S/\ini_<(I))_i,\quad\textup{for all}\ i\in\ZZ,\ j\ge0.
    	\end{equation}
    
    	For a $K$-vector space $V$, the graded Matlis $\D_S(V)=\Hom_K(V,K)$ preserves the $K$-vector space dimension. Hence, formulas (\ref{eq:gradedPieceMatlis}) and (\ref{eq:dimSbarra1}) give that
    	\begin{equation}\label{eq:dimSbarra}
    	\dim_K\D_S(\H_\m^j(S/I))_i\ \le\ \dim_K\D_S(\H_\m^j(S/\ini_<(I)))_i,\quad\textup{for all}\ i\in\ZZ,\ j\ge0.
    	\end{equation}
    	
    	The Hilbert function of a finitely generated graded $S$-module $M=\bigoplus_{i\in\ZZ}M_i$ is the function $i\in\ZZ\mapsto\Hilb(M,i)=\dim_K(M_i)$. By \cite[Theorem 4.1.3]{BH}, $\Hilb(M,i)$ eventually agrees with a polynomial $P(x)\in\QQ[x]$ of degree $\dim_S(M)-1$: that is, $\Hilb(M,i)=P(i)$ for all $i\gg0$. Hence, formula (\ref{eq:dimSbarra}) shows that
    	$$
    	\dim_S\D_S(\H_\m^j(S/I))\ \le\ \dim_S\D_S(\H_\m^j(S/\ini_<(I))),\quad\textup{for all}\ j\ge0.
    	$$
    	This yields at once that $S_r\text{-}\!\depth(S/\ini_<(I))\le S_r\text{-}\!\depth(S/I)$, for all $r\ge1$.
    	
    	Finally, it was shown by Conca and Varbaro (see \cite[Theorem 1.3]{CV}) that if in addition $\ini_<(I)$ is a squarefree monomial ideal, then (\ref{eq:dimSbarra1}) is an equality for all $i\in\ZZ$ and $j\ge0$, and in this case by the same reasoning as above we obtain the equality  $S_r\text{-}\!\depth(S/\ini_<(I))=S_r\text{-}\!\depth(S/I)$, for all $r\ge1$.
    \end{proof}
    
    As a consequence, we obtain a new proof of the following well-known fact.
    
    \begin{Corollary}
    	Let $S=K[x_1,\dots,x_n]$ and $I\subset S$ be a homogeneous ideal. Suppose that $\ini_{<}(I)$ satisfies Serre's condition $(S_r)$ for some monomial order $<$. Then $I$ satisfies Serre's condition $(S_r)$, too. The converse holds if $\ini_<(I)$ is squarefree for some monomial order $<$.
    \end{Corollary}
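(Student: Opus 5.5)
The plan is to combine Theorem \ref{Thm:IneqMonOrd} with Theorem \ref{Thm:Serre}, using Corollary \ref{Cor:Serre} to remove the equidimensionality hypothesis. Here "$I$ satisfies $(S_r)$" is read as "$S/I$ satisfies $(S_r)$", and likewise for $\ini_<(I)$. Both $S/I$ and $S/\ini_<(I)$ are quotients of the polynomial ring $S$, which is Gorenstein. Hence Theorem \ref{Thm:Serre} and Corollary \ref{Cor:Serre} apply to both of them.

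First I record the dimension equality. Since $I$ and $\ini_<(I)$ have the same Hilbert function, we have $\dim(S/I)=\dim(S/\ini_<(I))$.

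\textbf{Forward direction, $r\ge2$.} Suppose $S/\ini_<(I)$ satisfies $(S_r)$. Corollary \ref{Cor:Serre} gives $S_r\text{-}\!\depth(S/\ini_<(I))=\dim(S/\ini_<(I))$. Theorem \ref{Thm:IneqMonOrd} and Proposition \ref{Prop:inequalities}(a) then give
$$\dim(S/I)=\dim(S/\ini_<(I))=S_r\text{-}\!\depth(S/\ini_<(I))\le S_r\text{-}\!\depth(S/I)\le\dim(S/I).$$
So all terms are equal, and Corollary \ref{Cor:Serre} shows that $S/I$ satisfies $(S_r)$.

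\textbf{Forward direction, $r=1$.} This is the main obstacle, because Theorem \ref{Thm:Serre} also requires equidimensionality. Suppose $S/\ini_<(I)$ satisfies $(S_1)$. If it is also equidimensional, the same chain of inequalities works verbatim with Theorem \ref{Thm:Serre} in place of Corollary \ref{Cor:Serre}. In general, $(S_1)$ alone does not imply equidimensionality, so I would handle this case by a direct argument rather than through Serre depth.

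Suppose $P\in\Ass(S/I)$ is embedded. Then some non-zero local cohomology module $\D(\H_\m^j(S/I))$ has dimension $j$ with $j<\dim(S/P')$ for a minimal prime $P'$. Transfer this to $\ini_<(I)$ via the componentwise inequality (\ref{eq:dimSbarra}), which gives a prime in the support of $\D(\H_\m^j(S/\ini_<(I)))$ of dimension $j$. Then argue as in the proof of Lemma \ref{Lem:serre3}, localized at that prime, to produce an embedded prime of $\ini_<(I)$.

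This step is the delicate one. If it does not go through, I would restrict the statement to the equidimensional case when $r=1$, which is how the equivalences are phrased elsewhere in the paper.

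\textbf{Converse.} Assume $\ini_<(I)$ is squarefree. Then Theorem \ref{Thm:IneqMonOrd} gives equality of all Serre depths. Since the dimensions also agree, $S_r\text{-}\!\depth(S/I)=\dim(S/I)$ holds if and only if the same equality holds for $S/\ini_<(I)$.

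For $r\ge2$, Corollary \ref{Cor:Serre} translates this directly into the equivalence of $(S_r)$ for the two quotients.

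For $r=1$, I would use that for squarefree initial ideals, equidimensionality also transfers. This follows from Conca--Varbaro: the equality of the local cohomology Hilbert functions identifies the dimensions of $\Ext^i_S(-,S)$, and equidimensionality is read off from these. Theorem \ref{Thm:Serre} then concludes.
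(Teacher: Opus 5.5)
\textbf{Where the proposal fails: $r=1$.} Your argument for $r\ge2$ is correct in both directions and is the paper's argument, with one repair the paper needs: the paper applies Theorem~\ref{Thm:Serre} to $S/\ini_<(I)$, which presupposes equidimensionality, whereas Corollary~\ref{Cor:Serre} does not.

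The genuine gap is the forward direction for $r=1$. Take an embedded $P\in\Ass(S/I)$ with $\dim S/P=j$. Inequality~(\ref{eq:dimSbarra}) then only gives $\dim\D(\H_\m^j(S/\ini_<(I)))=j$, that is, some $Q\in\Ass(S/\ini_<(I))$ with $\dim S/Q=j$. Nothing in this numerical data says whether $Q$ is embedded or minimal. For instance, in $S=K[x,y,z]$ consider $S/(x^2,xy)$, which has the embedded prime $(x,y)$, and $S/(xy,xz)=S/((x)\cap(y,z))$, which has no embedded prime. Their local cohomology modules have the same Hilbert function in every cohomological degree. So no argument using only~(\ref{eq:dimSbarra}), even with equality, can tell them apart.

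Lemma~\ref{Lem:serre3} does not help either. It turns a low-dimensional associated prime into $S_1\text{-}\!\depth<\dim$, so it detects failure of unmixedness, not failure of $(S_1)$. Your fallback of assuming equidimensionality when $r=1$ proves a strictly weaker statement. That weaker statement is also all the paper's Serre-depth argument actually covers when $r=1$.

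\textbf{The missing idea.} You need control of \emph{incidence}, which requires the flat family itself, not just its numerical invariants.
Pick $w$ with $\ini_w(I)=\ini_<(I)$, let $\tilde I\subset S[t]$ be the $w$-homogenization, and set $A=S[t]/\tilde I$. Then $t$ is $A$-regular, $A/tA\cong S/\ini_<(I)$, and $A_t\cong(S/I)[t,t^{-1}]$. Hence homogenization $P\mapsto\tilde P$ sends $\Ass(S/I)$ into $\Ass(A)$, preserves inclusions, and satisfies $\dim S[t]/\tilde P=\dim S/P+1$.

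Now suppose $P'\subsetneq P$ with $P\in\Ass(S/I)$ and $P'$ minimal, and let $\mathfrak Q$ be a minimal prime of $\tilde P+(t)$.
\begin{enumerate}
\item Since $\tilde P A_{\mathfrak Q}$ is associated, $\depth A_{\mathfrak Q}\le\dim A_{\mathfrak Q}/\tilde PA_{\mathfrak Q}=1$. Hence $\depth(A/tA)_{\mathfrak Q}=0$, so $\mathfrak Q/(t)$ is an associated prime of $S/\ini_<(I)$.
\item By Krull's principal ideal theorem in the affine domain $S[t]/\tilde P$, the prime $\mathfrak Q/(t)$ has dimension $\dim S/P$.
\item $\mathfrak Q$ contains a minimal prime of $\tilde P'+(t)$. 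That prime lies in $\Supp(A/tA)$ and, by the same principal ideal argument in $S[t]/\tilde P'$, has dimension $\dim S/P'>\dim S/P$.
\end{enumerate}
So $\mathfrak Q/(t)$ is an embedded prime of $S/\ini_<(I)$, which is what the forward direction for $r=1$ requires.

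\textbf{The converse for $r=1$.} This needs no work: a squarefree monomial ideal is radical, so $S/\ini_<(I)$ trivially satisfies $(S_1)$. Your justification there is not valid as stated. By the same pair of examples, equidimensionality is not determined by the dimensions of the modules $\Ext^i_S(-,S)$. Moreover, Theorem~\ref{Thm:Serre} would require $S/I$ to be equidimensional, which is not assumed.
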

    \begin{proof}
    	By assumption and Theorem \ref{Thm:Serre}, $S_r\text{-}\!\depth(S/\ini_<(I))=\dim(S/\ini_<(I))$. It is well-known that $\dim(S/I)=\dim(S/\ini_<(I))$, see \cite[Theorem 3.3.4(a)]{HHBook}. Combining this fact with Theorem \ref{Thm:IneqMonOrd} and Proposition \ref{Prop:inequalities}(a) we obtain that
    	$$
    	\dim(S/I)=S_r\text{-}\!\depth(S/\ini_<(I))\le S_r\text{-}\!\depth(S/I)\le\dim(S/I).
    	$$
    	Hence $S_r\text{-}\!\depth(S/I)=\dim(S/I)$, which by Theorem \ref{Thm:Serre} indeed means that $S/I$ satisfies Serre's condition $(S_r)$.
    	
    	Conversely, if $\ini_<(I)$ is squarefree the assertion holds because by Theorem \ref{Thm:IneqMonOrd} the Serre depths of $S/I$ and $S/\ini_<(I)$ coincide.
    \end{proof}

    Theorem \ref{Thm:IneqMonOrd} shows that in order to estimate the Serre depth of a homogeneous ideal of $S$, the Serre depth of any of its initial ideals serves as a lower bound. This leads us to the problem of computing the Serre depths of monomial ideals.
    
    Muta and Terai gave a nice formula (\cite[Theorem 3.12]{MT}) for the Serre depth of a squarefree monomial ideal $I\subset S$, in terms of the skeletons of the simplicial complex associated to $I$. This formula in turn generalizes a well-known formula for the depth of $S/I$, with $I\subset S$ squarefree, due to Smith in the pure case (\cite[Theorem 3.7]{Smith}) and independently to Hibi in general (\cite[Corollary 2.6]{Hibi}).
    
    Such a formula for the depth was generalized to any monomial ideal, not necessarily squarefree, by Herzog, Soleyman Jahan and Zheng (\cite[Corollary 1.5]{HSZ}).  Our next goal is to establish a similar formula for the Serre depths of any monomial ideal. To this end, we need to recall a few concepts.
    
    For an integer $n\ge1$, we put $[n]=\{1,\dots,n\}$. Given a monomial $u\in S$, the integer
    $$
    \deg_{x_i}(u)\ =\ \max\{j:\ x_i^j\ \textup{divides}\ u\}
    $$
    is called the \textit{$x_i$-degree} of $u$. For a monomial ideal $I\subset S$, let $\mathcal{G}(I)$ be the unique minimal monomial generating set of $I$. The \textit{bounding multidegree} of $I\subset S$ is the vector
    $$
    {\bf deg}(I)\ =\ (\deg_{x_1}(I),\dots,\deg_{x_n}(I))
    $$
    defined by
    $$
    \deg_{x_i}(I)\ =\ \max_{u\in\mathcal{G}(I)}\deg_{x_i}(u),\quad\text{for all}\ i\in[n].
    $$
    
    Given ${\bf a}=(a_1,\dots,a_n)\in\ZZ^n$, we put ${\bf a}(i)=a_i$ for all $i\in[n]$. Given ${\bf a},{\bf b}\in\ZZ^n$, we write ${\bf a}\ge{\bf b}$ (respectively, ${\bf a}\le{\bf b}$) if ${\bf a}(i)\ge{\bf b}(i)$ (respectively, ${\bf a}(i)\le{\bf b}(i)$) for all $i\in[n]$. Given ${\bf a}\in\NN^n$, we put ${\bf x^a}=\prod_{i\in[n]}x_i^{{\bf a}(i)}$. Here, by $\NN$ we denote as usual the set $\{0,1,2,\dots\}$ of the non-negative integers.
    
    Now, let $I\subset S$ be a monomial ideal, and let ${\bf g}\in\NN^n$ such that ${\bf g}\ge{\bf deg}(I)$. Let $\rho_{\bf g}:\NN^n\rightarrow\NN$ be the function defined by setting
    $$
    \rho_{\bf g}({\bf a})\ =\ |\{s\in[n]:\ {\bf g}(s)={\bf a}(s)\}|,\quad\textup{for all}\ {\bf a}\in\NN^n.
    $$
    
    Let $d=\dim(S/I)$. Following \cite{HSZ}, associated to $I$ and ${\bf g}$ we have the filtration
    $$
    \Sigma_{d}^{\bf g}(I)\subset\Sigma_{d-1}^{\bf g}(I)\subset\cdots\subset \Sigma_{0}^{\bf g}(I)\subset S,
    $$
    where the monomial ideal
    $$
    \Sigma_{i}^{\bf g}(I)\ =\ I+({\bf x^a}:\ {\bf a}\in\NN^n,\ \rho_{\bf g}({\bf a})>i)
    $$
    is called the $i$-th \textit{skeleton} of $I$ with respect to ${\bf g}$. Note that $\Sigma_d^{\bf g}(I)=I$, and moreover $\dim(S/\Sigma_i^{\bf g}(I))=i$, for all $i=0,\dots,d$.
    
    When $I$ is squarefree, writing $I=I_\Delta$ as the Stanley-Reisner ideal of a unique simplicial complex $\Delta$ on $[n]$, and letting ${\bf g}={\bf 1}=(1,1,\dots,1)$, then $\Sigma_{i}^{\bf 1}(I)=I_{\Delta^{(i)}}$ where $\Delta^{(i)}=\{F\in\Delta:|F|\le i\}$ is the $i$-th skeleton of $\Delta$.
    
    As a vast generalization of \cite[Theorem 3.12]{MT}, and the results \cite[Theorem 3.7]{Smith}, \cite[Corollary 2.6]{Hibi} and \cite[Corollary 1.5]{HSZ} mentioned before, we prove Theorem \ref{ThmE}.
    
    \begin{Theorem}\label{Thm:Skeletons}
    	Let $I\subset S$ be a monomial ideal. Then
    	$$
    	S_r\text{-}\!\depth(S/I)\ =\ \max\{i:\ S/\Sigma_i^{\bf g}(I)\ \textit{satisfies Serre's condition}\ (S_r)\},
    	$$
    	for all $r\ge1$ and all ${\bf g}\in\NN^n$ with ${\bf g}\ge{\bf deg}(I)$. Furthermore, $S/\Sigma_{j}^{\bf g}(I)$ satisfies Serre's condition $(S_r)$ for all $j\le S_r\text{-}\!\depth(S/I)$.
    \end{Theorem}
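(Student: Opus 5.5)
The plan is to reduce everything to a comparison of the local cohomology of $S/I$ with that of its skeletons, in the style of Herzog, Soleyman Jahan and Zheng. The key input I would establish first, via Takayama's formula, is the following. For every $i$ and $j$ with $j\le i$,
$$
\H_\m^j(S/\Sigma_i^{\bf g}(I))\ \cong\ \H_\m^j(S/I)\quad\text{when}\ j<i,
$$
while for $j=i$ the module $\H_\m^i(S/\Sigma_i^{\bf g}(I))$ maps onto (or contains a copy of) $\H_\m^i(S/I)$ with controlled kernel. For $j>i$ the cohomology $\H_\m^j(S/\Sigma_i^{\bf g}(I))$ vanishes, since $\dim S/\Sigma_i^{\bf g}(I)=i$.

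The natural way to get these isomorphisms is to compare multigraded pieces. The degree ${\bf a}$ piece of $\H_\m^j(S/I)$ is the reduced cohomology of a complex $\Delta_{\bf a}(I)$. The point of taking ${\bf g}\ge{\bf deg}(I)$ is that, for the relevant ${\bf a}$ (those with ${\bf a}(s)\le{\bf g}(s)-1$ on the nonnegative coordinates), one has $\Delta_{\bf a}(\Sigma_i^{\bf g}(I))=\Delta_{\bf a}(I)^{(i)}$, the $i$-skeleton. Reduced homology of a complex agrees with that of its $i$-skeleton in degrees $<i-1$, and in degree $i-1$ there is a surjection or injection. Since ${\bf deg}(\Sigma_i^{\bf g}(I))\le{\bf g}$ as well, the degrees with some ${\bf a}(s)\ge{\bf g}(s)$ reduce to these by the standard translation argument. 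This piece is essentially \cite{HSZ}'s argument, and I would cite or adapt it.

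Granting this, I would pass to Matlis duals. Then $\dim\D(\H_\m^j(S/\Sigma_i^{\bf g}(I)))=\dim\D(\H_\m^j(S/I))$ for $j<i$. For $j=i$ the dual of $\H_\m^i(S/\Sigma_i)$ has dimension exactly $i$: the top cohomology of an $i$-dimensional module dualizes to a module of dimension $i$, because the attached primes of $\H_\m^i$ are the minimal primes of dimension $i$. Now Corollary \ref{Cor:Serre} or Theorem \ref{Thm:Serre} applies. The ring $S/\Sigma_i^{\bf g}(I)$ satisfies $(S_r)$ together with equidimensionality iff $S_r\text{-}\depth=i$, i.e.\ iff $\dim\D(\H_\m^j(S/\Sigma_i))<j-r+1$ for all $j<i$. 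For the equidimensionality point I would check that every minimal prime of a skeleton has dimension $i$ (the facets of $\Delta^{(i)}$ for large ${\bf g}$); otherwise I would use the Corollary when $r\ge2$.

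By the isomorphisms, the condition in the previous paragraph is equivalent to $\dim\D(\H_\m^j(S/I))<j-r+1$ for all $j<i$, which in turn is equivalent to $i\le S_r\text{-}\depth(S/I)$. This gives both the maximum formula and the final statement that $S/\Sigma_j^{\bf g}(I)$ satisfies $(S_r)$ for all $j\le S_r\text{-}\depth(S/I)$. The maximum is attained because $i=S_r\text{-}\depth(S/I)\le d$ qualifies. Conversely, if $i>S_r\text{-}\depth(S/I)=s$, then $s<i$ and the witness degree $j=s$ transfers.

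The main obstacle is the first step: proving rigorously that the multigraded pieces of the skeleton's local cohomology are governed by skeletons of the degree complexes, uniformly in all ${\bf a}\in\ZZ^n$. This includes the bookkeeping for negative coordinates and for coordinates reaching ${\bf g}$. I would handle it by checking the facet description of $\Delta_{\bf a}$ from Takayama's formula directly.
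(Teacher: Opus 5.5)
You have a genuine gap at $r=1$, and it sits in the equidimensionality step you treat as a side remark. Skeleta need not be equidimensional: already $\Sigma_d^{\bf g}(I)=I$ is mixed whenever $I$ is. So your planned check that every minimal prime of $\Sigma_i^{\bf g}(I)$ has dimension $i$ fails, and your fallback, Corollary~\ref{Cor:Serre}, needs $r\ge2$. For $r=1$ nothing can close the gap, because the maximum formula is then false as stated. Take $I=(xy,xz)=(x)\cap(y,z)\subset K[x,y,z]$. Here $d=2$, and $\Sigma_2^{\bf g}(I)=I+({\bf x}^{\bf g})=I$ for every ${\bf g}\ge{\bf deg}(I)=(1,1,1)$. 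Since $S/I$ has no embedded primes, it satisfies $(S_1)$, so the right-hand side equals $2$. But $S/I$ is not unmixed, so $S_1\text{-}\!\depth(S/I)<2$ by Lemma~\ref{Lem:serre3}. The same failure occurs for every $I_\Delta$ with $\Delta$ non-pure, since $S/I_\Delta$ is reduced.

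What your argument does prove, correctly, is that for $0\le i\le d$ the ring $S/\Sigma_i^{\bf g}(I)$ is equidimensional and satisfies $(S_r)$ if and only if $i\le S_r\text{-}\!\depth(S/I)$. With Corollary~\ref{Cor:Serre} this gives the theorem as stated for $r\ge2$, and the ``furthermore'' part for every $r$. The paper's proof has the same defect: its step (ii) applies Theorem~\ref{Thm:Serre} to the bare hypothesis that $S/I$ satisfies $(S_r)$.

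Apart from this, your route differs from the paper's. The paper inducts on $d$. It feeds $0\to\Sigma_{d-1}/I\to S/I\to S/\Sigma_{d-1}\to0$, with $\Sigma_{d-1}/I$ Cohen--Macaulay of dimension $d$ by \cite[Theorem 1.2]{HSZ}, into the inequalities of Proposition~\ref{prop:shortexact}. You instead compare $S/I$ with all skeleta at once and apply Theorem~\ref{Thm:Serre} to each $S/\Sigma_i^{\bf g}(I)$, which is more transparent.

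Your first step needs no new Takayama computation. Write $\Sigma_k=\Sigma_k^{\bf g}(I)$ and chain the long exact sequences of $0\to\Sigma_{k-1}/\Sigma_k\to S/\Sigma_k\to S/\Sigma_{k-1}\to0$ for $k=d,\dots,i+1$, using the same result of \cite{HSZ}. This gives $\H_\m^j(S/I)\cong\H_\m^j(S/\Sigma_i)$ for all $j<i$ directly. If you do use Takayama, correct one point: in the relevant range of ${\bf a}$, the complex $\Delta_{\bf a}(\Sigma_i)$ consists of the faces of $\Delta_{\bf a}(I)$ with at most $i-|G_{\bf a}|$ vertices, not the $i$-skeleton. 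That shift is exactly what matches the homological index $j-|G_{\bf a}|-1$ with the range $j<i$.
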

    \begin{proof}
    	Let $d=\dim S/I$ and $t=S_r\text{-}\!\depth(S/I)$. We proceed by induction on $d\ge0$. For $d=0$, the statements hold vacuously. Let $d>0$. To simplify the notation, we put $\Sigma_j=\Sigma_j^{\bf g}(I)$ for all $j=0,\dots,d$. We claim that the following statements hold.
    	\begin{enumerate}
    		\item[(i)]  If $t<d$, then $S_r\text{-}\!\depth(S/\Sigma_{d-1})=t$.
    		\item[(ii)] If $S/I$ satisfies Serre's condition $(S_r)$, then $S/\Sigma_{d-1}$ satisfies  $(S_r)$, too.
    	\end{enumerate}
    	
    	\textit{Proof of} (i): Applying Proposition \ref{Prop:inequalities}(\ref{eq:Sr-ses2})-(b) to the short exact sequence
    	$$
    	0\rightarrow\Sigma_{d-1}/I\rightarrow S/I\rightarrow S/\Sigma_{d-1}\rightarrow0
    	$$
    	we see that
    	\begin{equation}\label{eq:S_rdepth-Skel}
    		S_r\text{-}\!\depth(S/\Sigma_{d-1})\ \ge\ \min\{S_{r+1}\text{-}\!\depth(\Sigma_{d-1}/I)-1,\, S_r\text{-}\!\depth(S/I)\},
    	\end{equation}
    	with equality if $S_r\text{-}\!\depth(S/I)<S_{r+1}\text{-}\!\depth(\Sigma_{d-1}/I)-1$.
    	
    	By \cite[Theorem 1.2]{HSZ}, $\Sigma_{d-1}/\Sigma_d=\Sigma_{d-1}/I$ is a Cohen-Macaulay module of dimension $d$. Hence, Example \ref{Ex:(i)-(vi)}(ii) implies that $S_{r+1}\text{-}\!\depth(\Sigma_{d-1}/I)-1=d-1$. It follows that $S_{r}\text{-}\!\depth(S/\Sigma_{d-1})=t$, if $t<d-1$. On the other hand, if
    	$t=d-1$, then $S_{r}\text{-}\!\depth(S/\Sigma_{d-1})\ge d-1$. However, since $\dim(S/\Sigma_{d-1})= d-1$, we obtain again that $S_{r}\text{-}\!\depth(S/\Sigma_{d-1})=d-1=t$.\hfill$\square$\smallskip
    	
    	\textit{Proof of} (ii):  If $S/I$ satisfies Serre's condition $(S_r)$, then Theorem \ref{Thm:Serre} gives $S_{r}\text{-}\!\depth(S/I)=\dim(S/I)=d$. Hence \cite[Theorem 1.2]{HSZ} and inequality (\ref{eq:S_rdepth-Skel}) imply that $S_{r}\text{-}\!\depth(S/\Sigma_{d-1})\ge d-1$. Since, as mentioned before, $\dim(S/\Sigma_{d-1})=d-1$, Proposition \ref{Prop:inequalities}(a) implies that equality holds, which again by Theorem \ref{Thm:Serre} means that $S/\Sigma_{d-1}$ satisfies Serre's condition $(S_r)$, as wanted.\hfill$\square$\smallskip
    	
    	Now, by induction on $d$ we can assume that
    	$$
    	S_r\text{-}\!\depth(S/\Sigma_{d-1})\ =\ \max\{i:\ S/\Sigma_i^{\bf g}(\Sigma_{d-1})\ \textup{satisfies Serre's condition}\ (S_r)\},
    	$$
    	and $S/\Sigma_{j}^{\bf g}(\Sigma_{d-1})$ satisfies Serre's condition $(S_r)$ for all $j\le S_r\text{-}\!\depth(S/\Sigma_{d-1})$.
    	
    	Noticing that
    	$\Sigma_j=\Sigma_j^{\bf g}(\Sigma_{d-1})$ for $j\leq d-1$, the claims (i)-(ii) and the above inductive hypothesis imply the assertions, completing the inductive proof.
    \end{proof}
    
    The argument of the proof also shows that
    \begin{Corollary}
    	Let $I\subset S$ be a monomial ideal. Then
    	$$
    	S_r\text{-}\!\depth(S/\Sigma_{0}^{\bf g}(I))\ \le\ \cdots\ \le\ S_r\text{-}\!\depth(S/\Sigma_{d-1}^{\bf g}(I))\ \le\ S_r\text{-}\!\depth(S/I),
    	$$
    	for all $r\ge1$ and all ${\bf g}\in\NN^n$ with ${\bf g}\ge{\bf deg}(I)$.
    \end{Corollary}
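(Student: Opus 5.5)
We must prove the chain $S_r\text{-}\!\depth(S/\Sigma_{0}^{\bf g}(I))\le\cdots\le S_r\text{-}\!\depth(S/\Sigma_{d-1}^{\bf g}(I))\le S_r\text{-}\!\depth(S/I)$. Since $\Sigma_j^{\bf g}(\Sigma_{i}^{\bf g}(I))=\Sigma_j^{\bf g}(I)$ for $j\le i$, and ${\bf g}\ge{\bf deg}(I)$ also implies ${\bf g}\ge{\bf deg}(\Sigma_i^{\bf g}(I))$, each consecutive inequality is an instance of a single claim applied to a different monomial ideal. The claim is: for any monomial ideal $J$ with $\dim(S/J)=e>0$ and ${\bf g}\ge{\bf deg}(J)$,
$$
S_r\text{-}\!\depth(S/\Sigma_{e-1}^{\bf g}(J))\ \le\ S_r\text{-}\!\depth(S/J).
$$
We also use that $\dim(S/\Sigma_i^{\bf g}(I))=i$, so the top skeleton of $\Sigma_i^{\bf g}(I)$ is $\Sigma_{i-1}^{\bf g}(I)$.

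To prove the claim, I would reuse the argument for part (i) in the proof of Theorem \ref{Thm:Skeletons}. Let $t=S_r\text{-}\!\depth(S/J)$.

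\textbf{Case $t<e$.} Claim (i) of that proof, applied to $J$, gives $S_r\text{-}\!\depth(S/\Sigma_{e-1}^{\bf g}(J))=t$, so the inequality holds with equality. Recall the mechanism. The module $\Sigma_{e-1}^{\bf g}(J)/J$ is Cohen-Macaulay of dimension $e$ by \cite[Theorem 1.2]{HSZ}. By Example \ref{Ex:(i)-(vi)}(ii), its $(r+1)$-th Serre depth is therefore $e$. Proposition \ref{prop:shortexact}(b), applied to
$$
0\rightarrow\Sigma_{e-1}^{\bf g}(J)/J\rightarrow S/J\rightarrow S/\Sigma_{e-1}^{\bf g}(J)\rightarrow0,
$$
then gives the equality when $t<e-1$. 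When $t=e-1$, inequality (\ref{eq:Sr-ses2}) gives $S_r\text{-}\!\depth(S/\Sigma_{e-1}^{\bf g}(J))\ge e-1$. The bound $S_r\text{-}\!\depth(S/\Sigma_{e-1}^{\bf g}(J))\le\dim(S/\Sigma_{e-1}^{\bf g}(J))=e-1$ from Proposition \ref{Prop:inequalities}(a) gives the reverse inequality.

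\textbf{Case $t=e$.} Proposition \ref{Prop:inequalities}(a) gives $S_r\text{-}\!\depth(S/\Sigma_{e-1}^{\bf g}(J))\le\dim(S/\Sigma_{e-1}^{\bf g}(J))=e-1<e=t$, so the inequality holds trivially.

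Chaining the claim for $J=\Sigma_i^{\bf g}(I)$, $i=d,d-1,\dots,1$, gives the corollary. The only step needing care is the reduction: checking that ${\bf g}$ remains admissible for the skeleton ideals, and that taking skeletons is compatible as stated. The second fact is already used in the proof of Theorem \ref{Thm:Skeletons}. The first holds because every generator of $\Sigma_i^{\bf g}(I)$ has exponents at most ${\bf g}$: a generator ${\bf x^a}$ with $\rho_{\bf g}({\bf a})>i$ may be taken with ${\bf a}\le{\bf g}$, since it can be replaced by ${\bf x}^{\min({\bf a},{\bf g})}$ when that still has $\rho_{\bf g}>i$. Degenerate cases, such as $\Sigma_0^{\bf g}(I)$ with $\dim 0$, are covered by the convention that the statements hold vacuously in dimension $0$.
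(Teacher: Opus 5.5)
Your proof is correct and is essentially the paper's argument: claim (i) from the proof of Theorem \ref{Thm:Skeletons}, plus the trivial bound $S_r\text{-}\!\depth(S/\Sigma_{d-1}^{\bf g}(I))\le d-1$ when $S_r\text{-}\!\depth(S/I)=d$, iterated using $\Sigma_j^{\bf g}(\Sigma_i^{\bf g}(I))=\Sigma_j^{\bf g}(I)$. Your explicit check that ${\bf g}\ge{\bf deg}(\Sigma_i^{\bf g}(I))$, which the paper's induction uses tacitly, is fine; you can drop the qualifier ``when that still has $\rho_{\bf g}>i$'', because $\min({\bf a},{\bf g})$ agrees with ${\bf g}$ wherever ${\bf a}$ does, so $\rho_{\bf g}$ never decreases.
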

    
    \section{Eventual behaviour of the Serre depth}\label{sec5}
    
    As in the previous section, let $S=K[x_1,\dots,x_n]$ be the standard graded polynomial ring over a field $K$. Our goal is to establish Theorem \ref{ThmF}.
    \begin{Theorem}\label{Thm:S_rDepth-Mon}
    	Let $I\subset S$ be a monomial ideal. Then
    	$$
    	S_r\text{-}\!\depth(S/I^{k+1})\ =\ S_r\text{-}\!\depth(S/I^k),
    	$$
    	for all $r\ge1$ and all $k\gg0$.
    \end{Theorem}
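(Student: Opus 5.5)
The plan is to reduce the statement to a combinatorial question about the multigraded pieces of $\H_\m^j(S/I^k)$ via Takayama's formula, encode that question in Presburger arithmetic, and then use semilinearity to get eventual constancy. Since $S/I^k$ is $\ZZ^n$-graded, so is $\D_S(\H_\m^j(S/I^k))$, and by (\ref{eq:gradedPieceMatlis}) $\D_S(\H_\m^j(S/I^k))_{\bf a}\ne0$ if and only if $\H_\m^j(S/I^k)_{-{\bf a}}\ne0$.

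Takayama's formula, in the form valid for all ${\bf a}\in\ZZ^n$ as in the work of Minh and Trung, gives
$$\H_\m^j(S/I^k)_{\bf a}\ \cong\ \widetilde{\H}_{j-|G_{\bf a}|-1}(\Delta_{\bf a}(I^k);K),$$
where $G_{\bf a}=\{i:\ {\bf a}(i)<0\}$ and $\Delta_{\bf a}(I^k)$ is the degree complex. Here $\Delta_{\bf a}(I^k)$ depends only on $G_{\bf a}$ and on the non-negative coordinates of ${\bf a}$. Hence a nonzero piece in degree ${\bf a}$ persists when the coordinates in $G_{\bf a}$ are sent to $-\infty$. The Matlis dual is a finitely generated $\ZZ^n$-graded $S$-module whose associated primes are monomial primes, so I expect the identity
$$\dim\D_S(\H_\m^j(S/I^k))\ =\ \max\{|G_{\bf a}|:\ \widetilde{\H}_{j-|G_{\bf a}|-1}(\Delta_{\bf a}(I^k);K)\ne0\}.$$
To prove it, I will check that the dimension of a finitely generated $\NN^n$-graded module equals the largest $|F|$ such that some component remains nonzero along all positive directions $e_i$, $i\in F$.

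Given this identity, $S_r\text{-}\!\depth(S/I^k)$ is determined by the finite set
$$\mathcal{T}_k\ =\ \{(\Delta,G):\ \Delta=\Delta_{\bf a}(I^k)\ \textup{and}\ G=G_{\bf a}\ \textup{for some}\ {\bf a}\in\ZZ^n\}.$$
This works because whether $(\Delta,G)\in\mathcal{T}_k$ contributes to the dimension bound for a given $j$ depends only on the homology of $\Delta$ over $K$, which is independent of $k$. So it suffices to show that $k\mapsto\mathcal{T}_k$ is eventually constant.

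For fixed $(\Delta,G)$, the condition $(\Delta,G)\in\mathcal{T}_k$ is a Presburger formula in $k$. Indeed, $F\in\Delta_{\bf a}(I^k)$ means that ${\bf x}^{\bf a}\notin I^kS_{x_{F\cup G}}$. This in turn says that there do not exist non-negative integers $c_u$, $u\in\mathcal{G}(I)$, with $\sum c_u=k$ and $\sum c_u\,{\bf deg}(u)(i)\le{\bf a}(i)$ for all $i\notin F\cup G$, which is a first-order statement in linear arithmetic over $\ZZ$. Quantifying existentially over ${\bf a}$ keeps the formula Presburger. By \cite{ES}, each set $T_{\Delta,G}=\{k:\ (\Delta,G)\in\mathcal{T}_k\}$ is therefore eventually periodic, and hence so is $k\mapsto S_r\text{-}\!\depth(S/I^k)$.

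The main obstacle is to upgrade eventual periodicity to eventual constancy. My first approach is to show that each $T_{\Delta,G}$ is eventually upward closed. The defining set of pairs $({\bf a},k)$ is semilinear, and the relevant conditions, namely membership of ${\bf a}$ restricted to the complement of $F\cup G$ in $k$ times the Newton-type region of $I$ localized, are homogeneous up to a bounded error. So if $({\bf a},k)$ is a witness, I will look for a witness for $k+1$ of the form $({\bf a}+{\bf deg}(u),k+1)$ for a suitable generator $u$, or obtained by rational scaling of ${\bf a}$ followed by rounding. I expect to verify this using the description of $I^kS_{x_F}$ through linear inequalities with constant term linear in $k$. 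If this works, then $T_{\Delta,G}$ is eventually empty or eventually all of $\NN$, and the theorem follows.
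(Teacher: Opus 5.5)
\textbf{There is a genuine gap: the step from eventual periodicity to eventual constancy, which is the actual content of the theorem, is not proved.}

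Everything up to eventual periodicity is sound. Your formula $\dim\D_S(\H^j_\m(S/I^k))=\max\{|G_{\bf a}|:\ \widetilde{\H}_{j-|G_{\bf a}|-1}(\Delta_{\bf a}(I^k);K)\neq0\}$ is correct; it follows from finite generation of the dual and persistence of nonzero pieces as the $G_{\bf a}$-coordinates go to $-\infty$. The Serre depths of $S/I^k$ are determined by $\mathcal{T}_k$, and each $T_{\Delta,G}$ is a Presburger subset of $\NN$, hence eventually periodic. For constancy, however, you only state an expectation, and neither proposed mechanism works as stated:
\begin{itemize}
\item \emph{Shifting a witness by a generator $u={\bf x}^{\bf u}$.} This must be done only outside $G$, since adding ${\bf u}$ to $G$-coordinates can change $G_{\bf a}$. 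Even then it gives only $\Delta_{{\bf a}'}(I^{k+1})\subseteq\Delta_{\bf a}(I^k)$, because ${\bf x}^{{\bf a}^+}\in I^kS_{x_{F\cup G}}$ forces ${\bf x}^{{\bf a}^++{\bf u}}\in I^{k+1}S_{x_{F\cup G}}$. The reverse inclusion needs $(I^{k+1}S_{x_{F\cup G}}:u)$ and $I^kS_{x_{F\cup G}}$ to agree at ${\bf x}^{{\bf a}^+}$ for all $F$ simultaneously. Nothing forces a single generator to achieve this.
\item \emph{Scaling $({\bf a},k)\mapsto(m{\bf a},mk)$.} This is exact only for integral closures, whose degree complexes depend only on $G_{\bf a}$ and ${\bf a}^+/k$. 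For ordinary powers it fails: $xy\notin(x^2,y^2)$ but $x^2y^2\in(x^2,y^2)^2$. It also reaches only multiples of $k$. Rounding $\tfrac{k+1}{k}{\bf a}$ can leave the polyhedral cell that fixes the complex. If that cell is lower dimensional, it meets $\tfrac1k\ZZ^n$ only for $k$ in a residue class, which is exactly how Presburger descriptions produce periodic, non-constant behaviour.
\end{itemize}
You are also aiming at more than you need. Complexes with zero homology in the relevant degree never affect the dimensions, and the others matter only through $\max|G|$. By Lemma~\ref{Lem:serre2} and flat base change to monomial localizations, it suffices that for every monomial ideal $J$ of a polynomial ring $T$ and every $l$, the nonvanishing of $\H^l_\m(T/J^k)$ is eventually constant in $k$. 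This refinement of Brodmann's stability of $\depth(T/J^k)$ is the missing ingredient.

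\textbf{Comparison with the paper.} The paper follows the same Takayama--Presburger route. At exactly this point it asserts that semilinearity of $\mathcal{B}(F)$ immediately makes the fibres $\mathcal{B}_k(F)$ all finite or all infinite for $k\gg0$. That is not a formal consequence of semilinearity. The set $\bigl((2,0)+\NN(2,0)+\NN(0,1)\bigr)\cup\bigl((1,0)+\NN(1,0)\bigr)\subset\ZZ^2$ is semilinear, with infinite fibres over even $k$ and finite fibres over odd $k$. So your diagnosis that only eventual periodicity comes for free is correct, and the paper's text does not supply the missing step either.

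Your $|G_{\bf a}|$-formula is also the right replacement for Lemma~\ref{Lem:dim-multi}, which counts $|\supp({\bf a})|$ and overestimates. For $S/(x_1^2)$ with $S=K[x_1,x_2]$, the infinitely many nonzero degrees $(1,t)$, $t\ge1$, have support $\{1,2\}$, yet the dimension is $1$.
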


    To prove this result, we need some preparation.\smallskip
    
    For a monomial $u\in S$, we denote by ${\bf deg}(u)$ the \textit{multidegree} of $u$. That is, the vector ${\bf deg}(u)\in\ZZ^n$ whose $i$-th entry is $({\bf deg}(u))(i)=\deg_{x_i}(u)$, for all $i\in[n]$.
    
    Let ${\bf e}_1,\dots,{\bf e}_n$ be the standard basis of $\ZZ^n$. That is ${\bf e}_i(j)=0$ for all $j\ne i$ and ${\bf e}_i(i)=1$ for all $i$. We regard $S$ as a $\ZZ^n$-graded ring, by setting ${\bf deg}(x_i)={\bf e}_i$ for all $i\in[n]$. Monomial ideals are naturally $\ZZ^n$-graded, also called \textit{multigraded}. Hence, the local cohomology modules $\H_\m^j(S/I)$ of a monomial ideal $I\subset S$, as well as their graded Matlis duals, are naturally multigraded.
    
    The following general lemma describes the Krull dimension of a multigraded finitely generated $S$-module. Given ${\bf a}\in\ZZ^n$, we put $\supp({\bf a})=\{i:\ {\bf a}(i)\ne0\}$.
    
    \begin{Lemma}\label{Lem:dim-multi}
    	Let $M$ be a finitely generated multigraded $S$-module. Then
    	$$
    	\dim_S(M)\ =\ \max_{F\subset[n]}\{|F|:\ M_{\bf a}\ne0\ \textit{for infinitely many}\ {\bf a}\in\ZZ^n\ \textit{with}\ \supp({\bf a})=F\}.
    	$$
    \end{Lemma}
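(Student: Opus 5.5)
We reduce to the case of a cyclic multigraded module. We then compute the dimension of $S/P$ for a monomial prime $P$ directly, and finish by an induction on a prime filtration. Both sides of the claimed formula behave well in short exact sequences of multigraded modules, provided we work at the level of the set of faces $F$ that occur.

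For a finitely generated multigraded $M$, set
$$
\Phi(M)=\{F\subset[n]:\ M_{\bf a}\ne0 \textit{ for infinitely many } {\bf a} \textit{ with } \supp({\bf a})=F\}.
$$
If $0\to U\to M\to N\to0$ is an exact sequence of multigraded modules with degree-preserving maps, then $\dim_K M_{\bf a}=\dim_K U_{\bf a}+\dim_K N_{\bf a}$ for every ${\bf a}$. Hence $M_{\bf a}\ne0$ exactly when $U_{\bf a}\ne0$ or $N_{\bf a}\ne0$. Splitting an infinite set of such ${\bf a}$ into the two parts gives $\Phi(M)=\Phi(U)\cup\Phi(N)$, so the right-hand side of the lemma is additive in the sense of taking maxima. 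The left-hand side satisfies $\dim M=\max\{\dim U,\dim N\}$ as well. It therefore suffices to prove the lemma for the factors of a filtration. Since $M$ is $\ZZ^n$-graded, every associated prime is a monomial prime $P_G=(x_i:\ i\notin G)$. The standard multigraded prime filtration then gives
$$
0=M_0\subset M_1\subset\cdots\subset M_t=M,\qquad M_i/M_{i-1}\cong (S/P_{G_i})(-{\bf b}_i)
$$
for suitable $G_i\subset[n]$ and ${\bf b}_i\in\ZZ^n$. Moreover, $\dim M=\max_i|G_i|$, because every minimal prime of $M$ occurs among the $P_{G_i}$ and every $P_{G_i}$ contains some minimal prime of $M$.

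It remains to show that, for $N=(S/P_G)(-{\bf b})\cong K[x_i:i\in G](-{\bf b})$, the maximum of $|F|$ over $F\in\Phi(N)$ equals $|G|$. We have $N_{\bf a}\ne0$ exactly when ${\bf a}-{\bf b}\in\NN^n$ and $({\bf a}-{\bf b})(i)=0$ for $i\notin G$. In other words, the nonzero degrees form the translate ${\bf b}+\NN^G$, where $\NN^G$ denotes vectors supported in $G$. Every vector ${\bf a}$ in this set satisfies ${\bf a}(i)={\bf b}(i)$ for $i\notin G$, so $\supp({\bf a})\subset G\cup\supp({\bf b})$. Only finitely many such ${\bf a}$ can have a given support $F$ unless the coordinates indexed by $G$ are free to grow.

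For the lower bound, take $F=G\cup\supp({\bf b}|_{[n]\setminus G})$. The vectors ${\bf a}={\bf b}+{\bf c}$ with ${\bf c}\in\NN^G$ and ${\bf c}(i)>|{\bf b}(i)|$ for $i\in G$ are infinitely many when $G\neq\emptyset$, and all of them have support exactly $F$. This gives $|F|\ge|G|$.

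This exposes the main obstacle. Coordinates outside $G$ where ${\bf b}$ is nonzero also count toward $|F|$, so for a shifted module the literal maximum can exceed $|G|$. I would first check whether the lemma is really meant up to shifts, or under the normalization that the generators of $M$ sit in degrees where this does not happen. Failing that, the fix I would try is to replace "support" by the set of coordinates that are unbounded along the infinite family. Concretely: if $M_{\bf a}\ne0$ for infinitely many ${\bf a}$ with $\supp({\bf a})=F$, then some coordinate is unbounded along that family, and only coordinates in some $G_i$ can be unbounded. This yields the inequality "$\le$" once the shift coordinates are controlled. For $M$ cyclic, or generated in degree ${\bf 0}$ (which is the case for the local cohomology duals used later, via Takayama's formula), all the ${\bf b}_i$ can be chosen with $\supp({\bf b}_i)\subset G_i$, and then both inequalities go through cleanly. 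I would resolve this normalization before writing out the final argument.
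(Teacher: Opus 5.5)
Your bookkeeping is correct. $\Phi(M)=\Phi(U)\cup\Phi(N)$, the set of degrees where $M$ is nonzero is $\bigcup_i(\mathbf{b}_i+\NN^{G_i})$, and $\dim M=\max_i|G_i|$. Your lower bound also gives $\dim M\le$ (right-hand side) whenever $\dim M\ge1$.

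\textbf{The gap.} The reverse inequality is never proved, and the repair you propose rests on a false claim: neither cyclicity nor generation in degree $\mathbf{0}$ lets you choose $\supp(\mathbf{b}_i)\subset G_i$. Take $n=2$ and $M=S/(x_1^2)$. It is cyclic, generated in degree $\mathbf{0}$, and $\dim M=1$. Yet $M_{(1,k)}=K\,x_1x_2^k\ne0$ for all $k\ge1$, and all these degrees have support $\{1,2\}$, so the right-hand side equals $2$. Any multigraded prime filtration of $M$ needs a factor $(S/P_{\{2\}})(-\mathbf{b}_i)$ with $\mathbf{b}_i(1)=1$ to cover these degrees.

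Your parenthetical about the local cohomology duals fails too. For $I=(x_1^2)$ one has $\D(\H^1_\m(S/I))\cong\Ext^1_S(S/I,S(-\mathbf{1}))\cong(S/I)(\mathbf{e}_1-\mathbf{e}_2)$. This module is generated in degree $(-1,1)$, is nonzero in every degree $(-1,k)$ with $k\ge1$ (support $\{1,2\}$), and has dimension $1$. Finally, a nonzero module of finite length has $\dim M=0$ but no qualifying $F$ at all, since only $\mathbf{a}=\mathbf{0}$ has empty support.

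\textbf{The statement itself is false.} So what you ran into is not a normalization issue: the lemma as stated is false, and no argument can close the gap. The paper's proof breaks exactly where you suspected. It works with $\dim M=\max\{|F|: M_{P_F}\ne0\}$ and homogeneous elements $m\in M_{\mathbf{a}}$ with $\mathbf{x}^{\mathbf{u}}m\ne0$ for all $\mathbf{u}$ supported in $F$. It then asserts that the degrees $\mathbf{a}+\mathbf{u}$ have support exactly $F$, ignoring the coordinates of $\mathbf{a}$ outside $F$. Its converse step (``multiplication by $s$ preserves non-zeroness on infinitely many homogeneous components of $M$, hence $sm\ne0$'') is unjustified. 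It fails for $m=x_1x_2^k\in S/(x_1^2)$ and $s=x_1$.

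\textbf{A correct version.} Your alternative of tracking unbounded coordinates instead of supports gives a true statement, and your filtration proves it in a few lines. The claim is that $\dim_S(M)$ is the largest $|G|$ such that for every $N$ there is $\mathbf{a}$ with $M_{\mathbf{a}}\ne0$ and $\mathbf{a}(i)\ge N$ for all $i\in G$.
\begin{itemize}
\item Each $G_i$ qualifies, using points of $\mathbf{b}_i+\NN^{G_i}$.
\item Conversely, suppose $\mathbf{a}_N$ witnesses the condition for $N=1,2,\dots$. Infinitely many of the $\mathbf{a}_N$ lie in a single $\mathbf{b}_i+\NN^{G_i}$, whose coordinates outside $G_i$ are constant. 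This forces $G\subset G_i$.
\end{itemize}
Here $G=\emptyset$ qualifies exactly when $M\ne0$, so the finite-length and zero cases come out correctly as well.
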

    \begin{proof}
    	For a subset $F\subset[n]$, let $P_{F}= (x_i : i \in[n]\setminus F)$ be a monomial prime ideal. Since $\dim(S/P_F)=|F|$ and $M$ is finitely generated and multigraded, we have
    	\begin{equation}\label{eq:dim-multidegree}
    	\dim_S(M)\ =\ \max\{|F|:\ M_{P_F}\ne0\}.
    	\end{equation}
    	
    	We characterize when $M_{P_F}\ne0$. By the criterion for vanishing in a localization,
    	$$
    	M_{P_F}\ne0
    	\quad\Longleftrightarrow\quad
    	\textup{there exists}\, m \in M \text{ such that } m/1\ne0 \text{ in } M_{P_F},
    	$$
    	which is equivalent to $sm\ne0$ for all $s\in S\setminus P_F$. Since $S \setminus P_F$ has a $K$-basis consisting exactly of the monomials whose support is contained in $F$, we may choose $m$ to be homogeneous, say $m \in M_{{\bf a}}$, and the above condition becomes
    	$$
    	{\bf x^u}m\ne0
    	\quad \text{for all}\ {\bf u}\in \NN^n
    	\ \text{with}\ \supp({\bf u})\subset F.
    	$$
    	
    	Assume first that $M_{P_F}\ne0$, and fix such a homogeneous element $m$. For any ${\bf u}$ with $\supp({\bf u})\subset F$ we have ${\bf x^u}m\ne0$, and multidegree ${\bf deg}({\bf x^u}m)={\bf a}+{\bf u}$. Choosing ${\bf u}$ so that ${\bf u}(i) > 0$ for all $i \in F$ and ${\bf u}(i)=0$ for $i \notin F$, and varying ${\bf u}$, we obtain infinitely many multidegrees $\mathbf b$ with $\supp(\mathbf b)=F$ and $M_{\mathbf b}\ne0$.
    	
    	Conversely, suppose that there exist infinitely many multidegrees ${\bf a}\in\ZZ^n$ with $\supp({\bf a})=F$ such that $M_{{\bf a}}\ne0$.
    	Choose a nonzero homogeneous element $m \in M_{{\bf a}}$. For any monomial $s \in S \setminus P_F$ (that is, any monomial whose support is contained $F$), multiplication by $s$ preserves non-zeroness on infinitely many homogeneous components of $M$, hence $s m\ne0$. Thus $m/1\ne0$ in $M_{P_F}$, and therefore $M_{P_F}\ne0$.
    	
    	Hence, we have shown that $M_{P_F}\ne0$ if and only if we have $M_{{\bf a}}\ne0$ for infinitely many ${\bf a}\in\ZZ^n$ with $\supp({\bf a})=F$. Combining this with the formula (\ref{eq:dim-multidegree}) yields the desired equality.
    \end{proof}

    We need a description of the Matlis duals of local cohomology modules of quotients by monomial ideals due to Takayama \cite{T}. To this end, we need more notation.
    
    Given ${\bf a}\in\ZZ^n$, we put $G_{\bf a}=\{i:\ {\bf a}(i)<0\}$ and $H_{\bf a}=\{i:\ {\bf a}(i)>0\}$. Moreover, we define ${\bf a}^+$ and ${\bf a}^-$ by setting
    $$
    {\bf a}^+(j)=\max\{{\bf a}(j),0\}\ \ \ \textup{and}\ \ \ {\bf a}^-(j)=\min\{{\bf a}(j),0\},\quad\textup{for all}\ j\in[n].
    $$
    
    Given a monomial ideal $I\subset S$ and a vector ${\bf a}\in\ZZ^n$, we define the following simplicial complex on $[n]$:
    $$
    \Delta_{\bf a}(I)\ =\ \{H\setminus G_{\bf a}\ :\ G_{\bf a}\subset H\subset[n]\ \text{and}\ {\bf x}^{{\bf a}^+}\notin I_H\}.
    $$
    
    Here $I_H$ is the \textit{monomial localization} of $I$ at $P_{H}=(x_i:i\in[n]\setminus H)$. That is, the monomial ideal obtained from $I$ by substituting the variables $x_i\notin P_{H}$ by $1$.

    By Takayama \cite[Theorem 1]{T} (see, also, Hochster \cite{Hoc}), we have
    $$
    \H_\m^j(S/I)_{\bf a}\ \cong\
    	\widetilde{\H}_{j-|G_{\bf a}|-1}(\Delta_{\bf a}(I);K),\quad\textup{for all}\ {\bf a}\in\ZZ^n.
    $$
    
    Here, ${\bf 1}=\sum_{i\in[n]}{\bf e}_i$, and $\widetilde{\H}_s(\Gamma;K)$ is the $s$-th reduced simplicial homology of a simplicial complex $\Gamma$ computed with respect to a field $K$.
     
    We remark that in \cite[Theorem 1]{T}, Takayama proves the above formula under some extra conditions for {\bf a}. However following the proof of \cite[Theorem 1]{T} one can observe that if any of those conditions on ${\bf a}$ is violated then both $\H_\m^j(S/I)_{\bf a}$ and $\widetilde{\H}_{j-|G_{\bf a}|-1}(\Delta_{\bf a}(I);K)$ are zero. Thus, those conditions are unnecessary.\smallskip
    
    As explained in formula (\ref{eq:gradedPieceMatlis}), we have
    $$
    \D_S(\H_\m^j(S/I))_{\bf a}\ =\ \Hom_K(\H_\m^j(S/I)_{-\bf a},K),\quad\textup{for all}\ {\bf a}\in\ZZ^n.
    $$
    
    Since $\H_\m^j(S/I)_{\bf a}$ is a finite dimensional $K$-vector space, and $G_{-{\bf a}}=H_{\bf a}$ for all ${\bf a}\in\ZZ^n$, we obtain that
    \begin{equation}\label{eq:gradedPieceMatlisMonomial}
    	\D_S(\H_\m^j(S/I))_{\bf a}\ \cong\ \widetilde{\H}_{j-|H_{\bf a}|-1}(\Delta_{-{\bf a}}(I);K),\quad\textup{for all}\ {\bf a}\in\ZZ^n.
    \end{equation}
    
    Finally, we recall a few facts about \textit{Presburger arithmetic}.
    
    Recall that a \textit{Presburger formula} is a boolean formula with variables in $\ZZ$ that can be written using quantifiers ($\exists$ and $\forall$), boolean operations (or, not, and), and integer affine-linear inequalities in the variables
    
    A variable in a Presburger formula is said to be \textit{free} if it is not quantified. A Presburger formula is written $F({\bf u})$ to indicate that ${\bf u}$ is the list of free variables.
    
    A subset $P\subset\ZZ^n$ is said to be \textit{Presburger definable}, or simply a \textit{Presburger set}, if it can be defined via a Presburger formula $F({\bf u})$, that is $P=\{{\bf u}\in\ZZ^n:\ F({\bf u})\}$.
    
    A subset $P\subset\ZZ^n$ is called \textit{linear} if
    $$
    P\ =\ {\bf a}+\NN{\bf b}_1+\dots+\NN{\bf b}_m\ =\ \{{\bf a}+k_1{\bf b}_1+\dots+k_m{\bf b}_m:\ k_1,\dots,k_m\in\NN\},
    $$
    for some ${\bf a},{\bf b}_1,\dots,{\bf b}_m\in\ZZ^n$. It is also possible that $P=\{{\bf a}\}$ is a singleton, in which case ${\bf b}_1=\dots={\bf b}_m={\bf 0}=(0,0,\dots,0)$.
    
    Whereas, $P\subset\ZZ^n$ is called \textit{semilinear} if $P$ is the union of finitely many linear sets.
    	
    The following classical result is due to Eilenberg and Sch\"utzenberger \cite{ES}. For our aims, we quote a version of this result formulated by D'Alessandro, Intrigila and Varricchio (see \cite[Theorem 1]{DIV}).
    
    \begin{Theorem}\label{Thm:Presburger}
    	Let $P\subset\ZZ^n$. Then $P$ is a Presburger set if and only if $P$ is semilinear.
    \end{Theorem}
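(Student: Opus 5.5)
The implication ``semilinear $\Rightarrow$ Presburger'' is direct. For the converse I would not try to prove closure of semilinear sets under complement head-on (the Ginsburg--Spanier route). Instead I would use quantifier elimination to reduce to sets cut out by linear inequalities and congruences, and then describe the lattice points of a polyhedron by a finite generating set.

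\textbf{Easy direction.} Let $P={\bf a}+\NN{\bf b}_1+\dots+\NN{\bf b}_m$ be linear. It is defined by the formula
$$\exists k_1\cdots\exists k_m\ \Big(\bigwedge_i k_i\ge0\ \wedge\ {\bf u}={\bf a}+\textstyle\sum_i k_i{\bf b}_i\Big).$$
Here the vector equality is a conjunction of $2n$ affine-linear inequalities in the variables ${\bf u}$ and $k_1,\dots,k_m$. A finite union of linear sets is then defined by the disjunction of these formulas, so every semilinear set is a Presburger set.

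\textbf{Reduction via quantifier elimination.} I would invoke Presburger's quantifier elimination theorem for the language enriched by the divisibility predicates $m\mid t$, for fixed $m\ge1$ and affine terms $t$. By this theorem every Presburger formula $F({\bf u})$ is equivalent to a quantifier-free formula built from atoms $\langle{\bf c},{\bf u}\rangle\ge e$ and $m\mid\langle{\bf c},{\bf u}\rangle-e$. I would then put this formula in disjunctive normal form, eliminating negations:
\begin{itemize}
\item $\neg(t\ge e)$ is equivalent to $-t\ge -e+1$;
\item $\neg(m\mid t)$ is equivalent to $\bigvee_{0<c<m} m\mid t-c$.
\end{itemize}
After this, $P$ is a finite union of sets of the form
$$\{{\bf u}\in\ZZ^n:\ A{\bf u}\ge{\bf b},\ C{\bf u}\equiv{\bf c}\ (\mathrm{mod}\ m)\}.$$
Since semilinear sets are by definition closed under finite unions, it suffices to show that each such set is semilinear.

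\textbf{Handling the congruences.} The solution set of $C{\bf u}\equiv{\bf c}\ (\mathrm{mod}\ m)$ is either empty or a coset ${\bf v}+L$ of a full-rank sublattice $L\subset\ZZ^n$, namely the kernel of $\ZZ^n\to(\ZZ/m)^{\ell}$. Choose a basis of $L$ and write ${\bf u}={\bf v}+B{\bf w}$ with ${\bf w}\in\ZZ^n$. This turns the set into the image, under an affine map, of the set of integer points of a rational polyhedron $Q=\{{\bf w}:\ A'{\bf w}\ge{\bf b}'\}$. Affine images of linear sets are linear, so it remains to show that $Q\cap\ZZ^n$ is semilinear.

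\textbf{Main obstacle: lattice points of a polyhedron.} This is the step I expect to be the main obstacle. I would use Minkowski--Weyl to write $Q=\mathrm{conv}(V)+C$, where $V$ is a finite set of rational points and $C=\{{\bf w}:\ A'{\bf w}\ge0\}$ is a rational polyhedral cone. By Gordan's lemma, $C\cap\ZZ^n$ is a finitely generated monoid, generated by the lattice points ${\bf h}_1,\dots,{\bf h}_s$ of the bounded set $\{\sum_i\lambda_i{\bf r}_i:\ 0\le\lambda_i\le1\}$, where ${\bf r}_i$ are integral ray generators of $C$. Then every ${\bf w}\in Q\cap\ZZ^n$ can be written as ${\bf w}={\bf p}+{\bf h}$ with ${\bf h}\in\sum_i\NN{\bf h}_i$ and ${\bf p}$ an integer point in the bounded set $\mathrm{conv}(V)+\{\sum_i\lambda_i{\bf r}_i:\ 0\le\lambda_i\le1\}$. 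To see this, split the cone part of ${\bf w}$ into its integer and fractional parts along the rays; since ${\bf w}$ and the integer part are both integral, the remainder ${\bf p}$ is a lattice point. That bounded set contains only finitely many lattice points ${\bf p}_1,\dots,{\bf p}_t$, each of which lies in $Q$, because $Q+C=Q$. Hence
$$Q\cap\ZZ^n\ =\ \bigcup_{j=1}^t\big({\bf p}_j+\NN{\bf h}_1+\dots+\NN{\bf h}_s\big),$$
which is semilinear. This completes the converse implication.

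The delicate points are the correctness of the integer/fractional decomposition and the quantifier elimination step. For the latter I would cite Presburger's original argument (or Cooper's algorithm) rather than reprove it.
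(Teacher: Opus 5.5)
Your proof is correct, and it takes a genuinely different route from the paper. The paper gives no argument of its own and simply quotes the classical theorem (Ginsburg--Spanier, via Eilenberg--Sch\"utzenberger and D'Alessandro--Intrigila--Varricchio). The standard proof behind that citation goes by induction on the formula. It rests on the closure of semilinear sets under intersection and complement; closure under union and projection is trivial. That argument is elementary but combinatorially laborious.

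You instead push all the logic into Presburger's quantifier elimination. What remains is a clean polyhedral statement: the lattice points of a rational polyhedron form a finite union of translates of the finitely generated monoid $C\cap\ZZ^n$ of its recession cone. You prove this correctly via Minkowski--Weyl and the floor/fractional-part trick from Gordan's lemma, and the reduction of congruences through a basis of the full-rank lattice $L$ is also fine.

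Your route buys three things:
\begin{itemize}
\item brevity, modulo standard results;
\item a proof that lives natively in $\ZZ^n$ --- the classical statement is usually formulated for $\NN^n$, while the paper applies it to sets such as $\mathcal{B}(F)\subset\ZZ\times\ZZ^n$ with negative coordinates;
\item extra structure, since all linear pieces coming from one polyhedron share the same periods.
\end{itemize}
The cost is that quantifier elimination is itself a substantial theorem that you cite rather than prove.

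A few minor points to tidy:
\begin{itemize}
\item The recession cone need not be pointed (e.g.\ $Q=\mathbb{R}^n$ when no inequalities occur). So ``ray generators'' should be read as an arbitrary finite integral generating set of $C$, which is all your argument uses.
\item An empty $Q$ or an unsolvable congruence system gives the empty set, which is semilinear as the empty union.
\item Several moduli can be merged into their least common multiple.
\end{itemize}
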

    
    It follows directly from this theorem that a finite union of Presburger sets is a Presburger set as well.
    
    Given a Presburger set $P\subset\ZZ^n$ and a non-empty subset $F=\{i_1<\dots<i_t\}\subset[n]$, let $\pi_F:\ZZ^n\rightarrow\ZZ^t$ be the map defined by setting $\pi_F({\bf a})=({\bf a}(i_1),\dots,{\bf a}(i_t))$ for all ${\bf a}\in\ZZ^n$. Let $\pi_F(P)=\{\pi_F({\bf a}):\ {\bf a}\in P\}$ be the \textit{projection} of $P$ with respect to $F$. It follows immediately from Theorem \ref{Thm:Presburger} that $\pi_F(P)$ is again a Presburger set, because any projection of a semilinear set is again semilinear.\medskip
    
    We are now ready to prove Theorem \ref{Thm:S_rDepth-Mon}.
    
    \begin{proof}[Proof of Theorem \ref{Thm:S_rDepth-Mon}]
    	We may assume that $I\ne0$, otherwise there is nothing to prove. Taking into account the definition of Serre depth, it is enough to show that for all $j\ge0$ we have
    	\begin{equation}\label{eq:dim-enough1}
    		\dim_{S}\D_{S}(\H_{\m}^j(S/I^{k+1}))\ =\ \dim_{S}\D_{S}(\H_{\m}^j(S/I^{k})),\quad\text{for all}\ k\gg0.
    	\end{equation}
    	
    	Fix $j\ge0$ and $k\ge1$. Given a non-empty subset $F\subset[n]$, we define the set
        $$
        \mathcal{B}_k(F)\ =\ \{{\bf a}\in\ZZ^n\ :\ \supp({\bf a})=F,\ \D_S(\H_\m^j(S/I^k))_{\bf a}\ne0\}.
        $$
        
        Applying formula (\ref{eq:gradedPieceMatlisMonomial}), we can rewrite it as
        \begin{equation}\label{eq:B_k(F)}
        \mathcal{B}_k(F)\ =\ \{{\bf a}\in\ZZ^n\ :\ \supp({\bf a})=F,\ \widetilde{\H}_{j-|H_{\bf a}|-1}(\Delta_{-{\bf a}}(I^k);K)\ne0\}.
        \end{equation}
        
        We claim that there exists an integer $k_F>0$ such that for $k\ge k_F$ the sets $\mathcal{B}_k(F)$ are all finite or all infinite. Once we acquire this claim, we define $k_0=\max_{F\subset[n]}k_F$. Then, for $k\ge k_0$ and any fixed subset $F\subset[n]$, the sets $\mathcal{B}_k(F)$ are all finite or all infinite. Applying Lemma \ref{Lem:dim-multi}, it follows that (\ref{eq:dim-enough1}) holds for all $k\ge k_0$.
        
        Let $F\subset[n]$. It remains to prove that for $k\gg0$, the sets $\mathcal{B}_k(F)$ are all finite or all infinite. To this end, we employ Presburger's arithmetic. We claim that the following is a Presburger set:
        \begin{equation}\label{eq:B(F)}
        \mathcal{B}(F)\ =\ \{(k,{\bf a})\in\ZZ\times\ZZ^{n}\ :\ k\ge1,\ {\bf a}\in\mathcal{B}_k(F)\}.
        \end{equation}
        
        There are only finitely many simplicial complexes on the vertex set $[n]$, and for such a simplicial complex $\Delta$ we have $\widetilde{\H}_i(\Delta;K)=0$ if $i<0$ or $i>n$. Hence
        $$
        \mathcal{C}\ =\ \{(i,\Delta)\ :\ \widetilde{\H}_i(\Delta;K)\ne0\}
        $$
        is a finite set. Combining formula (\ref{eq:B(F)}) with (\ref{eq:B_k(F)}), we see that $(k,{\bf a})\in\mathcal{B}(F)$ if and only if $k\ge1$ and $\widetilde{\H}_{j-|H_{{\bf a}}|-1}(\Delta_{-{\bf a}}(I^k);K)\ne0$. Equivalently, $(k,{\bf a})\in\mathcal{B}(F)$ if and only if $k\ge1$ and there exists a pair $(i,\Delta)\in\mathcal{C}$ such that
        $$
        \Delta_{-{\bf a}}(I^k)=\Delta\quad\text{and}\quad j-|H_{{\bf a}}|-1=i.
        $$
        
        Since $\mathcal{C}$ is finite, it follows that $\mathcal{B}(F)$ is a finite union of
        sets of the form
        $$
        \mathcal{B}_{(i,\Delta)}(F)\ =\ \{(k,{\bf a})\in\ZZ\times\ZZ^n:\ k\ge1,\ \supp({\bf a})=F,\ \Delta_{-{\bf a}}(I^k)=\Delta,\ j-|H_{{\bf a}}|-1=i\},
        $$
        where $(i,\Delta)\in\mathcal{C}$. Hence, it suffices to prove that each $\mathcal{B}_{(i,\Delta)}(F)$ is a Presburger set. The conditions $k\ge1$, $\supp({\bf a})=F$ and $j-|H_{\bf a}|-1=i$ are obviously Presburger. We show that $\Delta_{-{\bf a}}(I^k)=\Delta$ is a Presburger condition, too. Fix $V\subseteq[n]$. By definition,
        $$
        V\in \Delta_{-{\bf a}}(I^k)\ \ \Longleftrightarrow\ \ V\cap G_{-{\bf a}}=\emptyset\ \ \text{and}\ \ \mathbf x^{(-{\bf a})^+}\notin I_{V\cup G_{-{\bf a}}}^k.
        $$
        
        Let $\mathcal G(I)=\{u_1,\dots,u_m\}$ be the minimal monomial generating set of $I$,
        where $u_i=\mathbf x^{{\bf u}_i}$ for all $i\in[m]$.
        Since $I^k$ is generated by the monomials of the form
        $$
        \prod_{i\in[m]} u_i^{k_i}\ =\ {\bf x}^{\sum_{i\in[m]}k_i{\bf u}_i},
        \quad\text{with}\
        k_i\in\mathbb N\ \textup{such that}\  \sum_{i=1}^m k_i=k,
        $$
        the condition $\mathbf x^{(-{\bf a})^+}\notin I_{V\cup G_{-{\bf a}}}^k$ is equivalent to the following condition: for every $(k_1,\dots,k_m)\in\mathbb N^m$ with $\sum_{i=1}^m k_i=k$, there exists $j\in[n]\setminus(V\cup G_{-{\bf a}})$ such that
        $$
        (-{\bf a})^+(j)\ <\ \sum_{i=1}^m k_i\cdot\deg_{x_j}(u_i)\ =\ \sum_{i=1}^m k_i\,{\bf u}_i(j).
        $$
        
        Equivalently,
        \begin{equation}\label{eq:Pres}
        	\forall k_1,\dots,k_m\!\in\!\mathbb N
        	\Bigl(
        	\sum_{i=1}^m k_i=k\Rightarrow
        	\exists j\in[n]\setminus(V\cup G_{-{\bf a}}):
        	\sum_{i=1}^m k_i\,{\bf u}_i(j)\!-\!(-\!{\bf a})^+(j)\ge 1
        	\Bigr).
        \end{equation}
        
        This is a Presburger condition, as it involves only linear equalities and
        inequalities with integer coefficients, universal quantification over
        $\mathbb N^m$ subject to a linear constraint, and bounded existential
        quantification over the finite set $[n]\setminus(V\cup G_{-{\bf a}})$.
        
        Since the condition $V\cap G_{-{\bf a}}=\emptyset$ is a quantifier-free Presburger condition, membership of a fixed $V$ in $\Delta_{-{\bf a}}(I^k)$ is a Presburger condition.
        
        Now, both $\Delta_{-{\bf a}}(I^k)$ and $\Delta$ are subsets of the power set $\mathcal P([n])$ of $[n]$, which is finite. Therefore, the equality $\Delta_{-{\bf a}}(I^k)=\Delta$ is equivalent to a finite conjunction of Presburger conditions of the form (\ref{eq:Pres}) and their negations, depending on whether $V$ belongs or not to $\Delta$. Thus, $\Delta_{-{\bf a}}(I^k)=\Delta$ is a Presburger condition.
        
        Hence $\mathcal{B}_{(i,\Delta)}(F)$ is a Presburger set for all $(i,\Delta)\in\mathcal{C}$. Since $\mathcal{C}$ is a finite set, it follows that $\mathcal{B}(F)=\bigcup_{(i,\Delta)\in\mathcal{C}}\mathcal{B}_{(i,\Delta)}(F)$ is a Presburger set. Applying Theorem \ref{Thm:Presburger}, we see that $\mathcal{B}(F)$ is a semilinear set. From the definition of semilinear set, it follows immediately that for there exists $k_F>0$ such that the projections $\mathcal{B}_k(F)$, with $k\ge k_F$, are all finite or all infinite, as claimed. This concludes the proof.
    \end{proof}

    Theorem \ref{Thm:S_rDepth-Mon} has the following consequence.
    \begin{Corollary}
    	Let $I\subset S$ be a monomial ideal. Then $S_r\text{-}\!\depth(I^{k+1})=S_r\text{-}\!\depth(I^k)$, for all $r\ge1$ and all $k\gg0$.
    \end{Corollary}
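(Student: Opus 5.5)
The plan is to reduce the statement to Theorem \ref{Thm:S_rDepth-Mon} via Proposition \ref{Prop:Sr-I-R/I}. The ring $S=K[x_1,\dots,x_n]$ is Cohen-Macaulay. If $n\ge1$, then $\dim(S)>0$. We may assume $I\ne0$; if $I=0$ then $I^k=0$ for all $k$, and both sides equal $-\infty$.

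If $I=S$, then $I^k=S$ for all $k$, and the statement is trivial.

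So assume $I$ is a non-zero proper monomial ideal. Then every power $I^k$ is again a non-zero proper monomial ideal, hence homogeneous. Proposition \ref{Prop:Sr-I-R/I}, applied with $R=S$ and the ideal $I^k$, gives
$$
S_{r+1}\text{-}\!\depth(I^k)\ =\ S_r\text{-}\!\depth(S/I^k)+1,\quad\text{for all}\ r\ge1,\ k\ge1.
$$

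Theorem \ref{Thm:S_rDepth-Mon} shows that $S_r\text{-}\!\depth(S/I^k)$ is eventually constant in $k$ for each $r\ge1$. Hence $S_{r}\text{-}\!\depth(I^k)$ is eventually constant for every $r\ge2$.

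It remains to handle $r=1$, which is the only point needing care, since Proposition \ref{Prop:Sr-I-R/I} does not reach it. The first idea would be to use Proposition \ref{Prop:inequalities}(a) together with a bound from above, but a cleaner route is to argue directly.

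From the long exact sequence attached to $0\to I^k\to S\to S/I^k\to0$, and the vanishing $\H_\m^j(S)=0$ for $j<n$, we get
$$
\H_\m^{j}(I^k)\cong\H_\m^{j-1}(S/I^k)\quad\text{for } j\le n-1.
$$
Moreover, $\H_\m^0(I^k)=0$, since $I^k\subset S$ is torsion-free. Hence $\dim\D(\H_\m^j(I^k))$ is eventually constant in $k$ for all $j\le n-1$; this is exactly equation (\ref{eq:dim-enough1}) established in the proof of Theorem \ref{Thm:S_rDepth-Mon}.

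For $j=n$ we use a direct computation. The ideal $I^k$ is non-zero and torsion-free of rank one, so $\D(\H_\m^n(I^k))\cong\Hom_S(I^k,S)$ up to shift, by graded local duality. This is non-zero of dimension $n$ for every $k$.

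Consequently, all the numbers $\dim\D(\H_\m^j(I^k))$ for $0\le j\le n$ stabilize as $k$ grows. By the definition of Serre depth, $S_r\text{-}\!\depth(I^k)$ then stabilizes for every $r\ge1$, including $r=1$, which completes the argument.
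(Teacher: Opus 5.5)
Your argument is correct and is essentially the paper's. The paper also uses the long exact sequence of $0\to I^k\to S\to S/I^k\to0$ to reduce $\dim\D(\H_\m^j(I^k))$ for $j\le n-1$ to the stabilization obtained in the proof of Theorem~\ref{Thm:S_rDepth-Mon}. The only difference is at $j=n$: you get dimension $n$ from local duality, since $\Hom_S(I^k,S)$ is torsion-free of rank one, whereas the paper uses $0\to\H_\m^{n-1}(S/I^k)\to\H_\m^n(I^k)\to\H_\m^n(S)\to0$. Your preliminary reduction via Proposition~\ref{Prop:Sr-I-R/I} for $r\ge2$ is harmless but redundant, because the direct computation already handles every $r\ge1$.
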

    \begin{proof}
    	We may assume that $I\ne0$. It suffices to show that for all $j\ge0$
    	\begin{equation}\label{eq:dim-enough2}
    		\dim_{S}\D_{S}(\H_{\m}^j(I^{k+1}))\ =\ \dim_{S}\D_{S}(\H_{\m}^j(I^{k})),\quad\text{for all}\ k\gg0.
    	\end{equation}
    	
    	By equation (\ref{eq:iso-R/I-I}), which is valid in our setting because $S$ is Cohen-Macaulay of positive dimension, for all $k\ge1$ we have $\H_\m^0(I^k)=0$, $\H_\m^j(S/I^k)\cong\H_\m^{j+1}(I)$ for all $j=0,\dots,n-2$, and the short exact sequence
    	$$
    	0\rightarrow\H_\m^{n-1}(S/I^k)\rightarrow\H_\m^{n}(I^k)\rightarrow\H_\m^n(S)\rightarrow0.
    	$$
    	Here, we used that $\dim(S/I^k)\le n-1$ and so $\H_\m^n(S/I^k)=0$ (see \cite[Theorem 6.1.2]{BS}). Applying the graded Matlis dual, we obtain the short exact sequence
    	$$
    	0\rightarrow\D_S(\H_\m^n(S))\rightarrow\D_S(\H_\m^{n}(I^k))\rightarrow\D_S(\H_\m^{n-1}(S/I^k))\rightarrow0.
    	$$
    	
    	Altogether, we have
    	$$
    	\dim_S\D_S(\H_\m^j(I^k))=\begin{cases}
    		-\infty&j=0,\\
    		\dim_S\D_S(\H_\m^{j-1}(S/I^k))&0<j<n,\\
    		\max\{\dim_S\D_S(\H_\m^n(S)),\dim_S\D_S(\H_\m^{n-1}(S/I^k))\}&j=n.
    	\end{cases}
    	$$
    	By the proof of Theorem \ref{Thm:S_rDepth-Mon} we have $\dim_S\D_S(\H_\m^i(S/I^{k+1}))=\dim_S\D_S(\H_\m^i(S/I^k))$ for all $i\ge0$ and all $k\gg0$. This fact and the above formula for $\dim_S\D_S(\H_\m^j(I^k))$ imply that equation (\ref{eq:dim-enough2}) indeed holds. This concludes the proof.
    \end{proof}
    
    As explained in the introduction, to prove Conjecture \ref{ConjG} in general, it would be enough to establish equation (\ref{eq:dim_R-const}). If $R$ is complete, this would follow if equation (\ref{eq:ass_R-const}) was true in general. Unfortunately, this is not the case.
    
    \begin{Example}\label{Ex:notAss}
    	\rm Let $S=K[x,y,s,t,u,v]$ be the polynomial ring over a field $K$, $\m=(x,y,s,t,u,v)$, and let $f=sx^2v^2-(s+t)xyuv+ty^2v^2$. Put $R=(S/(f))_\m$. Then $R$ is a Gorenstein local ring with $\dim(R)=5$. Let $I=(u,v)R\subset R$. Katzman (\cite[Corollary 1.3]{Kat}) has proved that $\Ass_R(\H_{I}^2(R))$ is infinite. As a consequence of this fact, in \cite[Theorem 3.2(i)]{VHHK} it is noted that $\Ass_R(\Ext^2_R(R/I^k,R))$ never stabilizes. Since $\omega_R\cong R$, Grothendieck's local duality (\cite[Theorem 3.5.8]{BH}) together with partial Matlis duality (see \cite[Theorem 10.2.19(ii)]{BS}) yield that
    	$$
    	\D_R(\H_\m^3(R/I^k))\ \cong\ \Ext^{\dim(R)-3}_R(R/I^k,\omega_R)\otimes_R\widehat{R}\ \cong\ \Ext^{2}_R(R/I^k,R)\otimes_R\widehat{R},
    	$$
    	for all $k\ge1$. This equation and \cite[Theorem 23.2(ii)]{Mat} imply that
    	$$
    	\Ass_{R}(\D_R(\H_\m^3(R/I^k)))\ =\ \Ass_R(\Ext^{2}_R(R/I^k,R)).
    	$$
    	
    	Hence equation (\ref{eq:ass_R-const}) does not hold in this case. Nonetheless, using Macaulay2 \cite{GDS}, we could see that $\dim_R\D_R(\H_\m^3(R/I^k))=\dim_R\Ext^2_R(R/I^k,R)=2$ for all $2\le k\le 20$. Hence, this lead us to expect that equation (\ref{eq:dim_R-const}) holds anyway in this case.
    \end{Example}

    The following observation is needed for the sequel.
    \begin{Proposition}\label{Prop:depth0}
    	Let $(R,\m)$ be a Noetherian local ring or a standard graded $K$-algebra. Let $M$ be a finitely generated $R$-module, which we assume is homogeneous if $R$ is a $K$-algebra. Then $S_r\text{-}\!\depth(M)=0$ if and only if $\depth(M)=0$. 
    \end{Proposition}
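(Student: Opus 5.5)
The plan is to prove both directions directly from Definition \ref{Def:SerreDepth}, using the characterization $\depth(M)=\inf\{j:\ \H_\m^j(M)\ne0\}$ from \cite[Theorem 3.5.7]{BH} and the fact that Matlis duality detects nonvanishing. Throughout we take $M\ne0$, since otherwise both invariants are $-\infty$ by convention.

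Suppose first that $\depth(M)=0$. Then $\H_\m^0(M)\ne0$, hence $\D(\H_\m^0(M))\ne0$. In the local case this holds because $\E_R(K)$ is an injective cogenerator; in the graded case, $\Hom_K(-,K)$ is faithful on nonzero vector spaces. So $\dim\D(\H_\m^0(M))\ge0\ge 0-r+1$ for every $r\ge1$. The index $j=0$ therefore satisfies the defining condition, and since the Serre depth is a minimum over $j\ge0$, we get $S_r\text{-}\!\depth(M)=0$.

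Conversely, suppose $S_r\text{-}\!\depth(M)=0$. Then $j=0$ satisfies the defining condition, so $\dim\D(\H_\m^0(M))\ge 1-r$. Since $1-r\le0$, this inequality by itself gives no information when $r\ge1$. What it does give is that $\dim\D(\H_\m^0(M))\ne-\infty$. By the convention that $\dim(0)=-\infty$, this means $\D(\H_\m^0(M))\ne0$, so $\H_\m^0(M)\ne0$ and hence $\depth(M)=0$. The same convention applies even when $r=1$, where the condition reads $\dim\ge0$, which again forces nonvanishing.

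The only delicate point is the convention $\dim(0)=-\infty$, which is what makes the condition at $j=0$ equivalent to $\H_\m^0(M)\ne0$ for every $r$. Alternatively, one could shorten the argument using the chain in Proposition \ref{Prop:inequalities}: the $S_r$-depths are nonnegative and decrease in $r$ down to $\depth(M)$. This route is not needed, and the direct argument above avoids finiteness issues entirely.
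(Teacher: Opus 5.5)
Your proof is correct and is essentially the paper's. The implication $\depth(M)=0\Rightarrow S_r\text{-}\!\depth(M)=0$ is argued exactly as in the paper. For the converse, the paper cites Proposition~\ref{Prop:inequalities}, whereas you read $\H_\m^0(M)\ne0$ directly off the defining condition at $j=0$. That is just the elementary inequality $S_r\text{-}\!\depth(M)\ge\depth(M)$ made explicit, and it avoids going through part (b), whose local-case proof uses completion and local duality.

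One harmless slip: for $M=0$ the standard convention is $\depth(0)=\infty$ (an empty infimum), not $-\infty$. The equivalence is still trivially true there, since neither side equals $0$.
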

    \begin{proof}
    	If $S_r\text{-}\!\depth(M)=0$, then Proposition \ref{Prop:inequalities} implies that $\depth(M)=0$.
    	
    	Conversely, let $\depth(M)=0$. Then \cite[Theorem 3.5.7]{BH} implies that $\H_\m^0(M)\ne0$. Hence $\dim\D(\H_\m^0(M))\ge0-r+1$ an so $S_r\text{-}\!\depth(M)=0$ for all $r\ge1$.
    \end{proof}

    On the other hand, in support of Conjecture \ref{ConjG}, we can prove the following
    \begin{Theorem}\label{Thm:dim<=3}
    	Let $(R,\m)$ be a Noetherian local ring or a standard graded $K$-algebra. Assume that $R$ is a Cohen-Macaulay domain having a $($graded, if $R$ is a $K$-algebra$)$ canonical module $\omega_R$. Let $I\subset R$ be an ideal, which we assume is homogeneous if $R$ is a $K$-algebra. Suppose that $\dim(R)\le3$. Then
    	$$
    	S_r\text{-}\!\depth(R/I^{k+1})\ =\ S_r\text{-}\!\depth(R/I^k),
    	$$
    	for all $r\ge1$ and all $k\gg0$.
    \end{Theorem}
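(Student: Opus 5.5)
We argue by a case analysis on $d=\dim(R/I^k)$, which is small and independent of $k$, and reduce everything to two eventually stable invariants: $\depth(R/I^k)$ and $\Ass_R(R/I^k)$.

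\textbf{Trivial cases and dimension bound.} If $I=0$ or $I=R$ there is nothing to prove, so let $I$ be a non-zero proper ideal. Since $R$ is a domain, $I^k\ne0$ for all $k$, and every prime containing $I^k$ contains a non-zero element. Hence every such prime has height $\ge1$, and because $R$ is Cohen--Macaulay (so $\dim(R/P)+\operatorname{height}P=\dim(R)$) we get $\dim(R/P)\le\dim(R)-1$. Moreover $\sqrt{I^k}=\sqrt I$, so
$$
d=\dim(R/I^k)=\dim(R/I)\le\dim(R)-1\le2
$$
is independent of $k$. By Proposition \ref{Prop:inequalities}(a), all Serre depths $S_r\text{-}\!\depth(R/I^k)$ lie in $\{0,\dots,d\}$.

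\textbf{Stable invariants.} By Brodmann's theorem, both $\depth(R/I^k)$ and $\Ass_R(R/I^k)$ are constant for $k\gg0$. Set $t=\depth(R/I^k)$ for $k\gg0$.

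\textbf{Case $d=0$.} Then $S_r\text{-}\!\depth(R/I^k)=0$ for all $k$ and all $r$.

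\textbf{Case $d=1$.} Proposition \ref{Prop:inequalities}(b) applies for every $r\ge1=d$ and gives $S_r\text{-}\!\depth(R/I^k)=\depth(R/I^k)$. This is eventually constant.

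\textbf{Case $d=2$.} For $r\ge2=d$, Proposition \ref{Prop:inequalities}(b) again gives $S_r\text{-}\!\depth(R/I^k)=\depth(R/I^k)$, which is eventually constant. It remains to treat $r=1$.
\begin{itemize}
\item If $t=0$, Proposition \ref{Prop:depth0} gives $S_1\text{-}\!\depth(R/I^k)=0$ for $k\gg0$.
\item If $t=2$, then $2\ge S_1\text{-}\!\depth\ge S_2\text{-}\!\depth=\depth=2$, so $S_1\text{-}\!\depth(R/I^k)=2$ eventually.
\item If $t=1$, then $S_1\text{-}\!\depth(R/I^k)\in\{1,2\}$, because it is at least $S_2\text{-}\!\depth=\depth=1$. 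We must show that the property $S_1\text{-}\!\depth(R/I^k)=2$ is eventually either always true or always false.
\end{itemize}

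\textbf{The subcase $d=2$, $r=1$, $t=1$.} A Cohen--Macaulay ring with a canonical module is a homomorphic image of a Gorenstein ring (\cite[Theorem 3.3.6]{BH}, and its graded analogue). Therefore Theorem \ref{Thm:Serre} applies to $M=R/I^k$ with $r=1$. It says that $S_1\text{-}\!\depth(R/I^k)=\dim(R/I^k)$ if and only if $R/I^k$ is equidimensional and satisfies $(S_1)$. This holds if and only if $R/I^k$ is unmixed, i.e. every $P\in\Ass_R(R/I^k)$ satisfies $\dim(R/P)=2$. This condition depends only on the set $\Ass_R(R/I^k)$ (note $\dim(R/I^k)=2$ is fixed), and that set is eventually constant by Brodmann. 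Hence $S_1\text{-}\!\depth(R/I^k)$ is eventually constant.

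\textbf{Main obstacle.} The only non-formal point is the subcase $d=2$, $r=1$, $t=1$. There the Serre depth is not determined by the depth, and we need the equivalence (a)$\Leftrightarrow$(b) of Theorem \ref{Thm:Serre}, which requires $R$ to be a homomorphic image of a Gorenstein ring. This is exactly where the canonical-module hypothesis is used. The hypothesis $\dim(R)\le3$ is used only to force $d\le2$, so that a single value of $r$ escapes Proposition \ref{Prop:inequalities}(b) and the Serre depth is squeezed between two adjacent values.
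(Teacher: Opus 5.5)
Your proof is correct, but in the one nontrivial case it takes a different route from the paper's proof.

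The reductions are the same in both arguments. For $r\ge2$ one uses Proposition \ref{Prop:inequalities}(b) and Brodmann; Proposition \ref{Prop:depth0} handles depth eventually $0$; and a squeeze handles depth eventually $2$.

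When $\depth(R/I^k)=1$, the paper uses local duality to write $S_1\text{-}\!\depth(R/I^k)=\min\{j\ge1:\ \dim\Ext^{\dim R-j}_R(R/I^k,\omega_R)\ge j\}$. It then argues that $\dim\Ext^i_R(R/I^k,\omega_R)$ is eventually constant for $i=0,1,2$. This uses the stabilization of $(0:_{\omega_R}I^k)$, the Khashyarmanesh--Salarian theorem for $\Ext^1$, and an exact-sequence monotonicity argument for $\Ext^2$.

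You instead observe that this case is exactly $\depth=\dim-1$, so only the alternative $S_1\text{-}\!\depth(R/I^k)\in\{1,2\}$ must be decided. Theorem \ref{Thm:Serre} reduces this to unmixedness of $R/I^k$, which is read off from the eventually constant set $\Ass_R(R/I^k)$. This is precisely the argument of Proposition \ref{Prop:S_1-depth-dim}, so in effect you derive the theorem from that proposition.

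Your route has several advantages.
\begin{enumerate}
\item It needs only Brodmann's two stability theorems.
\item It shows that the domain hypothesis serves only to force $\dim(R/I)\le2$. For this, $\operatorname{height}P+\dim(R/P)\le\dim R$ suffices, so Cohen--Macaulayness is not needed.
\item After passing to $\widehat R$ and $I\widehat R$ with Theorem \ref{Thm:completion}, it gives the conclusion for any local ring (or standard graded $K$-algebra) with $\dim(R/I)\le2$, with no canonical module required.
\item It is more robust. In the exact sequence induced by $0\to I^k/I^{k+1}\to R/I^{k+1}\to R/I^k\to0$, the term after $\Ext^2_R(R/I^{k+1},\omega_R)$ is $\Ext^2_R(I^k/I^{k+1},\omega_R)$, not $\Ext^3_R(I^k/I^{k+1},\omega_R)$. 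Its dimension is only bounded by $1$, so the monotonicity of $\dim\Ext^2_R(R/I^k,\omega_R)$ claimed in the paper is not justified as written. Your unmixedness argument settles exactly the point at stake, namely whether $\dim\Ext^2_R(R/I^k,\omega_R)\ge1$ when $\dim R=3$.
\end{enumerate}

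What the paper's approach aims for, and yours does not provide, is stability of the dimensions of the individual duals $\D(\H_\m^j(R/I^k))$. That is information in the direction of (\ref{eq:dim_R-const}), rather than only about the Serre depths.
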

    \begin{proof}
    	We assume that $I\ne0$, otherwise there is nothing to prove. By assumption $\dim R=d\le3$. Since $R$ is a domain, $\dim(R/I^k)\le 2$ for all $k$. By Brodmann \cite{B79}, $\dim(R/I^{k+1})=\dim(R/I^k)$ is constant for $k\gg0$. By Proposition \ref{Prop:inequalities}, $S_r\text{-}\!\depth(R/I^k)=\depth(R/I^k)$ for all $k\gg0$ and all $r\ge2$. Again by Brodmann \cite{B79a}, it follows that $S_r\text{-}\!\depth(R/I^k)$ is constant for all $k\gg0$ and all $r\ge2$. Therefore, it remains to prove that $S_1\text{-}\!\depth(R/I^k)$ is constant for all $k\gg0$.
    	
    	If $\depth(R/I^k)=0$ for all $k\gg0$, Proposition \ref{Prop:depth0} implies that $S_1\text{-}\!\depth(R/I^k)=0$ for all $k\ge1$. So we may assume that $\depth(R/I^k)\ge1$ for all $k\gg0$.  Since $\dim(R/I^k)\le 2$ for all $k\ge1$, Proposition \ref{Prop:inequalities}(a) gives either $\depth(R/I^k)=1$ for all $k\gg0$ or $\depth(R/I^k)=2$ for all $k\gg0$. If $\depth(R/I^k)=2$ for all $k\gg0$, then $S_1\text{-}\!\depth(R/I^k)=2$ for all $k\gg0$ as well. So, we may therefore assume there exists $k_0>0$ such that $\depth(R/I^k)=1$ for all $k\ge k_0$. Hence $\H_\m^0(R/I^k)=0$ and $\H_\m^1(R/I^k)\ne0$ for all $k\ge k_0$. Taking this into account and applying Grothendieck's local duality, it follows that
    	\begin{equation}\label{eq:S1-dim<=3}
    		S_1\text{-}\!\depth(R/I^k)\ =\ \min\{j\ge 1:\  \dim\Ext^{d-j}_R(R/I^k,\omega_R)\ge j\},
    	\end{equation}
    	for all $k\ge k_0$. Note that $\Ext^0_R(R/I^k,\omega_R)=\Hom_R(R/I^k,\omega_R)\cong(0:_{\omega_R}I^k)$ for all $k\ge1$. Since $R$ is Noetherian and $\omega_R$ is finitely generated, the ascending chain
    	$$
    	(0:_{\omega_R}I)\ \subset\ (0:_{\omega_R}I^2)\ \subset\ (0:_{\omega_R}I^3)\ \subset\ \cdots
    	$$
    	stabilizes. Hence
    	$$
    	\Ass_R(\Ext^0_R(R/I^{k+1},\omega_R))\ =\ \Ass_R(\Ext^0_R(R/I^k,\omega_R)),
    	$$
    	for all $k\gg0$. Moreover, by \cite[Corollary 2.3]{KS} is known that for a finitely generated $R$-module $M$ the sets $\Ass_R(\Ext^{1}_R(R/I^k,M))$ are constant for all $k\gg0$. Applying the previous discussion and this result with $M=\omega_R$, we deduce that
    	\begin{equation}\label{eq:dim-k-k+1}
    		\dim\Ext^{i}_R(R/I^{k+1},\omega_R)\ =\ \dim\Ext^{i}_R(R/I^k,\omega_R),\ \ \text{for}\ i=0,1\ \ \text{and all}\ k\gg0. 
    	\end{equation}
    	
    	We claim that (\ref{eq:dim-k-k+1}) holds also for $i=2$ and all $k\gg0$. Let $k\ge k_0$. The short exact sequence $0\rightarrow I^{k}/I^{k+1}\rightarrow R/I^{k+1}\rightarrow R/I^k\rightarrow0$ induces the exact sequence
    	$$
    	\Ext^2_R(R/I^k,\omega_R)\xrightarrow{\varphi_k}\Ext^2_R(R/I^{k+1},\omega_R)\xrightarrow{\psi_k}\Ext^3_R(I^{k}/I^{k+1},\omega_R).
    	$$
    	By \cite[Corollary 3.5.11]{BH}, $\dim\Ext^3_R(I^{k}/I^{k+1},\omega_R)\le0$. Hence, this dimension is $0$ if the module is non-zero; otherwise it is $-\infty$. We have
    	$$
    	\CoKer(\varphi_k)\ \cong\ \frac{\Ext^2_R(R/I^{k+1},\omega_R)}{\Ker(\psi_k)}\ \cong\ \Im(\psi_k)\ \subset\ \Ext^3_R(I^{k}/I^{k+1},\omega_R).
    	$$
    	It follows that $\dim\CoKer(\varphi_k)\le0$. The short exact sequence
    	$$
    	0\rightarrow\Im(\varphi_k)\rightarrow \Ext^2_R(R/I^{k+1},\omega_R)\rightarrow\CoKer(\varphi_k)\rightarrow0
    	$$
    	implies that
    	$$
    	\dim\Ext^2_R(R/I^{k+1},\omega_R)\ =\ \max\{\dim\Im(\varphi_k),\,\dim\CoKer(\varphi_k)\}.
    	$$
    	On the other hand, we have $\Im(\varphi_k)\cong\Ext^{2}_R(R/I^k,\omega_R)/\Ker(\varphi_k)$. Since $k\ge k_0$, we have $\H_\m^1(R/I^k)\ne0$ and $\Ext^2_R(R/I^k,\omega_R)\ne0$. So, $\dim\Im(\varphi_k)\le\dim\Ext^{2}_R(R/I^k,\omega_R)$ and $\dim\CoKer(\varphi_k)\le0\le\dim\Ext^2_R(R/I^k,\omega_R)$. These facts and the previous inequality imply that
    	$$
    	\dim\Ext^{2}_R(R/I^{k+1},\omega_R)\ \le\ \dim\Ext^{2}_R(R/I^k,\omega_R),\ \ \text{for all}\ k\ge k_0.
    	$$
    	From this it follows immediately that
    	\begin{equation}\label{eq:dim-k-k+2}
    		\dim\Ext^{2}_R(R/I^{k+1},\omega_R)\ =\ \dim\Ext^{2}_R(R/I^k,\omega_R),\ \ \text{for all}\ k\gg0. 
    	\end{equation}
    	Combining equations (\ref{eq:S1-dim<=3}), (\ref{eq:dim-k-k+1}) and (\ref{eq:dim-k-k+2}), the assertion follows.
    \end{proof}

    We can also prove the following result supporting Conjecture \ref{ConjG}.
    \begin{Proposition}\label{Prop:S_1-depth-dim}
    	Let $(R,\m)$ be a Noetherian local ring or a standard graded $K$-algebra. Assume that $R$ is a homomorphic image of a Gorenstein ring. Let $I\subset R$ be an ideal, which we assume is homogeneous if $R$ is a $K$-algebra. Suppose that $\depth(R/I^k)=\dim(R/I^k)-1$ for all $k\gg0$. Then
    	$$
    	S_1\text{-}\!\depth(R/I^{k+1})\ =\ S_1\text{-}\!\depth(R/I^k),
    	$$
    	for all $k\gg0$.
    \end{Proposition}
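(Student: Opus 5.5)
The plan is to reduce the statement to Brodmann's stabilization of associated primes, using Theorem \ref{Thm:Serre} for $r=1$.

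\textbf{Step 1: $S_1$-depth takes only two values.} Since $I^k$ and $I$ have the same radical, $\Supp(R/I^k)=V(I)$ for all $k\ge1$. Hence $d=\dim(R/I^k)=\dim(R/I)$ does not depend on $k$. By hypothesis there is $k_0$ with $\depth(R/I^k)=d-1$ for all $k\ge k_0$, so $d\ge1$.

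Since $\H_\m^j(R/I^k)=0$ for $j<d-1$ (\cite[Theorem 3.5.7]{BH}), the definition of Serre depth gives $S_1\text{-}\!\depth(R/I^k)\ge d-1$. Proposition \ref{Prop:inequalities}(a) gives $S_1\text{-}\!\depth(R/I^k)\le d$. So for $k\ge k_0$ the value $S_1\text{-}\!\depth(R/I^k)$ is either $d-1$ or $d$. It remains to show that the event $S_1\text{-}\!\depth(R/I^k)=d$ is eventually either always true or always false.

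\textbf{Step 2: translate into unmixedness.} Since $R$ is a homomorphic image of a Gorenstein ring, Theorem \ref{Thm:Serre} with $r=1$ applies. It says $S_1\text{-}\!\depth(R/I^k)=\dim(R/I^k)$ if and only if $R/I^k$ is equidimensional and satisfies $(S_1)$.

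As noted before Theorem \ref{Thm:Serre}, a module is equidimensional and $(S_1)$ exactly when it is unmixed. Indeed, $(S_1)$ means there are no embedded primes, so equidimensional plus $(S_1)$ gives unmixed. Conversely, an unmixed module has all associated primes minimal and of the same dimension. Therefore, for $k\ge k_0$,
$$
S_1\text{-}\!\depth(R/I^k)=d\iff \dim(R/P)=d\ \text{for all}\ P\in\Ass_R(R/I^k).
$$

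\textbf{Step 3: stabilization.} By Brodmann's theorem on asymptotic stability of associated primes \cite{B79}, the set $\Ass_R(R/I^k)$ is constant for $k\gg0$. The right-hand condition in Step 2 depends only on this set and on the constant $d$, so it is eventually constant in $k$. By Steps 1 and 2, $S_1\text{-}\!\depth(R/I^{k+1})=S_1\text{-}\!\depth(R/I^k)$ for all $k\gg0$.

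The graded case is identical, with homogeneous primes and the graded version of Theorem \ref{Thm:Serre}.

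\textbf{Main obstacle.} There is no serious one. The only point to check is that Theorem \ref{Thm:Serre} applies verbatim to the cyclic modules $M=R/I^k$. That theorem needs only that $R$ is a homomorphic image of a Gorenstein ring, which is exactly our hypothesis, so no passage to the completion is needed.
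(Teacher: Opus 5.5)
Your proposal is correct and follows essentially the same route as the paper. Both arguments combine Brodmann's stabilization of $\Ass(R/I^k)$ with Theorem \ref{Thm:Serre} for $r=1$, so the condition $S_1\text{-}\!\depth(R/I^k)=\dim(R/I^k)$ is eventually either always true or always false, and both use the depth hypothesis to squeeze $S_1\text{-}\!\depth(R/I^k)$ into $\{\dim(R/I^k)-1,\dim(R/I^k)\}$. Your only deviation is getting the constancy of $\dim(R/I^k)$ from $\sqrt{I^k}=\sqrt{I}$ instead of citing Brodmann, which is a harmless simplification.
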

    \begin{proof}
    	By Brodmann \cite{B79}, there exists $k_0>0$ such that $\Ass(R/I^k)=\Ass(R/I^{k_0})$ for all $k\ge k_0$. Hence, for all $k\ge k_0$, either all rings $R/I^{k}$ are equidimensional and satisfy $(S_1)$ (that is, they do not have embedded associated primes), or all rings $R/I^k$ fail at least one of these properties. Since $R$ is a homomorphic image of a Gorenstein ring, Theorem \ref{Thm:Serre} implies that either $S_1\text{-}\!\depth(R/I^k)=\dim(R/I^k)$ for all $k\ge k_0$, or $S_1\text{-}\!\depth(R/I^k)<\dim(R/I^k)$ for all $k\ge k_0$.
    	
    	By Brodmann's results \cite{B79,B79a}, the functions $k\mapsto\depth(R/I^k)$, $k\mapsto\dim(R/I^k)$ are eventually constant. By assumption, $\depth(R/I^k)=\dim(R/I^k)-1$ for all $k\gg0$. Proposition \ref{Prop:inequalities}(a) implies that $S_1\text{-}\!\depth(R/I^k)\in\{\dim(R/I^k)-1,\dim(R/I^k)\}$ for all $k\gg0$. Combining this with the first part of the argument, it follows that either $S_1\text{-}\!\depth(R/I^k)=\dim(R/I^k)$ for all $k\gg0$, or else $S_1\text{-}\!\depth(R/I^k)=\dim(R/I^k)-1$ for all $k\gg0$. The assertion follows.
    \end{proof}

    Summarizing what we know thus far, Theorems \ref{Thm:S_rDepth-Mon} and \ref{Thm:dim<=3}, Examples \ref{Ex:(i)-(vi)}(ii)-(iii) together with Brodmann classical result \cite{B79a}, and Propositions \ref{Prop:depth0} and \ref{Prop:S_1-depth-dim} imply:
    \begin{Corollary}\label{Cor:resume}
    	Conjecture \ref{ConjG} holds true in the following cases.
    	\begin{enumerate}
    		\item[\textup{(a)}] $R$ is a polynomial ring over a field $K$ and $I$ is a monomial ideal.
    		\item[\textup{(b)}] $R$ is Cohen-Macaulay domain with a $($graded, if $R$ is a $K$-algebra$)$ canonical module $\omega_R$ and $\dim(R)\le3$.
    		\item[\textup{(c)}] $R/I^k$ is $($sequentially$)$ Cohen-Macaulay for all $k\gg0$.
    		\item[\textup{(d)}] $\depth(R/I^k)=0$ for all $k\gg0$.
    		\item[\textup{(e)}] $R$ is a homomorphic image of a Gorenstein ring, $\depth(R/I^k)\!=\!\dim(R/I^k)\!-\!1$ for all $k\gg0$ and $r=1$.
    	\end{enumerate}
    \end{Corollary}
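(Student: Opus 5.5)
The plan is to go through the five cases one at a time. Each case is reduced either to a result already proved in this section or to Brodmann's classical stabilization results \cite{B79,B79a}, so no new machinery should be needed. Case (a) is exactly Theorem \ref{Thm:S_rDepth-Mon}. Case (b) is exactly Theorem \ref{Thm:dim<=3}. Case (e) is exactly Proposition \ref{Prop:S_1-depth-dim}.

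For case (d), I would apply Proposition \ref{Prop:depth0} to the finitely generated modules $M=R/I^k$. If $\depth(R/I^k)=0$ for all $k\ge k_0$, then $S_r\text{-}\!\depth(R/I^k)=0$ for all $r\ge1$ and all $k\ge k_0$. Hence the function $k\mapsto S_r\text{-}\!\depth(R/I^k)$ is eventually constant, equal to $0$.

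For case (c), suppose first that $R/I^k$ is Cohen-Macaulay for all $k\gg0$. Then Example \ref{Ex:(i)-(vi)}(ii), which relies on Proposition \ref{Prop:inequalities}, gives $S_r\text{-}\!\depth(R/I^k)=\depth(R/I^k)$ for all $r\ge1$. Brodmann's theorem \cite{B79a} says that $k\mapsto\depth(R/I^k)$ is eventually constant, and the claim follows. Now suppose instead that $R/I^k$ is only sequentially Cohen-Macaulay. Example \ref{Ex:(i)-(vi)}(iii) again gives $S_r\text{-}\!\depth(R/I^k)=\depth(R/I^k)$, and Brodmann's theorem concludes in the same way.

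The step I expect to be the main obstacle is the sequentially Cohen-Macaulay part of (c). Example \ref{Ex:(i)-(vi)}(iii) was stated under the standing hypothesis that $R$ is Cohen-Macaulay with a canonical module, since it uses Peskine's characterization and local duality. In the local case I would remove this hypothesis by passing to the completion. Theorem \ref{Thm:completion} shows that the Serre depths are unchanged, $\depth$ is also unchanged by \cite[Corollary 2.1.8]{BH}, and $\widehat{R/I^k}\cong\widehat{R}/I^k\widehat{R}=\widehat{R}/(I\widehat{R})^k$. I would then write $\widehat{R}$ as a quotient of a regular local ring $S$ and compute local cohomology over $S$, where the local duality argument of Example \ref{Ex:(i)-(vi)}(iii) applies verbatim. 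In the graded case I would work over the polynomial ring in the same way.

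The remaining technical point is that the sequentially Cohen-Macaulay property of $R/I^k$ must be preserved when passing to the completion and when changing the base ring. Preservation under change of base ring holds because the filtration and the dimensions are unchanged. Preservation under completion holds because completion is faithfully flat and preserves Cohen-Macaulayness and dimension of the filtration quotients. If this transfer turned out to be delicate, I would instead restrict case (c) to the setting of Example \ref{Ex:(i)-(vi)}(iii), exactly as it is stated there.
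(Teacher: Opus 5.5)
Your proposal is correct and follows the paper's own argument. That argument consists precisely of citing Theorem \ref{Thm:S_rDepth-Mon} for (a), Theorem \ref{Thm:dim<=3} for (b), Examples \ref{Ex:(i)-(vi)}(ii)--(iii) together with Brodmann's theorem \cite{B79a} for (c), Proposition \ref{Prop:depth0} for (d), and Proposition \ref{Prop:S_1-depth-dim} for (e), and your extra reduction in the sequentially Cohen--Macaulay case (completion, then a regular or polynomial ambient ring) is sound and addresses a point the paper leaves implicit, namely that Example \ref{Ex:(i)-(vi)}(iii) was only stated for $R$ Cohen--Macaulay with a canonical module.
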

    
    The following concept we discuss, called the \textit{depth strata}, can be defined for any ideal $I$ in a Noetherian local ring or standard graded $K$-algebra, in which case we assume $I$ is homogeneous. However, in the following discussions we will restrict our attention only to monomial ideals.\smallskip
    
    Let $I\subset S$ be a monomial ideal. Note that $\dim(S/I^k)=\dim(S/I)$ for all $k\ge1$. Let $d=\max\{1,\dim(S/I)\}$. By Theorem \ref{Thm:S_rDepth-Mon}, $S_r\text{-}\!\depth(R/I^{k+1})=S_r\text{-}\!\depth(R/I^k)$ for all $k\gg0$. Moreover, Propositions \ref{Prop:inequalities} and \ref{Prop:depth0} imply that
    $$
    S_1\text{-}\!\depth(R/I^k)\ \ge\ \cdots\ \ge\ S_d\text{-}\!\depth(R/I^k),
    $$
    and if $S_i\text{-}\!\depth(R/I^k)=0$ then $S_j\text{-}\!\depth(R/I^k)=0$ for all $j\ge1$.\smallskip
    
    We call the numerical function $f:\ZZ_{>0}\rightarrow\NN^d$ defined by
    $$
    f(k)\ =\ (S_1\text{-}\!\depth(S/I^k),\dots,S_d\text{-}\!\depth(S/I^k))
    $$
    the \textit{depth strata} of $I$.
    
    Let $f(k)=(a_{1,k},\dots,a_{d,k})$ for all $k\ge1$. Summarizing our discussion thus far, the function $f:\ZZ_{>0}\rightarrow\NN^d$ has the following three properties:
    \begin{enumerate}
    	\item[(i)] $a_{1,k}\ge\dots\ge a_{d,k}\ge0$ for all $k\ge1$,
    	\item[(ii)] if for some $k\ge1$, we have $a_{i,k}=0$, then $a_{j,k}=0$ for all $j\in[d]$, and
    	\item[(iii)] $a_{i,k+1}=a_{i,k}$ for all $i\in[n]$ and all $k\gg0$.
    \end{enumerate}
    
    In view of these facts, we ask the following
    \begin{Question}\label{Ques:strata}
    	Let $f:\ZZ_{>0}\rightarrow\NN^d$ be a numerical function satisfying the properties \textup{(i)-(ii)-(iii)}. Does there exist a monomial ideal $I$ in some polynomial ring $S$ whose depth strata coincides with $f$ ?
    \end{Question}

    In \cite{HNTT}, the authors proved that any convergent non-negative numerical function is actually the depth function of a monomial ideal, not merely a homogeneous ideal. We expect that Question \ref{Ques:strata} has a positive answer, even though at the moment it looks very hard to construct monomial ideals whose depth strata has strange behaviour.\bigskip
    
	\textbf{Acknowledgments.} A. Ficarra was supported by the Grant JDC2023-051705-I funded by MICIU/AEI/10.13039/501100011033 and by the FSE+ and also by INDAM (Istituto Nazionale Di Alta Matematica).

\bigskip
	
	\begin{thebibliography}{99}
		
		\bibitem{B79} M. Brodmann, \textit{Asymptotic stability of $\textup{Ass}(M/I^nM)$}, Proc. Am. Math. Soc., 74(1979), 16--18
		
		\bibitem{B79a} M. Brodmann, \textit{The asymptotic nature of the analytic spread}, Math. Proc. Cambridge Philos. Soc., {\bf 86} (1979), 35--39.
		
		\bibitem{BS} M. Brodmann, R. Sharp, {\it Local Cohomology: An Algebraic Introduction with Geometric Applications}, Cambridge Studies in Advanced Mathematics 136, 2nd Edition, Cambridge University Press, 2013.
		
		\bibitem{BH} W. Bruns and  J. Herzog, \textit{Cohen-Macaulay Rings}, revised ed., Cambridge Stud. Adv. Math., 39, Cambridge University Press, Cambridge, 1998
		
		\bibitem{Conca} A. Conca, \textit{A note on the v-invariant}, Proc. Amer. Math. Soc., 2024, 152(6), pp. 2349--2351
		
		\bibitem{CV}  A. Conca, M. Varbaro, \textit{Square-free Gröbner degenerations},  Invent. Math. {\bf 221} (2020), no. 3, 713--730.
		
		\bibitem{CHT} S.D. Cutkosky, J. Herzog, N.V. Trung. {\it Asymptotic Behaviour of the Castelnuovo--Mumford regularity}. Compositio Mathematica {\bf 118}, (1999), 243--261.
		
		\bibitem{DIV} F. D'Alessandro, B. Intrigila, S. Varricchio, \textit{Quasipolynomials, linear Diophantine equations, and semi-linear sets}, Theoret. Comput. Sci. 416 (2012), 1--16.
	    
	    \bibitem{DMV} H. Dao, L. Ma, M. Varbaro, \textit{Regularity, singularities and h-vectors of graded algebras},
	    Trans. Amer. Math. Soc. 377 (2024), no. 3, 2149--2167.
	    
	    \bibitem{DMMNW} H. Dao, E. Miller, J. Monta\~{n}o, C. O'Neill, K. Woods, \textit{Quasipolynomial behavior via constructibility in multigraded algebra}, (2025), preprint \url{https://arxiv.org/abs/2512.18536}.
	    
	    \bibitem{DM} H. Dao, J. Monta\~{n}o, \textit{Length of local cohomology of powers of ideals}, Trans. Amer. Math. Soc. 371 (2019), 3483--3503.
	    
		\bibitem{ES} S. Eilenberg, M.-P. Sch\"utzenberger, \textit{Rational sets in commutative monoids}, J. Algebra 13 (1969) 173--191.
		
		\bibitem{FS} A. Ficarra, E. Sgroi, \textit{Asymptotic behaviour of the $\v$-number of homogeneous ideals}, 2023, preprint \url{https://arxiv.org/abs/2306.14243}
		
		\bibitem{GDS} D.~R.~Grayson, M.~E.~Stillman. {\em Macaulay2, a software system for research in algebraic geometry}. Available at \url{http://www.math.uiuc.edu/Macaulay2}.
		
		\bibitem{EGAIV65} A.\ Grothendieck (with J.\ Dieudonn\'e), \emph{\'El\'ements de g\'eom\'etrie alg\'ebrique, IV.} \'Etude locale des sch\'emas et des morphismes de sch\'emas. (Seconde partie), Publ. I.H.E.S., No. {\bf 24}, 1965.
		
		\bibitem{HNTT} H.T. H\`a, H. Nguyen, N. Trung, T. Trung, \textit{Depth functions of powers of homogeneous ideals}, Proc. AMS, {\bf 149} (2021), 1837--1844.
		
		\bibitem{HRW} W. Heinzer, C. Rotthaus, S. Wiegand, \textit{Integral Domains Inside Noetherian Power Series Rings. Constructions and Examples}. Mathematical Surveys and Monographs 259, American Mathematical Society (AMS), Providence (2021)
		
		\bibitem{HH} J. Herzog, T. Hibi, \textit{The depth of powers of an ideal}, J. Algebra 291 (2005), no. 2, 534--550.
		
		\bibitem{HHBook} J.~Herzog, T.~Hibi, \emph{Monomial ideals}, Graduate texts in Mathematics {\bf 260}, Springer, 2011.
		
		\bibitem{HS} J. Herzog, E. Sbarra, \textit{Sequentially Cohen-Macaulay modules and local cohomology}, in Algebra, arithmetic and geometry, Part I, II (Mumbai, 2000), 327–340, Tata Inst. Fund. Res. Stud. Math., 16, Tata Inst. Fund. Res., Bombay.
		
		\bibitem{HSZ} J. Herzog, A. Soleyman Jahan, X. Zheng, \textit{Skeletons of monomial ideals}. Math. Nachr. 283 (2010), no. 10, 1403--1408.
		
		\bibitem{Hibi} T. Hibi, \textit{Quotient Algebras of Stanley–Reisner Rings and Local Cohomology}, J. Alg. bf 140, (1991), 336--343.
		
		\bibitem{Hoc} M. Hochster, \textit{Cohen-Macaulay rings, combinatorics, and simplicial complexes}, in Ring theory, II (Proc. Second Conf., Univ. Oklahoma, Norman, Okla., 1975), pp. 171--223, Lect. Notes Pure Appl. Math., Vol. 26, Dekker, New York-Basel, 1977.
		
		\bibitem{KMT} T. Kataoka, Y. Muta, N. Terai, \textit{The $\v$-numbers of Stanley–Reisner ideals from the viewpoint of Alexander dual complexes}, J. Algebra 684 (2025), 589–611.
		
		\bibitem{Kat} M. Katzman, \textit{An example of an infinite set of associated primes of a local cohomology module}, J. Algebra, 252 (2002), 161-166.
		
		\bibitem{KS} K. Khashyarmanesh, Sh. Salarian, \textit{Asymptotic stability of $\Att_R\Tor^R_1(R/\mathfrak{a}^n,A)$} Proc. Edinb. Math. Soc. (2) 44(3):479--483, (2001).
		
		\bibitem{Kod} V. Kodiyalam. {\it Asymptotic Behaviour of Castelnuovo-Mumford regularity}. Proc. Amer. Math. Soc. {\bf 198}(2), (1999), 407--411.
		
		\bibitem{MV} T. Marley, J. C. Vassilev, \textit{Cofiniteness and associated primes of local cohomology modules}, J. Algebra, 256 (2002), 180–193.
		
		\bibitem{Mat} H. Matsumura. \textit{Commutative Ring Theory}, Cambridge University Press, 1989.
		
		\bibitem{MT} Y. Muta, N. Terai, \textit{The Serre depth of Stanley-Reisner rings and the depth of their symbolic powers}, (2025), preprint \url{https://arxiv.org/abs/2509.14838}.
		
		\bibitem{PPTYa} M.R. Pournaki, M. Poursoltani, N. Terai, S. Yassemi, \textit{Simplicial complexes satisfying Serre’s condition versus the ones which are Cohen–Macaulay in a fixed codimension}, SIAM J. Discrete Math. 36 (2022), no. 4, 2506–2522
		
		\bibitem{PPTYb} M.R. Pournaki, M. Poursoltani, N. Terai, S. Yassemi, \textit{On the dimension of dual modules of local cohomology and the Serre’s condition for the unmixed Stanley–Reisner ideals of small height}, J. Algebra 632 (2023), 751–782.
		
		\bibitem{Rot} J. Rotman, \textit{Advanced Modern Algebra}, American Mathematical Soc., (Vol 114), 2010.
		
		\bibitem{Sb} E. Sbarra, \textit{Upper bounds for local cohomology for rings with given Hilbert function}, Comm. Algebra 29 (2001), no. 12, 5383--5409.
		
		\bibitem{Sc0} P. Schenzel, \textit{Zur lokalen Kohomologie des kanonischen Moduls}, Math. Z. 165, 223--230, 1979.
		
		\bibitem{Sc1} P. Schenzel, \textit{Dualisierende Komplexe in der lokalen Algebra und Buchsbaum-Ringe}, Lecture Notes in Math., vol. 907, Springer, Berlin, 1982.
		
		\bibitem{Sc2} P. Schenzel, \textit{On the use of local cohomology in algebra and geometry}. Six lectures on commutative algebra. Basel: Birkhäuser Basel, 1998. 241-292.
		
		\bibitem{Sc} P. Schenzel, \textit{On the dimension filtration and Cohen-Macaulay filtered modules}. In Commutative algebra and algebraic geometry (Ferrara), volume 206 of Lecture Notes in Pure and Appl. Math., pages 245–264. Dekker, New York, 1999.
		
		\bibitem{Smith} D. Smith, \textit{On the Cohen-Macaulay property in commutative algebra and simplicial topology}, Pacific J. Math. 141, (1990), 165--196.
		
		\bibitem{St} R. P. Stanley, \textit{Combinatorics and Commutative Algebra}, Birkh\"auser, 1983.
		
		\bibitem{T} Y. Takayama, \textit{Combinatorial characterizations of generalized Cohen-Macaulay monomial ideals}, Bull. Math. Soc. Sci. Math. Roumanie (N.S.), 48 (2005), 327--344.
		
		\bibitem{VHHK} N. Van Hoang, P. Huu Khanh, \textit{On the asymptotic stability of certain sets of prime ideals}, East West Math 14.1 (2012).
		
		\bibitem{V} W. Vasconcelos, \textit{Integral Closure: Rees Algebras, Multiplicities, Algorithms}, Springer Monographs in Mathematics, Springer-Verlag, Berlin (2005).
	\end{thebibliography}
\end{document}